\def\verbatim@font{\usefont{OT1}{cmtt}{m}{n}}
\newsavebox{\ChapterTitle}
\newcommand{\ps@spawn}{%
    \renewcommand{\@oddhead}{{\usebox{\ChapterTitle}\hfil\thepage}\hspace{-13.83cm}\rule[-8pt]{13.83cm}{0.5pt}}%
    \renewcommand{\@evenhead}{{\thepage\hfil {\selectlanguage{english} Chapter} \thechapter}\hspace{-13.83cm}\rule[-8pt]{13.83cm}{0.5pt}}%
    \renewcommand{\@evenfoot}{}%
    \renewcommand{\@oddfoot}{\@evenfoot}}
\definecolor{mygray}{gray}{0.9}
\newcommand{\ci}[1]{_{_{\scriptstyle #1}}}
\newcommand{\ti}[1]{_{\scriptstyle \text{\rm #1}}}
\newcommand{\ut}[1]{^{\scriptstyle \text{\rm #1}}}
\newtheorem{thm}{Theorem}[section]
\newtheorem{prop}[thm]{Proposition}
\newtheorem{cor}[thm]{Corollary}
\newtheorem{lm}[thm]{Lemma}
\theoremstyle{remark}
\newtheorem{rem}[thm]{Remark}
\newtheorem*{rem*}{Remark}
\theoremstyle{definition}
\newtheorem*{df*}{Definition}
\numberwithin{equation}{section}
\newcommand{\cz}{Calder\'{o}n--Zygmund\ }
\newcommand{\R}{\mathbb{R}}
\newcommand{\C}{\mathbb{C}}
\newcommand{\E}{\mathbb{E}}
\newcommand{\cD}{\mathcal{D}}
\newcommand{\cF}{\mathcal{F}}
\newcommand{\cS}{\mathscr{S}}
\newcommand{\cT}{\mathscr{T}}
\newcommand{\cR}{\mathscr{R}}
\newcommand{\cE}{\mathscr{E}}
\newcommand{\La}{\langle}
\newcommand{\Ra}{\rangle}
\newcommand{\im}{\operatorname{Im}}
\newcommand{\re}{\operatorname{Re}}
\newcommand{\ch}{\operatorname{ch}}
\newcommand{\wt}{\widetilde}
\newcommand{\e}{\varepsilon}
\newcommand{\bff}{\mathbf f}
\newcommand{\bg}{\mathbf g}
\newcommand{\ssDelta}{{\scriptscriptstyle{\Delta}}}
\newcommand{\1}{\mathbf{1}}
\newcommand{\ddoto}{\"{o}}
\newcommand{\ddotu}{\"{u}}
\newenvironment{entry}
{\begin{list}{X}%
		{%
			\setlength{\labelwidth}{55pt}%
			\setlength{\leftmargin}{\labelwidth}
			\addtolength{\leftmargin}{\labelsep}%
			\setlength{\itemsep}{.4pc}
	}%
}%
{\end{list}}     
\begin{document}

\title[Estimates with ``smooth'' weights]{``Small step'' remodeling and counterexamples for weighted estimates with arbitrarily ``smooth'' weights}
\author{S.~Kakaroumpas} 
\address{S.~Kakaroumpas: Department of Mathematics \\ Brown University \\ Providence, RI 02912 \\ USA}
\curraddr{Institut f\"{u}r Mathematik\\ Julius-Maximilians-Universit\"{a}t W\"{u}rzburg\\97074 W\ddotu rzburg\\ Germany}
\email{spyridon.kakaroumpas@mathematik.uni-wuerzburg.de}
\author{S.~Treil}
\thanks{Supported in part by the National Science Foundation under the grants  DMS-1600139, DMS-1856719.} 
\address{S.~Treil: Department of Mathematics \\ Brown University \\ Providence, RI 02912 \\ USA}
\email{treil@math.brown.edu}
\date{}

\begin{abstract}
For an $A_p$ weight $w$ the norm of the Hilbert Transform in $L^p(w)$, $1<p<\infty$ is estimated by $[w]\ci{A_p}^s$, where $[w]\ci{A_p}$ is the $A_p$ characteristic of the weight $w$ and $s = \max(1,1/(p-1))$; as simple examples with power weights show, these estimates are sharp. 

A natural question to ask, is  whether it is possible to improve the exponent $s$ in the above estimate if one replaces the $A_p$ characteristic by its ``fattened'' version, where the averages are replaced by Poisson-like averages. For power weights (for example with $p=2$ and Poisson averages) one can see that there is indeed an improvement in the exponent: but is it true for general weights?

In this paper we show that the optimal exponent $s$ remains the same by constructing counterexamples for arbitrarily ``smooth'' weights (in the sense that the doubling constant is arbitrarily close to $2$), so the ``fattened'' $A_p$ characteristic is equivalent to the classical one, and such that $\|T\|\ci{L^p(w)} \sim [w]\ci{A_p}^s$. 

 We use the ideas from the unpublished manuscript by F. Nazarov disproving Sarason's conjecture. We start from simple classical  counterexamples for dyadic models, and then by using what we call ``small step construction'' we transform them into examples with weights that are arbitrarily dyadically smooth. F.~Nazarov had used Bellman function method to prove the existence of such examples, but our construction gives a way to get such examples from the standard dyadic ones. We then use a modification of ``remodeling'', introduced by J.~Bourgain and developed by F.~Nazarov, to get from examples for dyadic models to examples for the Hilbert transform. 

As an added bonus, we present a proof that the $L^p$ analog of Sarason's conjecture is false  for all $p$, $1<p<\infty$.
\end{abstract}

\maketitle
\setcounter{tocdepth}{1}

\tableofcontents
\setcounter{tocdepth}{2}

\pagebreak
\section*{Notation}

\begin{entry}
\item[$\1\ci{E}$] characteristic function of set $E$;
\item[$dx$] integration with respect to Lebesgue measure; 
\item[$|E|$] $d$-dimensional Lebesgue measure of a measurable set $E\subseteq\R^d$;
\item[$\La f\Ra\ci{E}$] average with respect to Lebesgue measure, $\La f\Ra\ci{E}:=\frac{1}{|E|}\int_{E}f(x)dx$;
\item[$L^{p}(w)$] weighted Lebesgues space, $\|f\|\ci{L^p(w)}^p := \int_{\R^d}|f(x)|^p w(x) dx$; 
\item[$\La f,g\Ra$] linear duality, $\La f,g\Ra =\int f(x) g(x) dx$;
\item[$w(I)$] Lebesgue integral of a weight $w$ over $I$, $w(I):=\int_{I}w(x)dx =\La w\Ra\ci I |I|$; 
\item[$p'$] H\"{o}lder conjugate exponent to $p$, $1/p+1/p'=1$; 
\item[$\cD$] family of all dyadic intervals in $\R$, or of all dyadic subintervals of $[0,1)$; 
\item[$\cD(I)$] family of all dyadic subintervals of a dyadic interval $I$, including $I$ itself; 
\item[$\ch(I)$] family of all dyadic children   of the dyadic interval $I$;
\item[$\ch^k(I)$] family of all dyadic descendants of order $k$  of the dyadic interval $I$, note that $\ch(I) =\ch^1(I)$; 
\item[$\ch^k(\cS)$] for a family $\cS$ of dyadic intervals  the collection $\ch^k(\cS)$ is defined as $\ch^k(\cS):= \bigcup_{I\in\cS} \ch^k(I)$, and $\ch(\cS) = \ch^1(\cS)$;
\item[$I_{-},\,I_{+}$] left, respectively right half of interval $I$; 
\item[$h\ci I$] $L^{\infty}$-normalized Haar function for interval $I$, $h\ci{I}:=\1\ci{I_{+}}-\1\ci{I_{-}}$ (note the non-standard normalization!); 
\item[$\Delta\ci{I}$] martingale difference operator,  
 $\displaystyle\Delta\ci{I}f:=\sum_{I'\in\ch(I)}\La f\Ra\ci{I'}\1\ci{I'}-\La f\Ra\ci{I}\1\ci{I}$; 
\item[$\ssDelta\ci{I}f$] difference of averages, $\ssDelta\ci{I}f := \La f\Ra\ci{I_+} - \La f\Ra\ci{I} = \left(\La f\Ra\ci{I_+} - \La f\Ra\ci{I_-}\right)/2 = \La fh\ci I\Ra\ci I$;
\end{entry}

Notation $x\lesssim  y$ means  $x\le Cy$ with an absolute constant $C<\infty$, and  $x\lesssim\ci{a,b,\ldots}y$ means that $C$ depends \emph{only} on $a, b, \ldots$; the notation $x\gtrsim y$ means $y\lesssim x$, and similarly for $x\gtrsim\ci{a,b,\ldots} y$.  We use  $x\sim y$ if \emph{both} $x\lesssim y$ and $x\gtrsim y$ hold, and   
$x\sim\ci{a,b\ldots} y$ is defined similarly. 

\section{Introduction}
\label{s:intro}

This paper deals with sharp weighted estimates for classical operators in harmonic analysis. Our starting point  is the famous Hunt--Muckenhoupt--Wheeden theorem \cite{hunt-muckenhoupt-wheeden}, which says that the so-called Muckenhoupt $A_{p}$ condition
\begin{equation}
\label{A_p}
\sup_{I}\left(\frac{1}{|I|}\int_{I}w(x)dx\right)\left(\frac{1}{|I|}\int_{I}w(x)^{-1/(p-1)}dx\right)^{p-1}=:[w]\ci{A_{p}}<\infty,
\end{equation}
(the supremum is taken over all intervals $I\subset\R$)  
is necessary and sufficient for the Hilbert transform $H$, 
\begin{equation*}
Hf(x):=\text{p.v.}\int_{\R}\frac{f(y)}{x-y}dy,
\end{equation*}
to be a bounded operator on the weighted space $L^{p}(w)$ ($1<p<\infty$). 

It is also well-known that condition \eqref{A_p} (with intervals replaced by cubes) is sufficient for the boundedness on weighted spaces of all \cz operators in any number of dimensions, and it is also necessary for the boundedness on weighted spaces of ``large" \cz operators, like the Riesz transforms. 

\begin{rem*}
Recall that by a weight  people usually understand a locally integrable non-negative function $w$, but to define $A_p$ characteristic $[w]\ci{A_p}$ in \eqref{A_p} one needs to assume that $w$ is positive a.e.  

However, if we interpret $1/0$ as $+\infty$, then for a non-trivial weight $w$ vanishing on a set of positive measure  we have $[w]\ci{A_p}=\infty$, so the condition \eqref{A_p}  fails for $w$. And it is easy to see that if a weight $w$ vanishes on a set of positive measure, the Hilbert transform is not well defined on $L^p(w)$ (take a function $f$ supported on the set where $w$ vanishes), so we can treat the Hilbert transform as unbounded in this case. 

So, one can say that the Hunt--Muckenhoupt--Wheeden theorem holds for arbitrary non-negative weights, if everything is interpreted the right way. However, to avoid confusing the reader with irrelevant technical details, we assume in this paper that a weight is always locally integrable a.e.~positive function. 
\end{rem*}

\subsection{Sharp estimates}
Qualitative results like the one mentioned above are usually easier to prove than quantitative counterparts. In fact, it had been an open problem for some time to find a sharp estimate of the norm of $H$ (and other \cz operators) over $L^p(w)$  in terms of the powers of the $A_p$ characteristic $[w]\ci{A_p}$ defined in \eqref{A_p} above. It was proved by S.~Petermichl in \cite{petermichl} that $\|H\|\ci{L^2(w)} \lesssim [w]\ci{A_2}$. She then proved the same estimate for the Riesz Transform, and after some results by different authors gradually expanding the class of operators for which such an estimate holds, the linear estimate $\| T\|\ci{L^2(w)}\lesssim\ci{T,d} [w]\ci{A_2}$ was established by T.~Hyt\"{o}nen \cite{hytonen} (where $d$ is the dimension of the underlying Euclidean space).

Using the method Rubio De Francia extrapolation (see e.g. \cite{extrapolation}), one then can show that for $p>2$ the estimate $\|T\|\ci{L^p(w)} \lesssim\ci{T,p,d} [w]\ci{A_p}$ holds; by duality one finally gets the estimate  $\|T\|\ci{L^p(w)} \lesssim\ci{T,p,d} [w]\ci{A_p}^{1/(p-1)}$ for $1<p<2$. 
Thus for $1<p<\infty$ one can write 
\begin{align}
\label{e:sharp 01}
\|T\|\ci{L^p(w)} \lesssim\ci{T, p, d} [w]\ci{A_p}^s, 
\end{align}
where
\begin{equation}
\label{charexp}
s= \max\lbrace 1, 1/(p-1))\rbrace.
\end{equation}

Note, that for the Hilbert transform the above estimate \eqref{e:sharp 01} is sharp. Namely,  even before the upper bound for the Hilbert transform was proved by S.~Petermichl \cite{petermichl}, it had already been shown
by S.~Buckley \cite{Buckley} that 
given $p\in(1,\infty)$ one can find  $A_p$ weights $w$ with arbitrarily large $[w]\ci{A_p}$ for which $\|H\|\ci{L^p(w)} \gtrsim_{p} [w]^{s}\ci{A_p}$ (with $s$ is given by \eqref{charexp}). It is also not hard to show that the estimate \eqref{e:sharp 01} is sharp for the Riesz transforms.

\subsection{Considering ``larger'' characteristics} A reasonable attempt to lower the optimal exponent $s$ given by \eqref{charexp} might involve considering ``larger'' variants of $A_p$ characteristics where weights are not averaged over intervals (or cubes) as in \eqref{A_p}, but rather integrated against kernels with slower decay. Such characteristics arise in fact naturally in many problems not directly related to sharp weighted estimates.

For instance, it was proved by the second author and A. Volberg in \cite{stoch} for $p=2$, and by  F. Nazarov and the second author in \cite{bellman} for general $p$, that the following ``fattened" $A_{p}$ condition%
\footnote{In fact, in both \cite{stoch} and \cite{bellman} the estimates with matrix-valued weights were considered, and the Hunt--Muckenhoupt--Wheeden theorem for the matrix-valued weights was obtained.  A matrix-valued analogue of the condition $[w]\ut{fat}\ci{A_p}<\infty$ was introduced there, and its necessity was proved. The necessity of the scalar condition follows immediately from the matrix-result, although just following the proofs from \cite{stoch},  \cite{bellman} and not bothering with the non-commutativity of the matrix-valued case gives a very simple proof for the scalar situation.}
 $A_p\ut{fat}$,  $[w]\ut{fat}\ci{A_p}<\infty$, where
\begin{equation}
\label{fat-A_p}
[w]\ut{fat}\ci{A_p}:=\sup_{\lambda\in\C_+}\left(\int_{\R }\frac{(\im(\lambda))^{p-1}}{|x-\lambda|^{p}}w(x)dx\right)\left(\int_{\R }\frac{(\im(\lambda))^{p'-1}}{|x-\lambda|^{p'}}w(x)^{-1/(p-1)}dx\right)^{p-1},
\end{equation}
is necessary for the boundedness of the Hilbert transform on the weighted space $L^{p}(w)$; here, $p'$ denotes the H\"{o}lder conjugate of $p$, $1/p+1/p'=1$. Note that for $p=2$, the integrals in (1.2) are just Poisson extensions of the weights $w$ and $w^{-1}$ (up to multiplicative constants), so one can think of $[w]\ut{fat}\ci{A_p}$ as a ``Poisson-like'' $A_p$ condition for any $1<p<\infty$. Motivation for considering such ``Poisson-like'' $A_p$ conditions stems from the theory of Toeplitz operators, see for example \cite[s.~7.9]{ProbBook3_v1}.

It is easy to see that $[w]\ci{A_{p}}\lesssim_{p}[w]\ci{A_{p}}\ut{fat}$. Since the $A_{p}$ condition is already sufficient for the boundedness of $H$ on $L^{p}(w)$, it follows that the $A_{p}$ condition and the ``fattened" $A_{p}$ condition $A_p\ut{fat}$ are equivalent. However, simple examples involving power weights show that for every fixed $p$, the two characteristics themselves are not equivalent: for any fixed $1<p<\infty$, one can find $A_{p}$ weights $w$ with arbitrarily large quotient $[w]\ci{A_{p}}\ut{fat}/[w]\ci{A_{p}}$. 
Moreover, it was shown in \cite{stoch} that for $p=2$ the lower bound for the Hilbert Transform
\[
\|H\|\ci{L^2(w)}\gtrsim \left([w]\ci{A_2}\ut{fat}\right)^{1/2}
\] 
holds for \emph{all} weights $w$.  

So one could hope that  a better estimate of the norm $\|T\|\ci{L^p(w)}$, and in particular of the norm $\|H\|\ci{L^p(w)}$, in terms of the ``fattened'' $A_p$ characteristic $[w]\ci{A_p}\ut{fat}$ in \eqref{fat-A_p} is possible. One could even hope, for example, that the estimate $\|H\|\ci{L^2(w)}\lesssim \left([w]\ci{A_2}\ut{fat}\right)^{1/2}$ holds. The main result of this paper destroys all such  hopes: we show that for the Hilbert transform $H$ there exist $A_p$ weights $w$ with arbitrarily large $A_p$ characteristic $[w]\ci{A_p}\ut{fat}$, such that $\|H\|\ci{L^p(w)}\gtrsim_{p} \left( [w]\ci{A_p}\ut{fat} \right)^s$, where $s$ is given by \eqref{charexp}.

\subsubsection{``Heat'' $A_p$ characteristics} In many problems it is natural to consider other kernels besides ``Poisson-like'' ones. For example, 
S.~Petermichl and A.~Volberg \cite{heating} considered a ``heat'' $A_p$ characteristic ($1<p<\infty$) given by
\begin{equation}
\label{heat-A_p}
[w]\ut{heat}\ci{A_p}:=\sup_{\substack{y\in\R^d\\t\in(0,\infty)}}\left(\int_{\R^d }\frac{1}{t^{d/2}}e^{-|x-y|^2/t}w(x)dx\right)\left(\int_{\R^d }\frac{1}{t^{d/2}}e^{-|x-y|^{2}/t}w(x)^{-1/(p-1)}dx\right)^{p-1}.
\end{equation}
It was shown in  \cite{heating}  that in sharp contrast to the ``Poisson-like'' case, the ``heat" $A_p$ characteristic in \eqref{heat-A_p} is essentially the same as the usual Muckenhoupt $A_p$ characteristic in \eqref{A_p}, more precisely
\begin{equation}
\label{usual heat comparability}
[w]\ci{A_p}\sim\ci{d,p}[w]\ci{A_p}\ut{heat}\qquad (1<p<\infty).
\end{equation}
This fact for $d=2$ was used \cite{heating}  to establish sharp weighted estimates for the Ahlfors--Beurling operator, which allowed the authors   to deduce that weakly quasiregular maps on the plane are quasiregular. 

In view of \eqref{usual heat comparability} the problems considered in this paper are trivial for the ``heat'' $A_p$ characteristic. 

\subsection{Weights and doubling constants}
\label{s:doubling}

For a weight $w$ on $\R$ we define its doubling constant $D_w$ as 
\begin{align*}
D_w:= \sup_I w(2I)/w(I), 
\end{align*}
where the supremum is taken over all intervals $I$ in $\R$. Here $2I$ is the interval with the same center as $I$ of length $2|I|$, and slightly abusing the notation we write $w(I)$ for $\int_I w dx$. 

It is easy to show that if the doubling constant of the weight $w$ is bounded by $2+\delta$ for sufficiently small $\delta$, then we have uniformly over all $\lambda\in\C_{+}$ the estimate
\begin{align}
\label{Poiss le avg}
\int_{\R }\frac{(\im(\lambda))^{p-1}}{|x-\lambda|^{p}}w(x)dx \lesssim_{p} |I_\lambda|^{-1}\int_{I_\lambda} w(x)dx,
\end{align}
where $I_\lambda$ is the interval $[\re(\lambda) -\im(\lambda), \re(\lambda) +\im(\lambda) ]$.  We emphasize that the particular function $(\im \lambda)^{p-1}/|x-\lambda|^{p}$ in the left-hand side of \eqref{Poiss le avg} is of no importance here; any ``reasonable'' approximate identity on the real line can be used in its place.

Thus, if the doubling constants  of the weights $w$ and $\sigma=w^{-1/(p-1)}$ are bounded by $2+\delta$ for sufficiently small $\delta$, then the $A_p$ characteristics $[w]\ci{A_p}$ and $[w]\ci{A_p}\ut{fat}$ are equivalent in the sense of two sided estimate.

\subsection{Main results}
The  main result of this paper is the following theorem.
\begin{thm}\label{t:main res}
Given $p\in(1,\infty)$,  $M>2$ and arbitrarily small $\delta>0$, there exists an $A_p$ weight $w$ on $\R$ with $M\leq [w]_{\ci{A_{p}}}\leq C(p)M$, such that the doubling constants of the weights $w$ and $\sigma=w^{-1/(p-1)}$ are bounded by $2+\delta$ and
\begin{equation*}
\|H\|\ci{L^p(w)}\ge c(p) M^s, \qquad s=\max\lbrace1,{1}/{p-1}\rbrace. 
\end{equation*}
\end{thm}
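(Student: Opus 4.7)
The plan is to follow the three-step strategy announced in the abstract. By duality one has $\|H\|_{L^p(w)} = \|H\|_{L^{p'}(\sigma)}$ with $\sigma = w^{-1/(p-1)}$, $p' = p/(p-1)$, which interchanges the two cases $s=1$ and $s=1/(p-1)$ and preserves the doubling hypotheses together with the $A_p$ characteristic up to taking $(p-1)$-th powers, so one may reduce to the range $p\ge 2$, $s=1$.

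First, I would reproduce a Buckley-type sharp example for a dyadic model operator, say the martingale transform or a dyadic shift on $[0,1)$, producing a weight $w_0$ with $[w_0]\ci{A_p}\sim M$ together with a pair of test functions witnessing the dyadic lower bound of order $M^s$. Typically $w_0$ is a power weight, so on each side of the singular point consecutive dyadic averages $\La w_0\Ra_I/\La w_0\Ra_{I'}$, $I'\in\ch(I)$, are bounded away from $1$, and in particular the doubling constant $D_{w_0}$ is far from $2$.

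The heart of the construction is the \emph{small step} transformation, which replaces $w_0$ by a dyadically smooth weight $w_1$ with $D_{w_1}\le 2+\delta$, while preserving (up to $p$-dependent constants) both the $A_p$ characteristic and the dyadic lower bound for the operator. The idea is that whenever two consecutive averages $\La w_0\Ra\ci I$ and $\La w_0\Ra\ci{I'}$, $I'\in\ch(I)$, differ by a factor $\rho>1+\delta/2$, one interpolates between them through $N=N(\rho,\delta)$ inserted dyadic generations with relative jump at most $1+\delta/2$; the mass and anti-mass of $w_0$ and $\sigma_0 = w_0^{-1/(p-1)}$ are simultaneously redistributed across these inserted layers so that the Haar coefficients of the dyadic test functions survive with comparable magnitudes, and the bilinear form for the dyadic operator remains of the correct order. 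The crucial (and most delicate) point is that the interpolation must be performed simultaneously for $w$ and $\sigma$, since otherwise the $A_p$ characteristic could blow up; this is precisely where the Bellman-function approach of Nazarov is replaced by an explicit combinatorial construction.

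Finally, to pass from the dyadic example to the Hilbert transform I would use Nazarov's modification of Bourgain's \emph{remodeling}: each dyadic interval $I$ is replaced by a long arithmetic string of far-apart rescaled copies, so that on the remodeled test function the Hilbert transform reproduces, up to a uniform constant, the action of the dyadic model on the original test function. The local Haar structure of $w_1$ is transferred to the remodeled weight $w$ with the same relative sizes, whence $[w]\ci{A_p}\lesssim\ci{p} [w_1]\ci{A_p}\lesssim\ci p M$, and, because the rescaled copies inherit the dyadic smoothness of $w_1$ and are separated by long constant stretches, $D_w\le 2+\delta$ (this being the reason the remodeling is applied \emph{after}, not before, the small-step step). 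Reading off the lower bound from the dyadic one then yields $\|H\|\ci{L^p(w)}\gtrsim\ci p M^s$. The main obstacle is the second step: engineering the small-step interpolation so that both $w$ and $\sigma$ become dyadically smooth while the sharp Buckley lower bound survives the smoothing, with all estimates uniform as $\delta\to 0$ and $M\to\infty$ independently.
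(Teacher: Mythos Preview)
Your three-step outline matches the paper's strategy, but there is a genuine gap in the remodeling step. Nazarov's remodeling, as you invoke it, achieves strong dyadic smoothness by placing the \emph{average} of the weight on the two ``endpoint'' intervals of each periodisation block and then discarding those intervals from further processing. Precisely because of this averaging, the resulting map is \emph{not} composition with a measure-preserving transformation, and the pointwise relation $\sigma = w^{-1/(p-1)}$ is destroyed: the output is a genuine two-weight pair $(\wt w,\wt\sigma)$ with $\wt w\,\wt\sigma^{\,p-1}\ne 1$ on a set of positive measure. Since the theorem asks for a single $A_p$ weight, this is fatal. The paper's fix is \emph{iterated remodeling}: rather than freezing the endpoint intervals at the average, one recursively applies the same periodisation procedure inside them (and inside the new endpoint intervals that arise, ad infinitum). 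In the limit every point is eventually covered by a regular stopping interval, the whole construction becomes composition with a single measure-preserving map $\Psi$, and $\wt w\,\wt\sigma^{\,p-1} = (w\sigma^{p-1})\circ\Psi = 1$ is preserved automatically. This iteration is the new idea needed on top of Nazarov's construction, and your proposal does not contain it.

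A second, smaller point: to transfer the lower bound to the Hilbert transform the paper does not use the martingale transform but the specific shift $Tf = 2\sum_I (\ssDelta\ci I f)(h\ci{I_+}-h\ci{I_-})$, chosen because one can compute that $\langle H(h\ci I),h\ci{I_+}\rangle$ has a definite sign and the off-diagonal pairings $\langle H(h\ci I),h\ci{J_+}\rangle + \langle H(h\ci J),h\ci{I_+}\rangle$ (for disjoint $I,J$ of equal length) share that sign. This choice of shift forces the small-step construction to be carried out on a two-dimensional ``triangle'' (simultaneously tracking $\ssDelta\ci I$ and $\ssDelta\ci{I_+}$), and the iterated remodeling to move two generations at a time; your segment-based interpolation picture corresponds to the Haar multiplier case and would not by itself produce the Hilbert-transform lower bound.
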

By the above discussion about the equivalence of $A_p$ characteristics $[w]\ci{A_p}$ and $[w]\ci{A_p}\ut{fat}$, we can see that   Theorem \ref{t:main res} implies the following corollary.
\begin{cor}
\label{c:linear lower}
Given $p\in(1,\infty)$,  $M>2$, there exists a weight $w$ on $\R $ with $M\le [w]\ci{A_{p}}\ut{fat}\le C(p) M$, such that
\begin{equation*}
\Vert H\Vert \ci{L^{p}(w)}\ge c(p)M^{s},\qquad s=\max\lbrace 1,1/(p-1)\rbrace.
\end{equation*}
\end{cor}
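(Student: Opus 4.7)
The plan is to derive Corollary \ref{c:linear lower} as a direct consequence of Theorem \ref{t:main res} via the last paragraph of Section \ref{s:doubling}: once the doubling constants of $w$ and $\sigma=w^{-1/(p-1)}$ are forced sufficiently close to $2$, the classical and ``fattened'' $A_p$ characteristics become comparable up to a $p$-dependent constant. Concretely, I would first fix $p\in(1,\infty)$ and pick $\delta=\delta(p)>0$ small enough to guarantee estimate \eqref{Poiss le avg} for any $(2+\delta)$-doubling weight. The verification of \eqref{Poiss le avg} is a standard annular decomposition: on the $k$-th annulus $2^{k+1}I_\lambda\setminus 2^{k}I_\lambda$ the Poisson-like kernel $(\im\lambda)^{p-1}/|x-\lambda|^{p}$ is pointwise bounded by $C_p\,2^{-kp}/|I_\lambda|$, while $(2+\delta)$-doubling yields $w(2^{k+1}I_\lambda)\le(2+\delta)^{k+1}w(I_\lambda)$; summing produces a convergent geometric series exactly when $(2+\delta)/2^{p}<1$, which admits a positive $\delta$ since $p>1$.

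Applying \eqref{Poiss le avg} to both $w$ and $\sigma$ in the two factors defining \eqref{fat-A_p} gives $[w]\ci{A_p}\ut{fat}\lesssim_p[w]\ci{A_p}$. The reverse inequality $[w]\ci{A_p}\lesssim_p[w]\ci{A_p}\ut{fat}$ holds for every weight and is an immediate calculation: for any interval $I$, take $\lambda$ with $\re\lambda$ equal to the center of $I$ and $\im\lambda=|I|/2$; then the Poisson-like kernel dominates a $p$-dependent multiple of $|I|^{-1}\1\ci I$, which recovers the average appearing in \eqref{A_p}. Combining the two directions, $[w]\ci{A_p}\ut{fat}\sim_p[w]\ci{A_p}$ under the doubling assumption.

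With this equivalence in hand, I would apply Theorem \ref{t:main res} with the chosen $\delta$ and with $M$ replaced by $M/c_1(p)$, where $c_1(p)$ is the implicit constant in $[w]\ci{A_p}\ut{fat}\ge c_1(p)[w]\ci{A_p}$. The resulting weight $w$ satisfies $M\le[w]\ci{A_p}\ut{fat}\le C(p)M$ together with $\|H\|\ci{L^p(w)}\ge c(p)M^s$, as required; the rescaling only perturbs $c(p)$ and $C(p)$ by $p$-dependent factors, and for $M$ below the natural threshold $2c_1(p)$ the conclusion is trivial upon adjusting constants. There is no real obstacle here: the entire content of Corollary \ref{c:linear lower} is already baked into the doubling clause of Theorem \ref{t:main res}, and the remaining verification is essentially routine.
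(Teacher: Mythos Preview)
Your proposal is correct and follows exactly the approach the paper indicates: the paper derives Corollary~\ref{c:linear lower} from Theorem~\ref{t:main res} via the discussion in Section~\ref{s:doubling}, and you have simply spelled out the annular-decomposition argument behind \eqref{Poiss le avg} that the paper leaves as ``easy to show.'' There is nothing to add.
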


\subsubsection{Two weight estimates and Sarason's conjecture}

One of the main technical tools used in this paper is inspired by the unpublished manuscript \cite{Nazarov} by F.~Nazarov, where he provided a counterexample to the so-called Sarason's conjecture. Let us briefly recall this conjecture. 

It is  natural  to consider \textit{two-weight} estimates for the Hilbert transform and other \cz operators, i.e.~to ask when they are bounded operators from $L^p(v)$ to $L^p(w)$ for potentially \textit{different} weights $v,w$. It is easy to show that the two weight $A_p$ condition
\begin{align}
\label{2w-A_p}
\sup_{I}\left(\frac{1}{|I|}\int_{I}w(x)dx\right)\left(\frac{1}{|I|}\int_{I}v(x)^{-1/(p-1)}dx\right)^{p-1}=:[w,v^{-1/(p-1)}]\ci{A_{p}}<\infty,
\end{align}
is necessary for the  Hilbert transform to be a bounded operator from $L^p(v)$ to $L^p(w)$ ($1<p<\infty$). However, as simple examples show, this condition is not sufficient (for the reader's convenience we supply an example in Subsection \ref{s: counter two-weight Muck} in the Appendix).

It had been shown long ago by the second author that the following ``fattened'' two weight $A_p$ condition 
\begin{equation}
\label{2w-A_p-fat}
\sup_{\lambda\in\mathbb{C}_{+}}\left(\int_{\R }\frac{(\text{Im}(\lambda))^{p-1}}{|x-\lambda|^{p}}w(x)dx\right)\left(\int_{\R }\frac{(\text{Im}(\lambda))^{p'-1}}{|x-\lambda|^{p'}}v(x)^{-1/(p-1)}dx\right)^{p-1}<\infty
\end{equation}
is also necessary for the Hilbert transform to act boundedly  from $L^p(v)$ to $L^p(w)$.  Note, that unlike the one-weight case, the two-weight conditions \eqref{2w-A_p} and \eqref{2w-A_p-fat} are not equivalent; simple examples can be easily constructed.

The Poisson averages are less localized than the averages over intervals, so D.~Sarason hoped that for $p=2$ the two weight Poisson $A_2$ condition \eqref{2w-A_p-fat} would capture correctly the ``far'' action of the Hilbert transform.  In \cite[s.~7.9]{ProbBook3_v1} he conjectured that (for $p=2$) the  Poisson $A_2$ condition \eqref{2w-A_p-fat}  is necessary and sufficient  for the Hilbert transform to be a bounded operator from $L^{2}(v)$ to $L^{2}(w)$.%
\footnote{It is interesting that when D.~Sarason was stating his conjecture he was not aware of the necessity of the two weight Poisson $A_2$ condition. The proof of necessity was presented to him by the second author, and this is exactly the proof presented (with attribution) in \cite[s.~7.9]{ProbBook3_v1}. 

The problem in   \cite[s.~7.9]{ProbBook3_v1} was stated a bit different, but it was equivalent to the two weight estimate for the Hilbert transform. The proof of necessity was presented there only for $p=2$, but the same proof works for all $p$, $1<p<\infty$.} 

This conjecture was disproved by F.~Nazarov in \cite{Nazarov}. In this paper we extend Nazarov's result to all $p\in(1,\infty)$ (not just $p=2$). While our proof relies heavily on the machinery developed in \cite{Nazarov}, we introduce some crucial new ideas, allowing us to treat the case of $p\neq 2$. We should also mention that our counterexample is a ``constructive'' one; unlike \cite{Nazarov} we are not using the Bellman function method. 

We prove the following theorem:

\begin{thm}\label{t:dispro sar}
Given $p\in(1,\infty)$, there exist weights $w,v$ on $\R $ satisfying \eqref{2w-A_p-fat}, such that the Hilbert transform is not a bounded operator acting from $L^p(v)$ to $L^p(w)$. In particular, this means than there exists $f\in L^p(v)$ such that $\|Hf\|\ci{L^{p}(w)}=\infty$.
\end{thm}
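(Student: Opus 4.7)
The strategy I would follow is the two-step program announced in the introduction: first construct a dyadic counterexample, then transfer it to the Hilbert transform via remodeling. On the dyadic side, I would build a pair of weights $(w_d,v_d)$ on $\R$ for which the ordinary dyadic two weight $A_p$ condition holds but a dyadic model operator — the martingale transform or a Haar shift — fails to act boundedly from $L^p(v_d)$ to $L^p(w_d)$. Such examples are classical in flavor: on pairwise disjoint dyadic cells $I_n$ one places local configurations $(w_n,v_n)$ whose dyadic $A_p$ subproduct is $O(1)$ yet force a single Haar coefficient of the martingale transform to be of size $c_n\to\infty$; taking $c_n$ so that $\sum_n c_n^p=\infty$ produces an explicit $f\in L^p(v_d)$ whose dyadic transform has infinite $L^p(w_d)$ norm.

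The second step replaces the dyadic operator by $H$ using a Bourgain--Nazarov style remodeling, modified as announced in the introduction. Each dyadic cell carrying relevant Haar mass is expanded into a long chain of equal subintervals on which the weights are rescaled copies of a single building block; the chains are placed on the line with geometrically increasing gaps so that on each chain $H$ reproduces the action of the dyadic shift up to a controllable error, and cross-chain interactions are summable. The crucial analytic check is that the fattened characteristic \eqref{2w-A_p-fat}, which integrates the weights globally against Poisson-type kernels, remains finite after remodeling: thanks to the geometric separation of the chains, the Poisson integral at any pole $\lambda\in\C$ with $\im\lambda>0$ is dominated by the unique chain at the scale of $\im\lambda$ containing $\re\lambda$, and this local contribution is controlled by the dyadic two weight $A_p$ characteristic we started from.

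The main obstacle is the treatment of $p\ne 2$. Nazarov's original argument in \cite{Nazarov} exploits the self duality available at $p=2$; for general $p$ the two halves of the $A_p$ product \eqref{2w-A_p-fat} have different structure and transform asymmetrically under remodeling, so controlling $\int P(\cdot-\lambda)w$ and $\bigl(\int P(\cdot-\lambda)v^{-1/(p-1)}\bigr)^{p-1}$ simultaneously and uniformly in $\lambda$ is the delicate point. One has to tune the building blocks to balance both sides without disturbing the lower bound for $\|Hf\|\ci{L^p(w)}$; once this is achieved, the test function $f$ witnessing the failure on the dyadic side, transported through the remodeling, furnishes the required counterexample to Sarason's conjecture in Theorem \ref{t:dispro sar}.
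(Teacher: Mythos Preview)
Your high-level outline (dyadic counterexample, remodeling to reach $H$, direct sum to blow up the norm) matches the paper's skeleton, but you have missed the central mechanism by which the paper controls the fattened characteristic \eqref{2w-A_p-fat}, and your proposed substitute does not work.

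The paper never estimates the Poisson-type integrals in \eqref{2w-A_p-fat} directly, and never relies on ``geometric separation of chains''. Instead it reduces everything to the observation in Section~\ref{s:doubling}: if the doubling constants of $w$ and $\sigma=v^{-1/(p-1)}$ are both at most $2+\delta$ for small $\delta$, then $[w,\sigma]\ci{A_p}\ut{fat}\sim_p [w,\sigma]\ci{A_p}$ automatically. So the whole burden is to manufacture weights that are \emph{smooth} in this sense while still witnessing unboundedness of $H$. This is precisely what the ``small step'' construction (Section~\ref{s:SM constr}) is for: it converts the classical large-step dyadic examples into examples whose dyadic smoothness constant is as close to $1$ as desired. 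Iterated remodeling (Section~\ref{s:remodel}) then upgrades dyadic smoothness to strong dyadic smoothness, and Lemmas~\ref{l:naz1}--\ref{l:naz2} convert that into genuine smoothness and control of the full $A_p$ characteristic. Your proposal skips the small-step stage entirely; without it the remodeled weights have no reason to be smooth, and the fat characteristic need not be finite.

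Your specific mechanism --- place chains with geometrically increasing gaps and argue that the Poisson integral at $\lambda$ sees only the chain at scale $\im\lambda$ --- is not what the paper does and is not obviously salvageable: the kernels $(\im\lambda)^{p-1}/|x-\lambda|^p$ decay like $|x|^{-p}$, which for $p$ close to $1$ is too slow to localize to a single chain. In the actual construction (Section~\ref{s: counter sar}) the building blocks sit on \emph{adjacent} intervals $J_k=[2^{-k},2^{-k+1})$ with no gaps at all; what makes the fat $A_p$ finite is that the weights are smooth across the boundaries of the $J_k$ (Remark~\ref{r:pres end} forces the boundary averages to agree), so the dyadic $A_p$ characteristic is $O_p(1)$ and smoothness promotes this to the fat characteristic. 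Finally, your diagnosis of the $p\ne 2$ difficulty is off: the asymmetry is not in controlling the two Poisson factors (smoothness handles both symmetrically), but in the dyadic stage --- one must use the Haar shift of Section~\ref{s:LS Haar shift} rather than a Haar multiplier, and the corresponding small-step construction requires the triangle random walk of Section~\ref{s: Rand triang}.
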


In light of the discussion in Section \ref{s:doubling} the above theorem follows from the corresponding counterexample with ``smooth'' weights (i.e.~weights with small doubling constants). Namely, we prove the following theorem, which implies the above Theorem \ref{t:dispro sar}. 

\begin{thm}
\label{t:cex Sarason smooth}
Given $p\in(1,\infty)$ and arbitrarily small $\delta>0$, there exist weights $w,v$ on $\R $ satisfying \eqref{2w-A_p}, such that the doubling constants of the weights $w$ and $\sigma = v^{-1/(p-1)}$ are bounded by $2+\delta$ and the Hilbert transform is not a bounded operator acting from $L^p(v)$ to $L^p(w)$. In particular, this means than there exists $f\in L^p(v)$ such that $\|Hf\|\ci{L^{p}(w)}=\infty$.
\end{thm}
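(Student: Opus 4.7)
The plan follows the three-ingredient scheme described in the abstract: first produce a classical dyadic two-weight counterexample, then use the ``small step'' construction to smooth it into weights whose doubling constants are arbitrarily close to $2$, and finally apply a variant of the Bourgain--Nazarov remodeling to transfer the construction to the Hilbert transform. To upgrade a sequence of finite examples with growing operator norms into a single unbounded one, I will glue independent remodeled copies placed on disjoint far-apart intervals on $\mathbb{R}$.

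For the dyadic step, I would construct a two-weight counterexample for a dyadic model of $H$ (e.g.\ a martingale transform or a Haar shift on $[0,1)$). Given any target $N>0$, one produces finitely supported dyadic weights $w_N$, $v_N$ with uniformly bounded dyadic $A_p$ characteristic and a dyadic test function $f_N\in L^p(v_N)$ of unit norm whose dyadic transform has $L^p(w_N)$-norm exceeding $N$. A standard recipe uses weights that take two very different values on the two halves of each dyadic interval at many nested scales, so that the dyadic $A_p$ product of one-sided averages stays bounded while the dyadic transform blows up.

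For the smoothing step, I would replace each abrupt transition of the weight between a dyadic parent and its two children by many very mild transitions distributed over a large number of intermediate dyadic levels. If each intermediate jump alters the local average by a factor $1+\eta$, then the doubling constants of the resulting weights $w$ and $\sigma=v^{-1/(p-1)}$ can be forced below $2+\delta$ with $\delta=\delta(\eta)\to 0$ as $\eta\to 0$. The construction should be calibrated so that, along a sparse subfamily of dyadic intervals, the coarse averages agree up to constants with the original dyadic counterexample; hence both the dyadic $A_p$ characteristic and the lower bound on the dyadic transform are inherited by the smoothed pair, while the new weights are ``dyadically smooth'' in the sense that successive averages change by at most $1+\eta$.

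For the remodeling step, I would spatially arrange many translated and rescaled copies of the smoothed dyadic example along $\mathbb{R}$ so that the action of $H$ on the remodeled test function reproduces, up to a controllable error, the dyadic transform of the original $f_N$ on each copy. To promote this into genuine unboundedness of $H\colon L^p(v)\to L^p(w)$ I would then take a sequence $N_k\to\infty$ and place the corresponding remodeled examples on disjoint far-apart intervals $J_k$, with scalings chosen so that the local two-weight $A_p$ and doubling bounds are uniform in $k$ and the $L^p(v)$-norm of the glued test function is finite; the per-copy lower bound then forces $\|Hf\|_{L^p(w)}=\infty$. I expect the remodeling to be the main obstacle: the kernel $1/(x-y)$ is non-local and has to be tuned to imitate the dyadic transform on each copy while the cross-interactions between copies and the intra-copy errors stay controlled, and at the same time the small-step smoothing spreads the dyadic oscillation of the weight over many adjacent scales, so the remodeling must respect this multiscale structure without collapsing it or destroying the doubling bound $2+\delta$.
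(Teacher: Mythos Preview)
Your three-stage outline (dyadic counterexample $\to$ small-step smoothing $\to$ remodeling, then glue a sequence of increasingly bad examples) matches the paper's strategy, and you correctly identify remodeling as the delicate step. However, two concrete mechanisms that make the gluing work are missing from your proposal, and without them the final ``direct sum'' step has a genuine gap.

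First, the paper does \emph{not} build two-weight dyadic examples with uniformly bounded joint $A_p$ characteristic from scratch. It reuses the \emph{one-weight} examples of Theorem~\ref{t:main res}: for each $k$ one has $\sigma_k=w_k^{-1/(p-1)}$ with $[w_k]_{A_p}\sim k$, and then one \emph{separately} renormalizes $\widetilde w_k:=w_k/\langle w_k\rangle_{[0,1)}$ and $\widetilde\sigma_k:=\sigma_k/\langle\sigma_k\rangle_{[0,1)}$. This breaks the pointwise relation $\sigma=w^{-1/(p-1)}$ (so the output is genuinely two-weight) while forcing $\langle\widetilde w_k\rangle=\langle\widetilde\sigma_k\rangle=1$; the joint dyadic $A_p$ characteristic on intervals spanning several copies is then automatically $\sim_p 1$. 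Your proposal asserts ``uniformly bounded dyadic $A_p$ characteristic'' and ``scalings chosen so that the local two-weight $A_p$ \ldots\ bounds are uniform in $k$'' without saying how; this renormalization is exactly the missing device.

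Second, the paper places the copies on the \emph{adjacent} intervals $J_k=[2^{-k},2^{-k+1})$ inside $[0,1)$, not on far-apart intervals. What makes the doubling constant survive the joins is a specific feature of \emph{iterated} remodeling (Remark~\ref{r:end av}/\ref{r:pres end}): the average of the remodeled weight over any dyadic subinterval touching the boundary of $[0,1)$ equals the global average. After the renormalization above this average is $1$, so adjacent copies meet with matching edge averages and the strong dyadic smoothness bound $1+\delta$ propagates across the seams. Your ``far-apart intervals'' approach would need to specify the weight on the complement and control doubling where each copy meets the background; without the boundary-average property and the renormalization there is no reason the doubling constant stays near $2$ or the joint $A_p$ stays bounded on large intervals. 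These two ingredients are precisely what turns your outline into a proof.
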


\subsubsection{A counterintuitive result}
\label{s:counterintuitive}

It is an easy exercise to construct a weight with a prescribed $A_p$ characteristic. Moreover, one can find a weight taking only $2$ values. What is more interesting, and is not completely clear, is that in fact one can find such a weight with doubling constant arbitrarily  close to $2$.

\begin{prop}\label{p:counterint}
Let $p\in(1,\infty)$. Then, given $Q>1$ and arbitrarily small $\e>0$, there exists a weight $w$ on $\R$ taking only $2$ values, with $Q\leq [w]\ci{A_{p}}\leq c(p)Q$, such that the doubling constants of the weights $w$ and $\sigma=w^{-1/(p-1)}$ are bounded by $2+\e$. 
\end{prop}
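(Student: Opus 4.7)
The plan is to construct $w$ by iteratively refining a crude step-function template using the ``small step'' procedure developed earlier in the paper. First, set $a=1$ and $b=r$, where $r=r(p,Q)$ is chosen so that the maximum single-interval $A_p$ characteristic of a 2-valued weight with values in $\{1,r\}$ equals $\sim Q$. Optimizing
\[
\phi(\beta) = \bigl((1-\beta)+r\beta\bigr)\bigl((1-\beta)+r^{-1/(p-1)}\beta\bigr)^{p-1}
\]
over $\beta\in[0,1]$ shows the maximum is attained near $\beta^{*} = (p-1)/p$ and grows like $c(p)r$ for large $r$, so the right choice is $r\sim c(p)Q$. The upper bound $[w]\ci{A_p}\le c(p)Q$ is automatic from $\phi\le c(p)r$.

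Begin with a periodic 2-valued weight $w_{0}$ with values $\{1,r\}$ and local $b$-density approximately $\beta^{*}$ over a macroscopic interval, so that the $A_p$ characteristic of $w_0$ on that interval is comparable to $Q$. The doubling constants of $w_{0}$ and $\sigma_{0}$ are far from $2$ near every jump. Iteratively apply the small-step modification at $K\sim\varepsilon^{-1}\log r$ dyadic scales: at each step, neighborhoods of boundaries between $\{w=1\}$ and $\{w=r\}$ regions (at the currently finest ``resolved'' scale) are replaced by alternating finer 2-valued patterns, calibrated so that the coarser-scale averages are preserved exactly but the density of $\{w=r\}$ on a doubled interval differs from its density on the interval by at most a factor $(1+\varepsilon/2)$. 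The modification uses only the two original values.

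Verification proceeds in two steps. For $[w]\ci{A_p}$: since the refinement preserves the macroscopic averages inherited from $w_{0}$, the peak interval still carries $\beta\approx\beta^{*}$ and hence characteristic $\sim c(p)r\sim Q$. For doubling: writing $\beta_I$ for the density of $\{w=r\}$ in $I$, one has $\langle w\rangle\ci I = 1+(r-1)\beta_I$ and $\langle\sigma\rangle\ci I = 1+(r^{-1/(p-1)}-1)\beta_I$, so the self-similar small-step pattern bounds $\beta\ci{2I}/\beta_I$ (and the analogous $\sigma$-ratio) by $1+O(\varepsilon)$ uniformly in $I$; translating yields $w(2I)/w(I)\le 2+\varepsilon$ and $\sigma(2I)/\sigma(I)\le 2+\varepsilon$.

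The main obstacle is to carry out the refinement \emph{while staying strictly 2-valued}: every small-step modification must redistribute the same two values $\{1,r\}$ locally rather than introduce intermediate levels, which makes each step essentially a combinatorial placement problem. Moreover, the doubling estimate must hold for \emph{all} intervals, not only those dyadically aligned with the construction; the self-similar fractal arrangement produced by iterating the small-step procedure is precisely what prevents any single position or scale from creating a localized $(3+r)/2$-type jump in $w(2I)/w(I)$, and verifying this uniformly across scales is the delicate part that makes the result counterintuitive.
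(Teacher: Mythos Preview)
Your proposal is a sketch rather than a proof, and it misses the two structural observations that make the paper's argument work cleanly.

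First, the ``combinatorial placement problem'' you flag as the main obstacle is a non-issue in the paper's framework. Both the small-step transform (Section~\ref{s: SM Haar mult constr}) and the iterated remodeling (Section~\ref{s:remodel}) are given explicitly as compositions with measure-preserving transformations of $[0,1)$. Hence if the initial weight takes only the two values $a_1,a_2$, so does every subsequent weight, automatically. The paper's proof simply starts from the crudest possible two-valued weight $w=a_1\1\ci{[0,1/2)}+a_2\1\ci{[1/2,1)}$ with $\sigma=w^{-1/(p-1)}$ and $[w]\ci{A_p,\cD}=Q$ (the values $a_i$ come from Lemma~\ref{l: stop lower hyperb}), applies small step of large order $d$, then applies iterated remodeling, then extends by reflection to $\R$. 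No density optimization, no boundary-layer refinement, no placement problem.

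Second, and more seriously, your mechanism for controlling the \emph{full} doubling constant is incomplete. The small-step transform by itself controls only the \emph{dyadic} smoothness constant $S_w\ut{d}$ (Lemma~\ref{s:SM doubl}); it says nothing about ratios $\La w\Ra\ci I/\La w\Ra\ci J$ for adjacent dyadic $I,J$ of equal length that are not siblings, let alone for arbitrary intervals. Your claim that a self-similar pattern bounds $\beta\ci{2I}/\beta\ci I$ uniformly is exactly the hard part, and you have not verified it; indeed, for a weight that is merely dyadically smooth this is generally false. The paper handles this by layering iterated remodeling on top of the small-step output: remodeling upgrades dyadic smoothness to \emph{strong} dyadic smoothness $S_w\ut{sd}\le 1+\delta$ (Lemma~\ref{l:remodel doubl}), and then Nazarov's Lemmas~\ref{l:naz1} and~\ref{l:naz2} convert strong dyadic smoothness into control of the genuine smoothness constant $S_w$ (hence of $D_w$) and of the full $A_p$ characteristic. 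Your outline has no analogue of this step, so the passage from ``nice at dyadic scales'' to ``$D_w\le 2+\e$ for all intervals'' is unjustified.
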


\subsection{Plan of the paper.}

Our general strategy is as follows. We start with simple examples that give the desired lower bounds for dyadic (martingale) analogues of the Hilbert transform, in particular, for the so-called Haar shifts. These examples are simple ones, obtained as easy modifications of known examples; we call them the ``large step'' examples, to emphasize that we do not have any non-trivial bounds on the doubling constants of the weights involved. This is done in Section \ref{s:LS ex}.

From these examples we construct in Section \ref{s:SM constr} the so-called ``small step'' examples, where we preserve the desired lower bounds, but can make the so-called \emph{dyadic smoothness constant} (see the relevant definition in Subsection \ref{s:dyadic doubling} below) of the weights as close to $1$ as we want.  We present a general construction that allows us to do so. This step is absent in \cite{Nazarov}, where the ``small step'' example is obtained implicitly via the Bellman function method. 

The next step is to apply \emph{remodeling}, introduced in \cite{Nazarov}, which serves two purposes. First, it allows us to get from weights with dyadic smoothness constants arbitrarily close to $1$ to weights with doubling constants arbitrarily close to $2$. And second (and equally important) it allows us to get from the lower bounds for Haar shifts to the lower bounds for the Hilbert transform, which we need. However, the original remodeling from \cite{Nazarov} does not handle the one-weight situation well, since typically it gives a two-weight situation as its output. So to handle the one-weight situation we introduce the so-called \emph{iterated remodeling}, that allows us to prove Theorem \ref{t:main res} (and so Corollary \ref{c:linear lower}). The general method of iterated remodeling is presented in Section \ref{s:remodel}, while Subsection \ref{s:smooth Hilb} contains the particular application for the Hilbert transform. Subsection \ref{s:smooth Haar mult} describes analogous examples in the (easier) cases of Haar multipliers and the dyadic Hardy--Littlewood maximal function. Moreover, Subsection \ref{s:counterint proof} contains the counterintuitive result of Proposition \ref{p:counterint}, deduced as a byproduct of our general constructions.

Through a standard \emph{direct sum of singularities} type construction, the family of examples for the Hilbert transform yields in Subsection \ref{s: counter sar} a counterexample to the $L^p$ version of the Sarason's conjecture, (i.e.~Theorem \ref{t:cex Sarason smooth}, and therefore Theorem \ref{t:dispro sar}), so we are done in the two-weight case as well.

The main constructions of this paper exploit the usual structure of a filtered probability space on the unit interval $[0,1)$, and the fundamental correspondences between functions and martingales on the one hand, and martingales and random walks on graphs on the other hand. We briefly recall the relevant definitions and results in Subsections \ref{s:dyad filtr}, \ref{s:func-mart} and \ref{s:mart-rand walk}.

Finally, in the Appendix (Section \ref{s: appendix}) we collect a few results used throughout the paper: probability theoretic results on random walks (Subsection \ref{s: rand walks}), two remarks about ``stopping on the lower hyperbola" (Subsection \ref{s: stop lower hyperb}) and ``getting only a little above the upper hyperbola" (Subsection \ref{s: upper hyperb}), and we repeat the proofs of F. Nazarov's lemmas about Muckenhoupt characteristics and doubling constants from \cite{Nazarov} (Subsection\ref{s: naz lemma proof}).

\textbf{Acknowledgements.} We are grateful to Alexander Barron for reading a draft of the manuscript and for pointing out typos and other obscurities, and to the anonymous referee for the valuable feedback.

\section{Preliminaries}
\label{s:Prelim}

\subsection{Symmetric ``two weight'' setup.}
\label{s:symmetric setup}

In weighted estimates it is customary to rewrite a problem in a symmetric two-weight setup. For example, in an one-weight situation involving a weight $w$ (Theorem \ref{t:main res}) let us introduce an auxiliary weight $\sigma:= w^{-1/(p-1)}$ (the reader should have noticed that it already appears in the statement of Theorem \ref{t:main res}). If we denote $\wt{f}:= \sigma^{-1} f$, so $f=\wt{f}\sigma$, then 
\begin{align*}
\|\wt{f}\|\ci{L^p(\sigma)} = \|f\|\ci{L^{p}(w)} \qquad \text{and}\qquad T f = T(\wt{f}\sigma) ,
\end{align*}
for any linear operator $T$. 
Thus any weighted estimate of an operator $T$ over $L^p(w)$ is equivalent to the estimate of the operator $\wt{f} \mapsto T(\wt{f}\sigma)$ acting from $L^p(\sigma)$ to $L^p(w)$; note that if $T$ is an integral operator, then in the operator $f\mapsto T(f\sigma)$ integration is performed against the  measure that defines the norm in the domain $L^p(\sigma)$.

To prove Theorem \ref{t:main res} one needs to find a non-zero $f\in L^p(w)$ such that $\| H f\|\ci{L^p(w)}\ge c(p) \|f\|\ci{L^p(w)}$. This is equivalent to finding a non-zero $f\in L^p(\sigma)$ (we omit the tilde over $f$ here) such that 
\begin{align}
\label{main ineq 01}
\| H(f \sigma)\|\ci{L^p(w)}\ge c(p) M^s \|f\|\ci{L^p(\sigma)} ;
\end{align}
here, recall, $M\le [w]\ci{A_p}\le C(p) M$, and $\sigma = w^{-1/(p-1)}$. The weights $w$ and $\sigma$ should have doubling constants as close to 2 as we want. 

In a two-weight situation involving two weights $w$ and $v$ (Theorem \ref{t:cex Sarason smooth}) we  denote $\sigma = v^{-1/(p-1)}$. To prove Theorem \ref{t:cex Sarason smooth} we construct for arbitrarily large $R$ weights $\sigma$ and $w$ with doubling constants arbitrarily close to $2$ such that 
\begin{align*}
\langle w \rangle\ci I  \langle \sigma \rangle\ci I^{p-1} \le C(p)
\end{align*}
($C(p)$ does not depend on $R$) and a non-zero $f\in L^p(\sigma)$ such that
\begin{align}
\label{main ineq 02}
\| H(f\sigma)\|\ci{L^p(w)}\ge R \|f\|\ci{L^p(\sigma)}.
\end{align}

\subsection{Dyadic intervals and martingale differences}
\label{s:dyadic-mart}

For definiteness, by an interval we will always mean a half-open interval $[a,b)$. For an interval $I$ we denote by $I_{+}$ and $I_{-}$ its right and left halves respectively. The symbol $h\ci I $ denotes the $L^\infty$ normalized Haar function, 
\begin{align}
\label{Haar 01}
h \ci I = \1\ci{I_+ }  -\1\ci{I_-}. 
\end{align}
We emphasize, that in this paper we always use the $L^\infty$ normalized Haar functions. 

We say that two intervals $I,J$ in $\R $ are adjacent if $I\cap J=\varnothing$, and they have a common endpoint.

An interval $I$ in $\R $ is called a dyadic interval if $I=[k2^{n},(k+1)2^{n})$ for some $n,k\in\mathbb{Z}$. We denote by $\cD$ the family of all dyadic intervals in $\R$. For a dyadic interval $I$ we denote by $\cD(I)$ the collection of its dyadic subintervals (including $I$ itself). When there is no danger of confusion, we will denote $\cD([0,1))$ by $\cD$, abusing the notation. For all $I\in\cD$, the number $-\log_{2}(|I|)$ will be called generation of the interval $I$. Moreover, for all $N\in\mathbb{N}$ and for all $I\in\cD$, we denote by $\ch^{N}(I)$ (simply $\ch(I)$ if $N=1$) the family of all dyadic subintervals of $I$ of length $2^{-N}|I|$,  and if $\mathcal{G}$ is a family of dyadic intervals, then we set $\ch^{N}(\mathcal{G}):=\bigcup\ci{I\in\mathcal{G}}\ch^{N}(I)$. Moreover, if $\mathcal{G}$ is a family of pairwise disjoint dyadic intervals then we denote
\begin{equation*}
\mathbb{E}\ci{\mathcal{G}}[f]:=\sum_{I\in\mathcal{G}}\La f\Ra\ci{I}\1\ci{I}.
\end{equation*}

For all $f\in L^\1_{\text{loc}}(\R )$ and for all $I\in\cD$, we denote by $\Delta\ci{I}f$ the martingale difference
\begin{equation*}
\Delta\ci{I}f:=\sum_{I'\in\ch(I)}\La f\Ra\ci{I'}\1\ci{I'}-\La f\Ra\ci{I}\1\ci{I}=\La f\Ra{\ci{I_{+}}}1{\ci{I_{+}}}+\La f\Ra{\ci{I_{-}}}1{\ci{I_{-}}}-\La f\Ra\ci{I}\1\ci{I},
\end{equation*}
and by $\ssDelta\ci{I}f$ the difference of averages (or Haar coefficient)
\begin{equation*}
\ssDelta\ci{I}f := \La f\Ra\ci{I_+} - \La f\Ra\ci{I} = \frac{\La f\Ra\ci{I_+} - \La f\Ra\ci{I_-}}{2} = \La fh\ci I\Ra\ci I.
\end{equation*}
Notice that martingale differences and Haar coefficients are related by
\begin{equation*}
\Delta\ci{I}f=(\ssDelta\ci{I}f)h\ci{I}.
\end{equation*}

\subsection{Weights and doubling constants}
\label{s:dyadic doubling}

Given weights $w,\sigma$ on $\R $ and $p\in(1,\infty)$, we define the joint dyadic Muckenhoupt $A_{p}$ characteristic of $w,\sigma$ by
\begin{equation*}
[w,\sigma]\ci{A_{p},\cD}:=\sup_{I\in\cD}\La w\Ra\ci{I}\La\sigma\Ra\ci{I}^{p-1}
\end{equation*}
and the dyadic Muckenhoupt characteristic of $w$ by $[w]\ci{A_{p},\cD}:=[w,w^{-1/(p-1)}]\ci{A_{p},\cD}$. Following \cite[\S 1]{Nazarov}, we define the smoothness constant
\begin{equation*}
S_{w}=\sup_{I}\max\left(\frac{\La w\Ra{\ci{I_{-}}}}{\La w\Ra{\ci{I_{+}}}},\frac{\La w\Ra{\ci{I_{+}}}}{\La w\Ra{\ci{I_{-}}}}\right),
\end{equation*}
where the supremum is taken over all intervals $I$ in $\R$, and the dyadic smoothness constant
\begin{equation*}
S_{w}\ut{d}=\sup_{I\in\cD}\max\left(\frac{\La w\Ra{\ci{I_{-}}}}{\La w\Ra{\ci{I_{+}}}},\frac{\La w\Ra{\ci{I_{+}}}}{\La w\Ra{\ci{I_{-}}}}\right).
\end{equation*}
It is easy to see that $D_{w}\leq S_{w}+1$. Note also that $1\leq S_{w}\ut{d}\leq S_{w}$. Moreover, as in \cite[\S 6]{Nazarov}, we define the strong dyadic smoothness constant
\begin{equation*}
S_{w}\ut{sd}=\sup_{I,J}\frac{\La w\Ra\ci{I}}{\La w\Ra_{J}},
\end{equation*}
where the supremum is taken over all adjacent intervals $I,J\in\cD$ with $|I|=|J|$. Obviously $S_{w}\ut{sd}\geq S_{w}\ut{d}$. Of course all these definitions can be given over $[0,1)$, and we will use the same notation as above for Muckenhoupt characteristics and smoothness constants over $[0,1)$ (note that local integrability over $[0,1)$ means here integrability over $[0,1)$).

It turns out that the strong dyadic smoothness constant can provide some control over the smoothness constant, and the dyadic Muckenhoupt characteristic over the full Muckenhoupt characteristic, provided the strong dyadic smoothness constant is sufficiently close to 1.

\begin{lm}\label{l:naz1}(F. Nazarov, \cite[\S 6]{Nazarov})
For all $\e>0$, there exists $\delta=\delta(\e)>0$, such that for all weights $w$ on $\R $ with $S\ut{sd}_{w}\leq 1+\delta$ there holds $S_{w}\leq 1+\e$.
\end{lm}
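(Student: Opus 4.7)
The plan is to reduce $\La w\Ra_{I_-}/\La w\Ra_{I_+}$ for an arbitrary interval $I$ to a ratio of averages over adjacent dyadic intervals of equal size, where the hypothesis $S\ut{sd}_w \le 1+\delta$ applies directly. Equivalently, I need to bound $\La w\Ra_{I_1}/\La w\Ra_{I_2}$ for any pair of adjacent intervals $I_1, I_2$ of the same length $\ell$ (just take $I_1 = I_-$, $I_2 = I_+$). By rescaling $w$ by a power of $2$ (which preserves $S\ut{sd}_w$ since it merely reshuffles the dyadic grid), I may assume $\ell \in [1, 2)$.

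Fix a very fine dyadic scale $2^{-N}$ with $N$ large (to be chosen depending on $\e$). Approximate each $I_i$ by the union of scale-$2^{-N}$ dyadic intervals fully contained in $I_i$, leaving boundary fragments of total measure $\le 2 \cdot 2^{-N}$. Each boundary fragment has weight bounded by $(1+\delta)\cdot 2^{-N}$ times a neighboring scale-$2^{-N}$ dyadic average (using $S\ut{sd}_w\le 1+\delta$ to relate nearby averages), so the relative boundary contribution to $\La w\Ra_{I_i}$ is $O(2^{-N}/\ell)$, which can be made as small as desired by choosing $N$ large.

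The ``bulk'' of each $I_i$ is a union of $K_i \approx \ell \cdot 2^N$ consecutive scale-$2^{-N}$ dyadic intervals. The crucial step is to bound the ratio of the bulk sums. This exploits the multi-scale character of $S\ut{sd}_w \le 1+\delta$: the assumption holds at every scale simultaneously. Each bulk block decomposes into at most $O(N)$ maximal dyadic sub-blocks (spanning scales $2^{-N}, 2^{-N+1}, \ldots, 1$), and a telescoping comparison of the two blocks through these sub-blocks -- using $S\ut{sd}_w \le 1+\delta$ at each intermediate scale to control adjacent same-size dyadic averages -- yields a bulk ratio bound of the form $(1+\delta)^{O(N)}$. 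In the best case of perfect alignment (where $K_i$ is a power of $2$ and the block is dyadic at the coarser scale $K_i\cdot 2^{-N}$), this reduces to a single factor $1+\delta$, and for general alignment the $O(N)$ loss is the worst case.

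The main obstacle is the quantitative interplay between $\delta$, $N$, and $\e$: one must first choose $N = N(\e)$ large enough to make the boundary correction $\le \e/2$, and then $\delta = \delta(\e, N)$ small enough to make $(1+\delta)^{O(N)} \le 1 + \e/2$. Since both choices depend only on $\e$ after the scale reduction to $\ell \in [1,2)$, combining them gives $\La w\Ra_{I_1}/\La w\Ra_{I_2} \le 1 + \e$, and hence $S_w \le 1+\e$ whenever $\delta \le \delta(\e)$.
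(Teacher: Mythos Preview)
Your argument is correct, though the order of steps should be reversed: the boundary estimate ``relative contribution $O(2^{-N}/\ell)$'' tacitly uses that the nearby scale-$2^{-N}$ dyadic average $\langle w\rangle_J$ is comparable to $\langle w\rangle_{I_i}$, which is only justified \emph{after} your bulk analysis shows all scale-$2^{-N}$ dyadic averages in the region are within $(1+\delta)^{O(N)}$ of each other. Once $\delta$ is chosen with $(1+\delta)^{O(N)}\le 2$ this is fine, so the logic closes---but state the bulk bound first.

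Your route differs from the paper's. Nazarov's proof uses a \emph{single} dyadic scale $|J|\approx\sqrt{\delta}\,|I|$ and chains through $\approx 1/\sqrt{\delta}$ adjacent same-size dyadic intervals covering $I$, accumulating a factor $(1+\delta)^{O(1/\sqrt{\delta})}\to 1$; the boundary error is $O(\sqrt{\delta})$. Thus a single parameter $\delta$ controls both effects simultaneously, and $\delta(\e)$ is given explicitly. You instead fix a fine scale $2^{-N}$ independently of $\delta$ and exploit the \emph{multi-scale} dyadic structure (going up $N$ levels, across, and back down, or equivalently through the $O(N)$ maximal dyadic sub-blocks) to compare averages in $O(N)$ steps rather than $O(2^N)$; this forces a two-stage choice, first $N=N(\e)$, then $\delta=\delta(\e,N)$. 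The paper's argument is shorter and gives cleaner quantitative dependence; yours makes the separation of the two error sources (discretisation versus smoothness) more transparent and would adapt more readily to settings where one wants to fix the resolution in advance.
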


\begin{lm}\label{l:naz2}(F. Nazarov, \cite[\S 11]{Nazarov})
For all $p\in(1,\infty)$, there exists $\delta=\delta(p)>0$, such that for all weights $w,\sigma$ on $\R $ with $[w,\sigma]\ci{A_{p},\cD}<\infty$ and $S\ut{sd}_{w},S\ut{sd}_{\sigma}\leq 1+\delta$ there holds $[w,\sigma]\ci{A_{p}}\leq(5/4)[w,\sigma]\ci{A_{p},\cD}$.
\end{lm}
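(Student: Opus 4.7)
The plan is to show, for an arbitrary interval $I\subset\R$, that
$$\La w\Ra\ci I\La\sigma\Ra\ci I^{p-1}\le(5/4)[w,\sigma]\ci{A_p,\cD},$$
by replacing $\La w\Ra\ci I$ and $\La\sigma\Ra\ci I$ with averages over a single (small) dyadic interval $K_0$, and then applying the dyadic Muckenhoupt bound at $K_0$. Fix a large integer $N=N(p)$ (to be chosen at the end). Let $k\in\mathbb{Z}$ satisfy $2^k\le|I|<2^{k+1}$, and let $\mathcal{K}$ be the collection of all dyadic intervals of length $2^{k-N}$ that meet $I$. Then $\mathcal{K}$ is a block of consecutive, pairwise adjacent, same-length dyadic intervals with $|\mathcal{K}|\le 2^{N+1}+2$. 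Setting $S:=\bigcup\mathcal{K}$, one has $I\subseteq S$, and only the (at most two) boundary intervals of $\mathcal{K}$ extend past $\partial I$, each contributing at most $2^{k-N}$ in excess length, so
$$|S|/|I|\le 1+2\cdot 2^{k-N}/|I|\le 1+2^{1-N}.$$

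\textbf{Chaining and averaging.} Fix any $K_0\in\mathcal{K}$. Any $K\in\mathcal{K}$ is connected to $K_0$ by a chain of at most $|\mathcal{K}|-1$ adjacent same-length dyadic intervals inside $\mathcal{K}$, so the hypothesis $S_{w}\ut{sd},S_{\sigma}\ut{sd}\le 1+\delta$ gives, for every $K\in\mathcal{K}$,
$$\La w\Ra\ci K\le(1+\delta)^{|\mathcal{K}|-1}\La w\Ra\ci{K_0},\qquad\La\sigma\Ra\ci K\le(1+\delta)^{|\mathcal{K}|-1}\La\sigma\Ra\ci{K_0}.$$
Using $I\subseteq S=\bigcup_{K\in\mathcal{K}}K$,
$$\La w\Ra\ci I\le\frac{1}{|I|}\int_S w=\frac{1}{|I|}\sum_{K\in\mathcal{K}}|K|\La w\Ra\ci K\le\frac{|S|}{|I|}(1+\delta)^{|\mathcal{K}|-1}\La w\Ra\ci{K_0}\le C(N,\delta)\La w\Ra\ci{K_0},$$
where $C(N,\delta):=(1+2^{1-N})(1+\delta)^{2^{N+1}+1}$, and analogously for $\sigma$. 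Since $K_0\in\cD$,
$$\La w\Ra\ci I\La\sigma\Ra\ci I^{p-1}\le C(N,\delta)^p\La w\Ra\ci{K_0}\La\sigma\Ra\ci{K_0}^{p-1}\le C(N,\delta)^p\,[w,\sigma]\ci{A_p,\cD}.$$

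\textbf{Parameter choice and main obstacle.} It suffices to arrange $C(N,\delta)^p\le 5/4$. First choose $N=N(p)$ large enough that $(1+2^{1-N})^p\le(5/4)^{1/2}$; then choose $\delta=\delta(p)>0$ small enough that $(1+\delta)^{p(2^{N+1}+1)}\le(5/4)^{1/2}$. Multiplying these yields $C(N,\delta)^p\le 5/4$, as desired. The principal obstacle is the tension between the two sources of error: the boundary correction $2^{1-N}$ pushes toward large $N$, but the chain length in $\mathcal{K}$ is of order $2^N$, so the chaining factor $(1+\delta)^{|\mathcal{K}|-1}$ demands $\delta\lesssim 2^{-N}$. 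Sequential selection (first $N=N(p)$, then $\delta=\delta(p,N)=\delta(p)$) neatly resolves the tension, and the resulting $\delta$ depends only on $p$, as required by the statement.
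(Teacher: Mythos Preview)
Your proof is correct and follows essentially the same approach as the paper's: cover an arbitrary interval $I$ by a block of consecutive small dyadic intervals, use the strong dyadic smoothness hypothesis to chain all their averages to that of a single dyadic $K_0$, and then invoke the dyadic $A_p$ bound at $K_0$. The paper packages the chaining step by quoting the Claim established inside the proof of Lemma~\ref{l:naz1} (with the scale of the small dyadic intervals coupled to $\delta$ via $\sqrt{\delta}$), whereas you write the chaining out directly and decouple the scale parameter $N$ from $\delta$; this is only a cosmetic difference. One small point worth noting: your version makes explicit that the \emph{same} dyadic interval $K_0$ is used for both $w$ and $\sigma$, which is exactly what is needed for the two-weight statement, while the paper's write-up is phrased in one-weight notation (though the argument there applies verbatim since the reference interval depends only on $I$).
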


For reasons of completeness, we give the proofs of both these lemmas in Subsection \ref{s: naz lemma proof} in the Appendix. In this paper, the phrase ``smoothness of weights" will always refer to the above smoothness constants.

So we see that in order to dominate Muckenhoupt characteristics and doubling constants, it suffices to dominate strong dyadic smoothness constants and dyadic Muckenhoupt characteristics. We will see in Section \ref{s:remodel} that F. Nazarov's method of remodeling will allow us to dominate strong dyadic smoothness constants by dyadic smoothness constants.

\subsection{Dyadic filtration}
\label{s:dyad filtr}

For $n=0,1,2,\ldots$, set
\begin{equation*}
\cD_{n}=\lbrace I\in\cD([0,1)):\;|I|=2^{-n}\rbrace,
\end{equation*}
and let $\cF_n$ be the $\sigma$-algebra of subsets of $[0,1)$ generated by the family $\cD_n$, 
 i.e. the smallest $\sigma$-algebra of subsets of $[0,1)$ containing $\cD_n$. 
Clearly $\cF_{n}\subseteq\cF_{n+1}$, for all $n=0,1,2,\ldots$, so the sequence $\mathbb{F}:=(\cF_n)^{\infty}_{n=0}$ of $\sigma$-algebras is a filtration on $[0,1)$ (sometimes called the dyadic filtration). Notice that the Borel $\sigma$-algebra $\cF$ of $[0,1)$ is the smallest $\sigma$-algebra containing all $\cF_n$, or equivalently, the $\sigma$-algebra  generated by the family $\bigcup_{n=0}^{\infty}\cD_n$. 

Taking for the probability measure $\mathbb{P}$ the Lebesgue measure on $[0,1)$, we can see that $([0,1),\mathcal{F},\mathbb{P},\mathbb{F})$ is a filtered probability space. Denote by $\E_n$ the conditional expectation with respect to the $\sigma$-algebra $\cF_n$, $\E_n[f] = \E(f|\cF_n)$. The operator $\E_n$ admits a simple formula
\begin{equation*}
\mathbb{E}_n[f]=\sum_{I\in\mathcal{D}_{n}}\La f\Ra\ci{I}\1\ci{I}.
\end{equation*}
We will use the symbol $\E$ for the expectation operator, $\E f:=\E_0 f = \La f\Ra\ci{[0,1)}  \1\ci{[0,1)}$. 

Recall that a sequence $(X_n)^{\infty}_{n=0}$ of integrable functions on a filtered probability space is called a martingale  if $X_n$ is $\cF_n$-measurable and
\begin{equation*}
\mathbb{E}_{n}[X_{n+1}]=X_n,
\end{equation*}
for all $n=0,1,2,\ldots$. In the sequel, all martingales on $[0,1)$ will always be considered with respect to the dyadic filtration (and called then just dyadic martingales).

Note that every dyadic interval can be given the structure of a filtered probability space by simply translating and rescaling the unit interval.

\subsection{Functions and martingales}
\label{s:func-mart}

A function $f\in L^1([0,1);\R ^{N})$  naturally induces an $\R ^N$-valued martingale $X=(X_{n})_{n=0}^{\infty}$ on $[0,1)$, 
\begin{equation*}
X_{n}=\E_n f = \sum_{I\in\mathcal{D}_{n}}\La f\Ra\ci{I}\1\ci{I},\qquad n=0,1,2,\ldots.
\end{equation*}
Note, that not all martingales are induced by a function, only the so-called \emph{uniformly integrable} ones. However, in this paper we will be considering only uniformly bounded martingales, which are trivially uniformly integrable, and so are always induced by a function. 

If a martingale $X$ is induced by a function $f$, then $f$ can be easily restored from $X$, namely $X_n\to f$ a.e.~and in $L^1$; for uniformly bounded martingales we have, in fact, convergence in all $L^p$, $1\le p<\infty$.   

It turn out that in many problems of harmonic analysis it is more convenient to work not with a function, but with the induced martingale.
In our context that means that we keep track of averages of functions, instead of the functions themselves. In our examples, we deal with functions $w,\sigma,f,g,$ and we are keeping track of the averages of functions $w,\sigma,\bff=:f\sigma,\bg=:gw$ (then $f=\bff/\sigma$ and $g=\bg/w$). 

\subsection{Martingales and random walks}
\label{s:mart-rand walk}

Let $X=(X_{n})_{n=0}^{\infty}$ be an $\R ^N$-valued martingale on $[0,1)$. For 
$I\in\cD_n$  the function $X_n$ is constant on $I$; we denote by $\La X\Ra\ci{I}$ its constant value there. Note that if the martingale $X$ is induced by a function,  which we, slightly abusing the notation, also denote by $X$,   then  $\La X\Ra\ci{I}$ as defined above is indeed the average of the function $X$.  It is easy to see that
 \begin{equation}
 \label{e:MartDyn}
 \La X\Ra\ci{I}=\frac{\La X\Ra{\ci{I_{-}}}+\La X\Ra{\ci{I_{+}}}}{2},\qquad\forall I\in\cD.
 \end{equation}
In the language of \cite[Subsection 5.1]{failure} the above identity says that the family $\lbrace\La X\Ra\ci{I}\rbrace{\ci{I\in\cD}}$ has ``martingale dynamics''. 

We also define the difference of averages (or Haar coefficient) $\ssDelta\ci{I}X$,
\begin{equation*}
\ssDelta\ci{I}X:=\La X \Ra{\ci{I_{+}}}-\La X\Ra\ci{I}=\frac{\La X\Ra{\ci{I_{+}}}-\La X\Ra{\ci{I_{-}}}}{2},\qquad I\in\cD.
\end{equation*}
Again, if, slightly abusing the notation, we denote by $X$ the function inducing the uniformly bounded martingale $X$, then the two definitions of $\ssDelta\ci{I}X$ are consistent.

The dyadic martingale $X$ can be interpreted as a random walk on an image of a binary tree; in what follows we will call this image the \emph{graph} of $X$.  

To describe this random walk, notice that the collection $\cD$ of dyadic intervals carries a natural structure of the full  binary tree, with vertices being the dyadic intervals, and the edges connecting an interval with its two children. 

The collection $\cD$ of dyadic intervals can be naturally interpreted as the standard random walk on the full dyadic tree, where one moves from a vertex $I$ to each of its children with probability $1/2$. 
Each point $x\in[0,1)$ represents a trajectory on the full binary tree  $\cD$, that at the time $n$ it is at the unique $I\in\cD_n$ containing $x$. 

The martingale $X$ naturally induces a map from the dyadic tree $\cD$ to $\R^N$, where the vertex corresponding to $I\in\cD$ goes to the point $\La X\Ra\ci I\in\R^N$, and the edges go to straight line segments connecting the corresponding points; we will call this image the \emph{graph} of $X$. The random walk on the dyadic tree $\cD$ is then mapped to the random walk on the graph of $X$, that moves from a point   $\La X\Ra\ci{I}$ by the steps $\pm {\ssDelta\ci{I}X}$ with equal probabilities $1/2$.

In view of the martingale dynamics identity \eqref{e:MartDyn} above, $\La X\Ra\ci{I}$ always occupies the midpoint of the straight line segment connecting $\La X\Ra\ci{I_{-}}$ and $\La X\Ra\ci{I_{+}}$; we will say in what follows  that this segment corresponds to the interval $I\in\cD$. 



The interpretation of dyadic martingales as random walks on images of a binary tree gives helpful intuition into the constructions we are using. While it is not required for the formal construction, we feel that it could help the reader to understand and visualize what is going on.

In our examples, we deal with functions $w,\sigma,f,g$, where $w=\sigma^{-1/(p-1)}$ for some $1<p<\infty$, and random walks correspond to the martingales induced by the functions $w,\sigma,\bff:=f\sigma,\bg:=gw$. Our transforms will be applied to the functions $w,\sigma,\bff,\bg$, to produce functions $\wt{w},\wt{\sigma},\wt{\bff},\wt{\bg}$ respectively. The random walk corresponding to the martingale induced by the function $(w,\sigma)$ terminates with probability 1 on the hyperbola given in the $uv$-plane by $uv^{p-1}=1$, because $w\sigma^{p-1}=1$ a.e. on $[0,1)$. Our transforms will need to guarantee that the new weights $\wt{w},\wt{\sigma}$ we get continue to satisfy this relation. As we will see, on the level of weights our transforms will amount to composition with measure-preserving transformations, and therefore such relations will be automatically preserved. In addition, we will see that the relevant weighted norms $\Vert \wt{f}\Vert\ci{L^{p}(\wt{\sigma})}$, $\Vert\wt{g}\Vert\ci{L^{p'}(\wt{w})}$ are not larger (up to constants depending only on $p$) than $\Vert f\Vert\ci{L^{p}(\sigma)}$, $\Vert g\Vert\ci{L^{p'}(w)}$ respectively, where $\wt{f}=\wt{\bff}/\wt{\sigma}$ and $\wt{g}=\wt{\bg}/\wt{w}$.

\section{``Large step'' examples}
\label{s:LS ex}

We construct in this section ``large step" examples for the Haar multiplier, and for a special type of Haar shift, defined in Subsection \ref{s:LS Haar shift}. 

Let $p\in(1,\infty)$ and $M>2$. Set $\beta=1-\frac{1}{2Me}\in\left(\frac{1}{2},1\right)$. Set $I_0=[0,1)$ and $I_{n}=\left[0,\frac{1}{2^{n}}\right)$, $J_{n}=\left[\frac{1}{2^{n}},\frac{1}{2^{n-1}}\right)$, for all $n=1,2,\ldots$. Consider the functions $w,\sigma$ on $[0,1)$ given by
\begin{equation*}
w=\sum_{n=1}^{\infty}2^{n\beta}1{\ci{J_{n}}},\;\;\;
\sigma=\sum_{n=1}^{\infty}2^{-n\beta/(p-1)}1{\ci{J_{n}}}.
\end{equation*}
Then, $w,\sigma$ are weights on $[0,1)$ with $\sigma=w^{-1/(p-1)}$. Note that $w([0,1))\sim M$ and $\sigma([0,1))\sim_{p} 1$. Notice that $x^{-\beta}\leq w(x)\leq 2^{\beta}x^{-\beta}$ and $2^{-\beta/(p-1)}x^{\beta/(p-1)}\leq\sigma(x)\leq x^{\beta/(p-1)}$, for all $x\in(0,1)$. Then, direct computation shows that
\begin{equation*}
M\leq2^{-\beta}\frac{(1-\beta)^{-1}}{e}\leq\La w\Ra{\ci{I_{n}}}\La\sigma\Ra{\ci{I_{n}}}^{p-1}\leq 2^{\beta}(1-\beta)^{-1}\leq 4Me,\qquad\forall n=0,1,2,\ldots.
\end{equation*}
It follows that $M\leq[w]\ci{A_{p},\cD}\leq 4Me$. Direct computation gives also $\ssDelta{\ci{I_{n}}}w<0$ and $-\ssDelta{\ci{I_{n}}}w\sim(1-\beta)^{-1}2^{n\beta}$, for all $n=0,1,2,\ldots$.

Consider the uniformly integrable real-valued martingales $X,Y$ induced by $w,\sigma$ respectively. Note that by a very easy application of Jensen's inequality as in \cite[Lemma 4.1]{Convex} we have $X_{n}Y_{n}^{p-1}\geq 1$, for all $n=0,1,2\ldots$. Also note that the graph of the martingale $Z=(X,Y)$ consists of  the straight line segments connecting $\La Z\Ra{\ci{J_{n}}}$ and $\La Z\Ra{\ci{I_{n}}}$, for $n=1,2,\ldots$, see Figure \ref{hyperbolas_large} (the constant $c_{p,\beta}$ in Figure \ref{hyperbolas_large} satisfies $1\leq c_{p,\beta}\leq4e$).

\begin{figure}[h]\centering
\includegraphics[width=\linewidth]{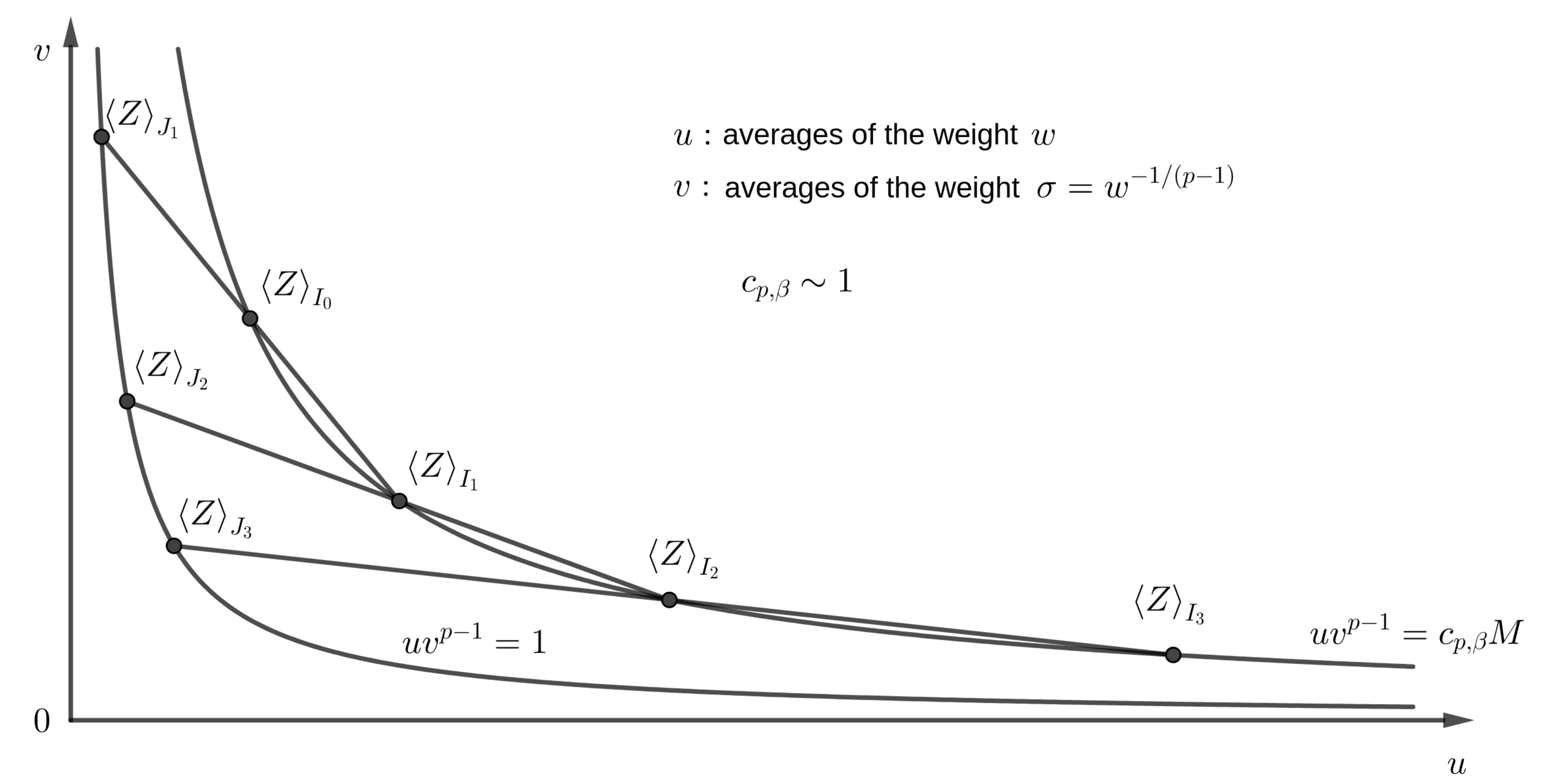}
\caption{Random walk in the $uv$-plane corresponding to the pair of weights $(w,\sigma)$}
\label{hyperbolas_large}
\end{figure}

Notice moreover that $S\ut{d}_{w}\sim(1-\beta)^{-1}\sim M$, therefore we have no control over the dyadic smoothness constant of $w$.

We will now truncate the weights $w,\sigma$. We have
\begin{equation*}
\sum_{n=0}^{\infty}2^{n(\beta-1)}=\frac{1}{1-2^{\beta-1}}\gtrsim(1-\beta)^{-1}=2Me.
\end{equation*}
Therefore, there exists a positive integer $N=N{\ci{M}}$ greater than 1, such that
\begin{equation*}
\sum_{n=0}^{N}2^{n(\beta-1)}\gtrsim M.
\end{equation*}
The folllowing lemma, whose proof is given in Subsection \ref{s: stop lower hyperb} of the appendix, implies that there exist $a_1,a_2,b_1,b_2>0$ such that $(a_1+a_2)/2=\La w\Ra{\ci{I{\ci{N+1}}}}$, $(b_1+b_2)/2=\La\sigma\Ra{\ci{I{\ci{N+1}}}}$ and $a_1b_1^{p-1}=a_2b_2^{p-1}=1$.
\begin{lm}\label{l: stop lower hyperb}
 Let $x,y>0$ be arbitrary, such that $xy^{p-1}\geq1$. Then, there exist $a_1,b_1,a_2,b_2>0$ with $a_2\leq x\leq a_1$ and $b_1\leq y\leq b_2$, such that $a_1b_1^{p-1}=a_2b_2^{p-1}=1$ and $x=\frac{a_1+a_2}{2}$, $y=\frac{b_1+b_2}{2}$.
\end{lm}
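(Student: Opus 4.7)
The plan is to parametrize the ``northwest'' point on the hyperbola $uv^{p-1}=1$ by a single real variable and reduce the existence problem to the intermediate value theorem. Specifically, I would set $a_1 = x+t$ and $a_2 = x-t$ for $t\in[0,x)$, which automatically ensures $(a_1+a_2)/2 = x$ and $a_2 \leq x \leq a_1$. The hyperbola constraints $a_i b_i^{p-1} = 1$ then force
\[
b_1 = (x+t)^{-1/(p-1)}, \qquad b_2 = (x-t)^{-1/(p-1)},
\]
so the only remaining condition to arrange is $(b_1+b_2)/2 = y$, i.e.\ $f(t)=y$, where
\[
f(t) := \tfrac{1}{2}\left[(x+t)^{-1/(p-1)} + (x-t)^{-1/(p-1)}\right], \qquad t\in[0,x).
\]

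The existence of such a $t$ follows from three easy observations about $f$: (i) $f$ is continuous on $[0,x)$; (ii) $f(0) = x^{-1/(p-1)}$, and this is $\leq y$ precisely because of the standing hypothesis $xy^{p-1}\geq 1$; (iii) $f(t)\to +\infty$ as $t\to x^-$, since the term $(x-t)^{-1/(p-1)}$ blows up. The intermediate value theorem then produces $t_0\in[0,x)$ with $f(t_0)=y$. Setting $a_1,a_2,b_1,b_2$ as above with $t=t_0$ yields positive numbers satisfying $a_1b_1^{p-1}=a_2b_2^{p-1}=1$ and $(a_1+a_2)/2=x$, $(b_1+b_2)/2=y$. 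Since $b_1\leq b_2$ (because $x-t_0\leq x+t_0$), the average condition forces $b_1\leq y\leq b_2$ automatically.

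There is no real obstacle here, only a bookkeeping check: one should verify that the case of equality $y = x^{-1/(p-1)}$ is handled (in which case $t_0=0$ and $a_1=a_2=x>0$), and that for $y > x^{-1/(p-1)}$ the IVT gives $t_0\in(0,x)$ strictly, so all four numbers remain strictly positive as required. Monotonicity of $f$ (a short differentiation shows $f'(t)>0$ on $(0,x)$) is not needed for existence but could be mentioned to get uniqueness of $t_0$ if desired. Geometrically, the argument simply says: the vertical line $u=x$ intersects the convex curve $uv^{p-1}=1$ at the point $(x, x^{-1/(p-1)})$, and every point $(x,y)$ lying on or above the hyperbola is the midpoint of some horizontal chord of the hyperbola with center on that vertical line.
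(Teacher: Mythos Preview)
Your proof is correct and takes essentially the same approach as the paper: both reduce the problem to a one-variable intermediate value theorem argument on a family of chords of the hyperbola $uv^{p-1}=1$. The only cosmetic difference is that the paper parametrizes by the $b$-coordinate (taking $b_1\in(0,y]$, $b_2=2y-b_1$, and solving $b_1^{1-p}+b_2^{1-p}=2x$), while you parametrize by the $a$-coordinate via $a_1=x+t$, $a_2=x-t$ and solve for $(b_1+b_2)/2=y$; the two are dual and the analytic content is identical.
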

Without loss of generality, we may assume that $a_1<a_2$. Consider the bounded weights
\begin{equation*}
w'=\sum_{n=1}^{N+1}2^{n\beta}1{\ci{J_{n}}}+a_1 1{\ci{J{\ci{N+2}}}}+a_2 1{\ci{I{\ci{N+2}}}},\;\;\;\sigma'=\sum_{n=1}^{N+1}2^{-n\beta/(p-1)}1{\ci{J_{n}}}+b_1 1{\ci{J{\ci{N+2}}}}+b_2 1{\ci{I{\ci{N+2}}}}.
\end{equation*}
on $[0,1)$. Notice that $\ssDelta{\ci{I{\ci{N+1}}}}w'=(a_1-a_2)/2<0$. In what follows, we abuse the notation denoting $w',\sigma'$ by $w,\sigma$ respectively.

\subsection{Example for the Haar multiplier}
\label{s:LS Haar mult}

For any choice of signs $\e=(\e\ci{I})\ci{I\in\cD}$ denote by $T_{\e}$ the Haar multiplier on $[0,1)$ corresponding to $\e$, i.e. $T_{\e}$ acts on functions $f\in L^2([0,1))$ via
\begin{equation*}
T_{\e}(f)=\sum_{I\in\cD}\e\ci{I}(\ssDelta\ci{I}f)h\ci{I}.
\end{equation*}
Consider the function $\bff$ on $[0,1)$ given by
\begin{equation*}
\bff=\sum_{n=1}^{\infty}(-1)^{n-1}1{\ci{J_{n}}}.
\end{equation*}
Direct computation gives that for all $I\in\cD$, we have $\ssDelta\ci{I}\bff\neq0$ if and only if $I=I_{n}$ for some $n\in\mathbb{N}$, in which case $\ssDelta\ci{I}\bff=\frac{2(-1)^{n+1}}{3}$. Consider also the function $\bg=-w$ on $[0,1)$. Consider the functions $f=\bff/\sigma,\;g=\bg/w$ on $(0,1)$. Then
\begin{equation*}
\Vert f\Vert ^{p}\ci{L^{p}(\sigma)}=w([0,1))\sim M,\;\;\;\Vert g\Vert ^{p'}\ci{L^{p'}(w)}=w([0,1))\sim M.
\end{equation*}
Moreover, we have
\begin{align*}
&\sup_{\e\in\mathcal{E}}|\La T_{\e}(f\sigma),gw\Ra|=\sup_{\e\in\mathcal{E}}\left|\sum_{I\in\cD}\e\ci{I}|I|(\ssDelta\ci{I}\bff)(\ssDelta\ci{I}\bg)\right|=\sum_{I\in\cD}|I|\cdot|\ssDelta\ci{I}\bff|\cdot|\ssDelta\ci{I}w|\\
&\geq\sum_{n=0}^{N}|I_{n}|\cdot|\ssDelta{\ci{I_{n}}}\bff|\cdot|\ssDelta{\ci{I_{n}}}w|
\sim(1-\beta)^{-1}\sum_{n=0}^{N}2^{n(\beta-1)}\gtrsim (1-\beta)^{-1}M\sim M^2.
\end{align*}
It follows that
\begin{equation*}
\sup_{\e\in\mathcal{E}}\frac{|\La T_{\e}(f\sigma),gw\Ra|}{\Vert f\Vert \ci{L^{p}(\sigma)}\Vert g\Vert \ci{L^{p'}(w)}}\gtrsim_{p}\frac{M^2}{M^{1/p}M^{1/p'}}=M.
\end{equation*}

\subsection{Example for a special type of Haar shift}
\label{s:LS Haar shift}

Let $T$ be the Haar shift on $[0,1)$ acting on functions $f\in L^2([0,1))$ by
\begin{equation*}
Tf=2\sum_{I\in\cD}(\ssDelta\ci{I}f)(h{\ci{I_{+}}}-h{\ci{I_{-}}}).
\end{equation*}
Then, we have
\begin{equation*}
\La Tf,g\Ra=\sum_{I\in\cD}|I|(\ssDelta\ci{I}f)(\ssDelta{\ci{I_{+}}}g-\ssDelta{\ci{I_{-}}}g),
\end{equation*}
for all $f,g\in L^2([0,1))$.

Consider the function $\bff$ on $[0,1)$ given by
\begin{equation*}
\bff=\sum_{n=1}^{\infty}h{\ci{J_{n}}}.
\end{equation*}
Notice that $|\bff|\leq 1$. It is obvious that for all $I\in\cD$, we have $\ssDelta\ci{I}\bff\neq0$ if and only if $I=J_{n}$ for some positive integer $n$, in which case $\ssDelta\ci{I}\bff=1>0$. Consider also the function $\bg=-w$ on $[0,1)$. Consider the functions $f=\bff/\sigma$, $g=\bg/w$ on $[0,1)$. We have
\begin{equation*}
\Vert f\Vert ^{p}\ci{L^{p}(\sigma)}=\left\Vert\frac{1}{\sigma}\right\Vert^{p}\ci{L^{p}(\sigma)}=w([0,1))\sim M,\;\;\;\Vert g\Vert ^{p'}\ci{L^{p'}(w)}=w([0,1))\sim M.
\end{equation*}
Moreover, we have
\begin{align*}
\La f\sigma,T(gw)\Ra=\La \bff,T(\bg)\Ra\geq\sum_{n=0}^{N}|I_{n}|(\ssDelta{\ci{I_{n}}}\bg)(\ssDelta{\ci{J_{n+1}}}\bff)\sim
\sum_{n=0}^{N}(1-\beta)^{-1}2^{n(\beta-1)}\gtrsim M^2.
\end{align*}
It follows that
\begin{equation*}
\frac{\La f\sigma,T(gw)\Ra}{\Vert f\Vert \ci{L^{p}(\sigma)}\Vert g\Vert \ci{L^{p'}(w)}}\gtrsim_{p}\frac{M^2}{M^{1/p}M^{1/p'}}=M.
\end{equation*}

\section{``Small step" constructions}\label{smstep}
\label{s:SM constr}

We describe in this section different variants of ``small step" constructions, that allow us to get from the examples constructed above in Section \ref{s:LS ex}  examples with dyadic smoothness constant arbitrarily close to $1$.

We fix the following notation: for all intervals $J,K$ in $\R$, we denote by $\psi\ci{J,K}$ the unique orientation-preserving affine transformation mapping $J$ onto $K$.

\subsection{A warmup: the ``small step" construction for the Haar multiplier.}
\label{s: SM Haar mult}

Let $p\in(1,\infty)$ and $M>2$. Recall that in Subsection \ref{s:LS Haar mult} we constructed bounded weights $w,\sigma$ on $[0,1)$ with $\sigma=w^{-1/(p-1)}$, such that
\begin{equation*}
M\leq w([0,1))\sigma([0,1))^{p-1},\;\;[w]\ci{A_{p},\cD}\leq 4Me,\;\;w([0,1))\sim M,\;\;\sigma([0,1))\sim_{p}1,
\end{equation*}
and non-zero bounded functions $f\in L^p(w)$, $g\in L^{p'}(\sigma)$ such that 
\begin{align}
\label{main mult 01}
\sup_{\e\in\mathcal{E}}\left| \langle T_{\e} (f\sigma), gw \rangle \right|=\sum_{I\in\cD}|I|\cdot|\ssDelta\ci{I}\bff|\cdot|\ssDelta\ci{I}\bg|\geq c(p) \|f\|\ci{L^p(\sigma)}\|g\|\ci{L^{p'}(w)},
\end{align} 
where $\bff:=f\sigma$ and $\bg:=gw$. Recall that in this example we do not have any control over the dyadic smoothness constants $S_w\ut{d}$ and $S_\sigma\ut{d}$ of the weights $w$ and $\sigma$. 

Based on this example we want to construct weights $\wt{w},\wt{\sigma}$ with $\wt{\sigma}=\wt{w}^{-1/(p-1)}$, and non-zero functions $\wt f \in L^p(\wt w)$, $\wt g\in L^{p'}(\wt\sigma)$ such that \eqref{main mult 01} holds with $\wt f$, $\wt g$, $\wt w$, $\wt \sigma$ in place of $f$, $g$, $w$, $\sigma$ (with another constant $c(p)$); and what is essential, that the dyadic smoothness constants of the new weights are as close to $1$ as we want. 

As we will see, in our construction we will keep track of the averages and martingale differences of the weight $w$, $\sigma$ and of the functions $\bff$ and $\bg$, and their counterparts with tildes.

\subsubsection{A general ``small step" construction}
\label{s: SM Haar mult constr}

We begin by describing a ``small step" construction that does not exploit any intricacies of the particular ``large step" example for Haar multipliers.

Let us first give an informal description. Let $X$ be an 
$\R ^{N}$-valued martingale on $[0,1)$.
Consider the \emph{graph} of the martingale $X$, see Subsection \ref{s:mart-rand walk}. Recall, that the segment of the graph, corresponding to an interval $J\in\cD$ is a straight line segment, connecting points $\La X \Ra\ci{J_-}$ and $\La X \Ra\ci{J_+}$; note that $\La X \Ra\ci{J}$ is the midpoint of this segment.

Take a sufficiently large positive integer $d$. We divide each of the segments of the graph of $X$ in $2d$ parts, so that we get a new graph containing the vertices of the old graph, along with several new vertices, $2\cdot(d-1)$ in number, on each segment, see Figure \ref{segmentdiv}, where new points are marked in red.

\begin{figure}[h]
\centering\includegraphics[scale=0.9]{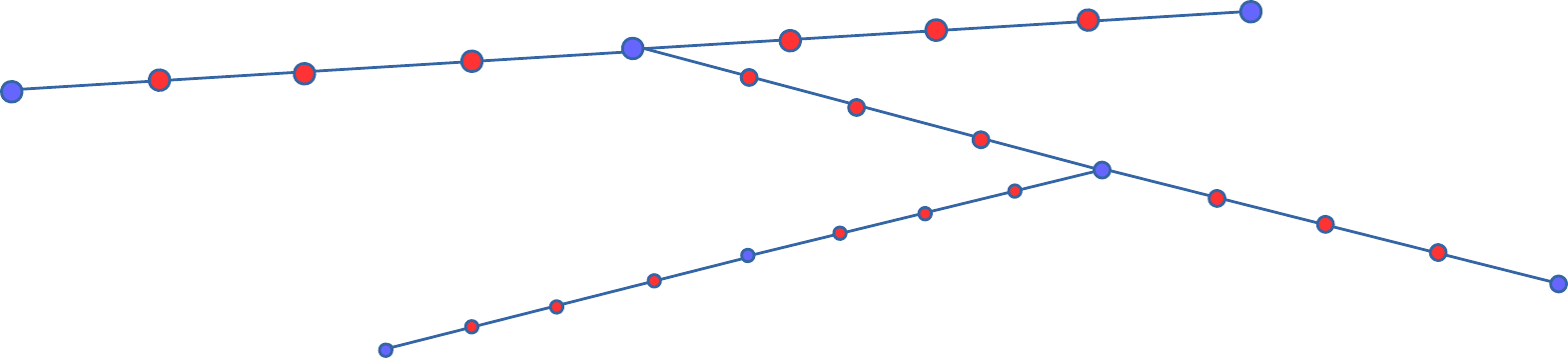}
\caption{Dividing the segments of the graph of $X$}
\label{segmentdiv}
\end{figure}

Let us describe a new random walk on the new graph, which can be thought of as a ``small step" version of the random walk corresponding to the original martingale, producing a new martingale $\wt{X}$.

As in the original random walk, we start from the average $\La X \Ra\ci{[0,1)}$, which, recall, is the midpoint of the segment corresponding to $[0,1)$. From each point $\La X\Ra\ci J$ we perform a ``small step" random walk of order $d$ along the segment corresponding to $J$, moving by $\pm\ssDelta\ci J X/d$ with probability $1/2$. Thus, from each point of the new graph, we move with equal probability $1/2$ to one of the two immediately closest  points (of the new graph) on the corresponding segment (see Figure \ref{segment}). When we reach one of the two endpoints of this segment, we get into a new segment, and we repeat this procedure along the new segment.

\begin{figure}[h]\centering
\includegraphics[scale=1]{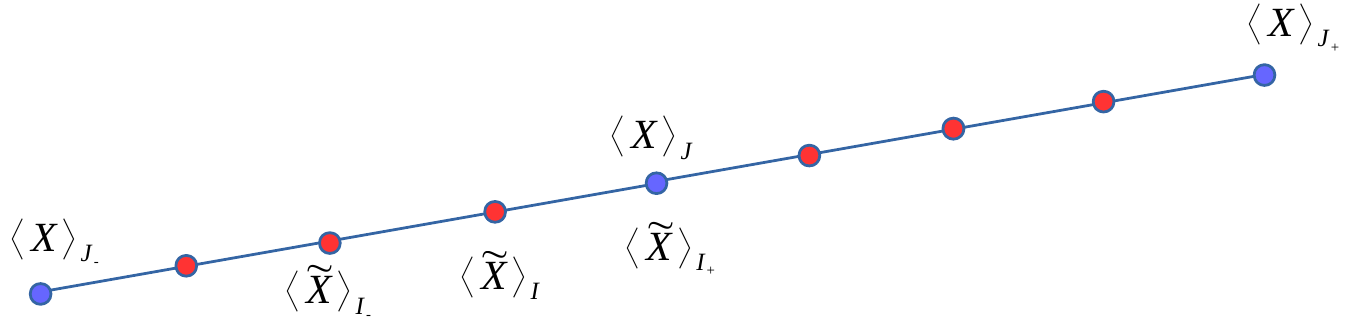}
\caption{Random walk on the new graph}
\label{segment}
\end{figure}

Let us now make all this formal. 
In our case the martingale is always a uniformly bounded one, induced by a function $F\in L^{\infty}([0,1);\R^{N})$; usually in our situation  $N=4$ and $F=(w,\sigma,\bff,\bg)$.  
The construction will be described in terms of the function $F$, so no deep knowledge of probability is required, although the above probabilistic description could help the reader to understand what is going on. 

Given a dyadic subinterval $I$ of $[0,1)$, we define the family $\cS(I)$ of stopping intervals for $I$ as the family of all maximal dyadic subintervals $J$ of $I$ such that
\begin{equation}
\label{stop int}
\bigg|\sum_{\substack{I'\in\cD(I)\\I'\supsetneq J}}h\ci{I'}\bigg|=d,
\end{equation}
and we also define the subset $\cS_{+}(I)$ as the family of all intervals $J$ in $\cS(I)$ for which the sum in \eqref{stop int} is equal to $d$, and similarly we define $\cS_{-}(I)$. Coupled with a translation and rescaling invariance lemma, part (i) of the following lemma implies that the family $\cS(I)$ forms a partition (up to a Borel set of zero measure) of $I$, and part (ii) of it implies that $\bigcup \cS_{+}(I),\bigcup \cS_{+}(I)$ have both measure equal to $|I|/2$.

\begin{lm}\label{l: res rand walks}
Consider the sequence $(r_{n})^{\infty}_{n=1}$ of Rademacher functions on $[0,1)$, i.e.
\begin{equation*}
r_{n}:=\sum_{I\in\cD_{n-1}}h\ci{I},\qquad n=1,2,\ldots.
\end{equation*}
Set $S_0=0$ and $S_{n}=\sum_{k=1}^{n}r_{k}$, for all $n=1,2,\ldots$. Let $a,b\geq0$, not both of them equal to 0. Consider the stopping times $\tau^1,\tau^2,\tau$ given by
\begin{equation*}
\tau^1:=\inf\lbrace n\in\mathbb{N}:\;S_{n}=b\rbrace,\qquad\tau^2:=\inf\lbrace n\in\mathbb{N}:\;S_{n}=-a\rbrace,\qquad\tau:=\min(\tau^1,\tau^2).
\end{equation*}
(i) There holds $\tau^1<\infty$ and $\tau^2<\infty$ a.e. on $[0,1)$.
\newline
(ii) There holds $\mathbb{P}(\tau=\tau^1)=\frac{a}{a+b}$ and $\mathbb{P}(\tau=\tau^2)=\frac{b}{a+b}$.
\end{lm}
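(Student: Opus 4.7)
The lemma is the classical Gambler's Ruin problem for a symmetric simple random walk on $\mathbb{Z}$: since the Rademacher functions $(r_n)$ on $[0,1)$ with Lebesgue measure are i.i.d.\ with distribution $\mathbb{P}(r_n=\pm 1)=1/2$, the partial sums $(S_n)$ form a symmetric simple random walk starting at $0$. The plan is to first establish $\tau<\infty$ almost everywhere on $[0,1)$, then derive (ii) by applying the optional stopping theorem to the bounded martingale $(S_{n\wedge\tau})$, and finally deduce (i) by letting the opposite barrier recede to infinity in (ii).

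For the finiteness of $\tau$, I would use a standard geometric tail bound. Whenever $a+b$ consecutive Rademacher functions all equal $+1$, the walk increases by $a+b$ during that window; since on $\{\tau>n\}$ the value $S_n$ lies in $[-a,b]$, such a run necessarily forces the walk to hit $b$ by the end of the window. By independence of the Rademachers, the probability that none of the first $k$ disjoint windows of length $a+b$ consists entirely of $+1$'s is at most $(1-2^{-(a+b)})^{k}$, so $\mathbb{P}(\tau>k(a+b))\le(1-2^{-(a+b)})^{k}\to 0$ as $k\to\infty$.

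For (ii), the stopped process $(S_{n\wedge\tau})$ is a martingale uniformly bounded by $\max(a,b)$; since $\tau<\infty$ almost everywhere, the dominated form of the optional stopping theorem yields $\mathbb{E}[S_\tau]=\mathbb{E}[S_0]=0$. On the full-measure set $\{\tau<\infty\}$ the random variable $S_\tau$ takes only the two values $b$ and $-a$, attained on the disjoint events $\{\tau=\tau^1\}$ and $\{\tau=\tau^2\}$ respectively (these are disjoint because $b\neq -a$ under the hypothesis $(a,b)\neq(0,0)$). Writing $p_i:=\mathbb{P}(\tau=\tau^i)$, the linear system $bp_1-ap_2=0$ and $p_1+p_2=1$ produces $p_1=a/(a+b)$ and $p_2=b/(a+b)$, which is exactly (ii).

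For (i), assume first $b\ge 1$. For every integer $a'\ge 1$, part (ii) applied to the pair $(a',b)$ gives $\mathbb{P}(\tau^1\le\tau^2_{a'})=a'/(a'+b)$, where $\tau^2_{a'}:=\inf\{n\in\mathbb{N}:S_n=-a'\}$; since $\{\tau^1\le\tau^2_{a'}\}\cap\{\min(\tau^1,\tau^2_{a'})<\infty\}\subseteq\{\tau^1<\infty\}$ and the second event has full measure (by the first step applied to $(a',b)$), letting $a'\to\infty$ yields $\mathbb{P}(\tau^1<\infty)=1$. The finiteness of $\tau^2$ follows symmetrically. The boundary case where one of $a,b$ equals $0$ reduces, by conditioning on $r_1$, to a walk started at $\pm 1$ needing to return to $0$, which is covered by the already-treated nonzero case. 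I do not anticipate a real obstacle; the only mildly delicate point is justifying optional stopping, which is precisely what the geometric tail estimate in the second paragraph is designed for.
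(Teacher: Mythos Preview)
Your proof is correct. Part (ii) is treated identically to the paper: optional stopping for the bounded martingale $(S_{n\wedge\tau})$ gives $\mathbb{E}[S_\tau]=0$, and together with $p_1+p_2=1$ this determines the hitting probabilities.

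The genuine difference lies in part (i), and in the overall order of the argument. The paper proves (i) first and directly, via the exponential martingale $M_n=e^{\theta S_n}/(\cosh\theta)^n$: the stopped process $M^{\tau^1}$ is bounded by $e^{\theta b}$, hence converges; computing the limit yields $\mathbb{E}\bigl[(\cosh\theta)^{-\tau^1}\mathbf{1}_{\{\tau^1<\infty\}}\bigr]=e^{-\theta b}$, and letting $\theta\downarrow 0$ with dominated convergence gives $\mathbb{P}(\tau^1<\infty)=1$. You instead first establish the weaker fact $\tau<\infty$ by an elementary block/run argument, use that to justify (ii), and only then recover the one-sided statement $\tau^1<\infty$ by sending the opposite barrier $a'\to\infty$ in the formula $\mathbb{P}(\tau=\tau^1)=a'/(a'+b)$. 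Your route is more elementary and avoids the exponential martingale entirely; the paper's route is self-contained for each barrier and incidentally produces the Laplace transform of $\tau^1$. Both are standard textbook arguments for the gambler's ruin problem, just drawn from different chapters.
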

The proof of the lemma is given in Subsection \ref{s: rand walks} of the Appedix.

The transformation we describe here acts on functions in $L^{\infty}(I)$ as follows. Let $G\in L^{\infty}(I;\R^{N})$. Then, we define the function $R\ci{I}G:=G\circ\psi\ci{I}$, where $\psi\ci{I}:I\rightarrow I$ is given by
\begin{equation}
\label{SM meas pres trans}
\psi\ci{I}(x)=
\begin{cases}
\psi_{\ci{J,I_{-}}}(x),\text{ if }x\text{ belongs to some } J\in\cS_{-}(I)\\
\psi_{\ci{J,I_{+}}}(x),\text{ if }x\text{ belongs to some }J\in\cS_{+}(I)
\end{cases},
\text{ for almost every }x\in I.
\end{equation}
It is clear that $\psi\ci{I}:I\rightarrow I$ is a measure-preserving transformation.

The ``small step" transform described here is obtained though iterating the above transform in every stopping interval. Namely, we first apply the above construction on the function $F$, along the interval $[0,1)$. We thus obtain a function $R\ci{[0,1)}F\in L^{\infty}([0,1);\R^{N})$. Then, we apply the above transform on the function $(R\ci{[0,1)}F)|\ci{I}$ along the interval $I$, producing new stopping intervals, for all $I\in\cS([0,1))$, and afterwards we repeat this along every stopping interval that will have come up, etc. Therefore, after this process has been completed we will have obtained a new function $\tilde{F}\in L^{\infty}([0,1);\R^{N})$.

It is important to note that in fact this transform (called in what follows ``small step" transform of order $d$) amounts just to a composition of limiting functions with a certain measure-preserving transformation (so in particular, it does not matter whether we apply it to a martingale as a whole or to each of its coordinates separately). Indeed, it is clear that $\tilde{F}=F\circ\Phi$, where $\Phi:[0,1)\rightarrow[0,1)$ is the measure-preserving transformation given at almost every point of $[0,1)$ as the composition of all the measure-preserving transformations $\psi\ci{I}:I\rightarrow I$, where $I$ runs over $[0,1)$ and all stopping intervals containing that point (note that the order of composition respects inclusion of dyadic intervals).

We now specialize to the case $N=4$ and $F=(w,\sigma,\bff,\bg)$. We write then $\tilde{F}=(\wt{w},\wt{\sigma},\wt{\bff},\wt{\bg})$, where tilde denotes just composition with the measure preserving tranformation $\Phi$. In particular, $\tilde{w},\tilde{\sigma}$ are weights on $[0,1)$ with $\wt{w}\wt{\sigma}^{p-1}=1$ a.e. on $[0,1)$.

\subsubsection{Getting the damage}\label{s: SM Haar mult damage}

We first show that the ``small step" transform preserves ``damage" for Haar multipliers.

\begin{lm}\label{l:SM pres}
Let the functions $\bff,\bg,\wt{\bff},\wt{\bg}$ be as above. There holds
\begin{equation*}
\sum_{I\in\cD}|I|\cdot|\ssDelta\ci{I}\wt{\bff}|\cdot|\ssDelta\ci{I}\wt{\bg}|=\sum_{J\in\cD}|J|\cdot|\ssDelta\ci{J}\bff|\cdot|\ssDelta\ci{J}\bg|.
\end{equation*}
\end{lm}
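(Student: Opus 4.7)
The plan is to exploit the random-walk interpretation of the iterated small-step construction, together with two elementary facts about the symmetric simple random walk on $\{-d,\ldots,d\}$: the gambler's-ruin splitting probabilities from Lemma~\ref{l: res rand walks}(ii), and the expected absorption time $\E\tau=d^2$ (by optional stopping applied to $S_n^2-n$).

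First I would organize the iterated construction as a tree of \emph{iteration intervals}: $[0,1)$ is an iteration interval at level $0$, and if $K$ is an iteration interval then each element of $\cS(K)$ is an iteration interval at the next level. Assign to each iteration interval $K$ a \emph{source interval} $I^*_K\in\cD$ in the original dyadic tree by $I^*_{[0,1)}:=[0,1)$ and $I^*_{K'}:=(I^*_K)_\pm$ whenever $K'\in\cS_\pm(K)$. Iterating Lemma~\ref{l: res rand walks}(ii) (with $a=b=d$, so that each step splits mass $1/2,1/2$) yields $\sum_{K:\,I^*_K=I^*}|K|=|I^*|$ for every $I^*\in\cD$. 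Also, on each iteration interval $K$, the function $\wt\bff|_K$ is the pull-back of $\bff|_{I^*_K}$ by an orientation-preserving affine rescaling composed with measure-preserving rearrangements, so $\La\wt\bff\Ra_K=\La\bff\Ra_{I^*_K}$ and $\La\wt\bff\Ra_{K_\pm}=\La\bff\Ra_{(I^*_K)_\pm}$, and analogously for $\bg$.

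Next, call $J\in\cD(K)$ \emph{intermediate in} $K$ if $J=K$ or $J\supsetneq K'$ for some $K'\in\cS(K)$. For such $J$ the value $s:=\sum_{I'\in\cD(K),\,I'\supsetneq J}h_{I'}$ is an integer in $(-d,d)$ constant on $J$, and Lemma~\ref{l: res rand walks}(ii) applied to a walk shifted to start at $s$ (so $a=d-s$, $b=d+s$) yields
\begin{equation*}
\La\wt\bff\Ra_J \;=\; \tfrac{d+s}{2d}\,\La\bff\Ra_{(I^*_K)_+} + \tfrac{d-s}{2d}\,\La\bff\Ra_{(I^*_K)_-} \;=\; \La\bff\Ra_{I^*_K} + \tfrac{s}{d}\,\ssDelta_{I^*_K}\bff.
\end{equation*}
Because any dyadic interval properly contained in $J$ cannot straddle $J$'s midpoint, each of $J_+$ and $J_-$ is a disjoint union of stopping intervals from $\cS(K)$; and since all later iterations act as measure-preserving rearrangements \emph{inside} stopping intervals (preserving their averages), $\La\wt\bff\Ra_{J_\pm}$ is not altered by any subsequent iteration. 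Taking the difference between $J_+$ and $J$ and noting that $s$ increases by exactly $1$, I obtain $\ssDelta_J\wt\bff=(1/d)\,\ssDelta_{I^*_K}\bff$, and analogously $\ssDelta_J\wt\bg=(1/d)\,\ssDelta_{I^*_K}\bg$, with matching sign, since the same measure-preserving $\Phi$ produces both $\wt\bff$ and $\wt\bg$. In particular $|\ssDelta_J\wt\bff|\cdot|\ssDelta_J\wt\bg|=d^{-2}|\ssDelta_{I^*_K}\bff|\cdot|\ssDelta_{I^*_K}\bg|$.

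Finally, every $J\in\cD$ with a nonzero Haar coefficient of $\wt\bff$ or $\wt\bg$ is intermediate in a unique iteration interval $K(J)$. For each $K$ the identity $\sum_{J\text{ intermediate in }K}|J|=|K|\cdot\E\tau=d^2|K|$ holds, since each $x\in K$ belongs to exactly $\tau(x)$ intermediate intervals (the ancestors in $\cD(K)$ of the stopping interval containing $x$, with $K$ itself included). Combining,
\begin{align*}
\sum_{J\in\cD}|J|\cdot|\ssDelta_J\wt\bff|\cdot|\ssDelta_J\wt\bg|
&=\sum_{I^*\in\cD}\frac{|\ssDelta_{I^*}\bff|\cdot|\ssDelta_{I^*}\bg|}{d^2}\sum_{K:\,I^*_K=I^*}\sum_{J\text{ int.\ in }K}|J|\\
&=\sum_{I^*\in\cD}\frac{|\ssDelta_{I^*}\bff|\cdot|\ssDelta_{I^*}\bg|}{d^2}\cdot d^2\,|I^*|\;=\;\sum_{I^*\in\cD}|I^*|\cdot|\ssDelta_{I^*}\bff|\cdot|\ssDelta_{I^*}\bg|,
\end{align*}
which is the claimed identity. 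The main technical subtlety is the invariance of $\ssDelta_J\wt\bff$ under the later iterations, which reduces precisely to the measure-preservation of the small-step transform inside each stopping interval.
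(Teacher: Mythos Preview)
Your proof is correct and follows essentially the same three-step structure as the paper's: reduce to a single iteration level, show that $\ssDelta_J\wt\bff=(1/d)\,\ssDelta_{I^*}\bff$ on the intermediate intervals, and verify $\sum_{J\text{ intermediate in }K}|J|=d^2|K|$. The only cosmetic difference is in the last step: the paper computes this sum as $\|S\|_{L^2}^2$ via Haar orthogonality together with $|S|=d$ a.e., whereas you interpret it as $|K|\,\E\tau$ and invoke optional stopping for $S_n^2-n$; these are two phrasings of the same identity.
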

\begin{proof}
First of all, it is immediate by translation and rescaling invariance that
\begin{equation*}
\sum_{J\in\cS(I_0)}\sum_{K\in\cD(J)}|\ssDelta\ci{K}\hat{\bff}|\cdot|\ssDelta\ci{K}\hat{\bg}|\cdot|K|=\sum_{\substack{J\in\cD(I_0)\\J\neq I_0}}|\ssDelta\ci{J}\bff|\cdot|\ssDelta\ci{J}\bg|\cdot|J|,
\end{equation*}
where $I_0=[0,1)$ and $\hat{\bff}:=R\ci{I_0}\bff,~\hat{\bg}:=R\ci{I_0}\bg$. Therefore, since the transform is given by iteration of the same fundamental transform over $[0,1)$ and all stopping intervals, up to translation and rescaling, it suffices only to verify that
\begin{equation}
\label{SM pres intermediate intervals}
\sum_{K\in\cD(I_0)\setminus\left(\bigcup_{J\in\cS(I_0)}\cD(J)\right)}|\ssDelta\ci{K}\hat{\bff}|\cdot|\ssDelta\ci{K}\hat{\bg}|\cdot|K|=|\ssDelta\ci{I_0}\bff|\cdot|\ssDelta\ci{I_0}\bg|\cdot|I_0|.
\end{equation}
It is easy to verify that
\begin{equation}
\label{SM martingale diff intermediate intervals}
\ssDelta\ci{K}\hat{\bff}=\frac{1}{d}\ssDelta\ci{I_{0}}\bff,\;\forall K\in\cD(I_{0})\setminus\left(\bigcup_{J\in\cS(I_{0})}\cD(J)\right),
\end{equation}
and similarly for $\bg$. It follows that
\begin{align*}
\sum_{K\in\cD(I_{0})\setminus\left(\bigcup_{J\in\cS(I_{0})}\cD(J)\right)}|\ssDelta\ci{K}\hat{\bff}|\cdot|\ssDelta\ci{K}\hat{\bg}|\cdot|K|&=
\frac{1}{d^2}\left(\sum_{K\in\cD(I_{0})\setminus\left(\bigcup_{J\in\cS(I_{0})}\cD(J)\right)}|K|\right)|\ssDelta\ci{I_{0}}\bff|\cdot|\ssDelta\ci{I_{0}}\bg|.
\end{align*}
Therefore, it suffices to verify that
\begin{equation}
\label{SM Haar mult intermediate intervals}
\sum_{K\in\cT(I_0)}|K|=\frac{1}{d^2}|I_{0}|.
\end{equation}
where $\cT(I_0):=\cD(I_{0})\setminus\left(\bigcup_{J\in\cS(I_{0})}\cD(J)\right)$. Consider the limiting function $S=\sum_{K\in \cT(I_0)}h\ci{K}$ (the sum should be understood in both the a.e. on $I_{0}$ and $L^{2}(I_{0})$ senses). By the definition \eqref{stop int} of the stopping intervals for $I_{0}$ we obtain $|S|=d$ a.e. on $I_{0}$. In view of orthogonality of Haar functions, it folllows that
\begin{align*}
\sum_{K\in\cT(I_0)}|K|&=\sum_{K\in\cT(I_0)}\Vert h\ci{K}\Vert\ci{L^{2}(I_{0})}^{2}=\Vert S\Vert\ci{L^{2}(I_{0})}^{2}=d^2|I_{0}|,
\end{align*}
concluding the proof.
\end{proof}

\begin{rem}\label{r:SM max func} Consider the dyadic Hardy-Littlewood maximal functions $M\bff,~M\wt{\bff}$ of $\bff,\wt{\bff}$ respectively. We claim that $M\wt{\bff}\geq(M\bff)\circ\Phi$ a.e. on $[0,1)$.

Indeed, note first that $\wt{|f|}=|f|\circ\Phi=|f\circ\Phi|=|\wt{f}|$, so $|\wt{f}|$ is obtained from $|f|$ through the same ``small step" transform as $\wt{f}$ is obtained through $f$. It suffices now to note that for all $I\in\cD$ and for all $G\in L^{\infty}(I)$ we have 
\begin{equation*}
\langle R\ci{I}G\rangle\ci{J}=\langle G\rangle\ci{I_{\pm}},\;\forall J\in\cS_{\pm}(I).
\end{equation*}
\end{rem}

\subsubsection{Supressing dyadic smoothness constants}\label{s:SM Haar mult dyad smooth}

We next show that the ``small step" construction as given above provides very tight control over dyadic smoothness constants, provided $d$ is large enough.

\begin{lm}\label{s:SM doubl}
Let the weights $w,\wt{w}$ be as above. Given $\e>0$, assume that $d>(S\ut{d}_{w}-1)/\e$. Then, the dyadic smoothness constant $S\ut{d}_{\wt{w}}$ of the weight $\wt{w}$ is less than $1+\e$.
\end{lm}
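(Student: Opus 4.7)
The plan is to compute $\La \tilde w\Ra\ci K$ explicitly for every dyadic $K\subseteq [0,1)$ and deduce the required ratio bound. By the iterative nature of the construction, every $K\in\cD([0,1))$ is either the top interval $I_0 = [0,1)$, an intermediate interval $K\in\cT(I)$, or a stopping interval $K\in\cS(I)$, for some $I$ processed at some level of the iteration. Since the iteration step inside a stopping interval $J\in\cS(I)$ is an exact (translated/rescaled) copy of the first-level construction applied to a weight whose dyadic smoothness constant is again bounded by $S_w\ut{d}$, it suffices to carry out the analysis on $I_0$ and then iterate.

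Set $a = \La w\Ra\ci{I_{0,-}}$ and $b = \La w\Ra\ci{I_{0,+}}$. For each $J\in\cS_\pm(I_0)$, the restriction $\Phi|_J$ factors as $\psi\ci{J, I_{0,\pm}}$ composed with further measure-preserving maps (arising from deeper iterations inside $J$), so $\int_J \tilde w = |J|\cdot\La w\Ra\ci{I_{0,\pm}}$, i.e.\ $\La\tilde w\Ra\ci J = \La w\Ra\ci{I_{0,\pm}}$. For intermediate $K\in\cT(I_0)$, the stopping intervals contained in $K$ partition $K$ up to measure zero, so
$$\La\tilde w\Ra\ci K \;=\; \alpha_K\, b + (1-\alpha_K)\, a, \qquad \alpha_K := \frac{\big|\bigcup\{J\in\cS_+(I_0): J\subseteq K\}\big|}{|K|}.$$
The quantity $n:=\sum_{I'\in\cD(I_0),\; I'\supsetneq K} h\ci{I'}$ is constant on $K$, with $n\in\{-(d-1),\ldots,d-1\}$; it is the current position of the random walk on $K$. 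Applying Lemma~\ref{l: res rand walks}(ii) to this walk (started at $0$ with absorbing boundaries at $+(d-n)$ and $-(d+n)$) yields $\alpha_K = (d+n)/(2d)$, and hence
$$\La\tilde w\Ra\ci K \;=\; \frac{(d+n)\,b + (d-n)\,a}{2d}.$$

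The dyadic children $K_\pm$ of $K$ have walk positions $n\pm 1$, so the above formula gives $\La\tilde w\Ra\ci{K_+} - \La\tilde w\Ra\ci{K_-} = (b-a)/d$. Both $\La\tilde w\Ra\ci{K_\pm}$ are convex combinations of $a$ and $b$ and so lie in $[\min(a,b), \max(a,b)]$. Assuming without loss of generality $b\ge a$, we obtain
$$\frac{\La\tilde w\Ra\ci{K_+}}{\La\tilde w\Ra\ci{K_-}} \;\le\; 1 + \frac{(b-a)/d}{a} \;=\; 1 + \frac{b/a - 1}{d} \;\le\; 1 + \frac{S_w\ut{d}-1}{d} \;<\; 1+\e,$$
where the last inequality uses the hypothesis $d > (S_w\ut{d}-1)/\e$. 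This bound is valid for every intermediate $K\in\cT(I_0)$ (including $I_0$ itself, corresponding to $n=0$), and the iteration transfers it verbatim to every dyadic $K$ at deeper levels. Therefore $S_{\tilde w}\ut{d} < 1+\e$.

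The main technical point is establishing the explicit formula for $\La\tilde w\Ra\ci K$: namely, the identity $\La\tilde w\Ra\ci J = \La w\Ra\ci{I_{0,\pm}}$ on each stopping interval (from the measure-preserving nature of the nested iterations inside $J$), and the identification $\alpha_K = (d+n)/(2d)$ (from Lemma~\ref{l: res rand walks}(ii) applied to the translated walk on $K$). Once the formula is in hand, the ratio estimate collapses to a one-line computation and the lemma follows by iteration over the levels of the construction.
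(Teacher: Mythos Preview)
Your proof is correct and follows essentially the same route as the paper's: reduce by iteration to the first level, observe that each $\La\wt w\Ra\ci K$ for intermediate $K$ is a convex combination of $a=\La w\Ra\ci{(I_0)_-}$ and $b=\La w\Ra\ci{(I_0)_+}$, note that the difference across siblings is $(b-a)/d$, and bound the ratio by $1+(S^{\mathrm d}_w-1)/d$. The only difference is cosmetic: you compute the convex coefficient $\alpha_K=(d+n)/(2d)$ explicitly via the gambler's-ruin Lemma~\ref{l: res rand walks}, whereas the paper bypasses this by invoking directly the identity $\ssDelta\ci K\hat w=(1/d)\,\ssDelta\ci{I_0}w$ (established in the preceding lemma) and only needs the \emph{existence} of some convex combination to bound the denominator from below by $\min(a,b)$.
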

\begin{proof}
First of all, it is immediate by rescaling and translation invariance that for all $I\in\cD$ and for all weights $\rho$ on $I$, the dyadic smoothness constant of the weight $(R\ci{I}\rho)|\ci{J}$ is not larger than $S\ut{d}_{\rho}$, for all $J\in\cS(I)$. Therefore, since the transform is given by iteration of the same fundamental transform over $[0,1)$ and all stopping intervals, up to tranaslation and rescaling, it suffices only to verify that
\begin{equation}
\max\left(\frac{\La\hat{w}\Ra{\ci{K_{-}}}}{\La\hat{w}\Ra{\ci{K_{+}}}},\frac{\La\hat{w}\Ra{\ci{K_{+}}}}{\La\hat{w}\Ra{\ci{K_{-}}}}\right)\leq1+\e,\qquad\forall K\in\cD(I_0)\setminus\left(\bigcup_{J\in\cS(I_0)}\cD(J)\right),
\end{equation}
where $I_0=[0,1)$ and $\hat{w}:=R\ci{[0,1)}w$, provided that $d>(S\ut{d}_{w}-1)/\e$.

Let $K\in\cD(I_0)\setminus\left(\bigcup_{J\in\cS(I_0)}\cD(J)\right)$ be arbitrary. We have $\ssDelta\ci{K}\hat{w}=(1/d)\ssDelta\ci{I_0}w$. Moreover, $K_{+}$ can be written as a union of stopping intervals (up to a set of zero measure), therefore $\langle \hat{w}\rangle\ci{K_{+}}=a\langle w\rangle\ci{(I_0)_{-}}+(1-a)\langle w\rangle\ci{(I_0)_{+}}$, for some $a\in[0,1]$. It follows that
\begin{align*}
\left|\frac{\La\hat{w}\Ra{\ci{K_{-}}}}{\La\hat{w}\Ra{\ci{K_{+}}}}-1\right|=\frac{|\La\hat{w}\Ra{\ci{K_{-}}}-\La\hat{w}\Ra{\ci{K_{+}}}|}{\La\hat{w}\Ra{\ci{K_{+}}}}\leq
\frac{1}{d}\cdot\frac{|\La w\Ra{\ci{(I_0)_{+}}}-\La w\Ra{\ci{(I_0)_{-}}}|}{\min(\La w\Ra{\ci{(I_0)_{+}}},\La w\Ra{\ci{(I_0)_{-}}})}.
\end{align*}
 Without loss of generality, we may assume that $\La w\Ra{\ci{(I_0)_{-}}}\leq\La w\Ra{\ci{(I_0)_{+}}}$ (the other case is symmetric). Then, we have
\begin{align*}
\frac{1}{d}\cdot\frac{|\La w\Ra{\ci{(I_0)_{+}}}-\La w\Ra{\ci{(I_0)_{-}}}|}{\min(\La w\Ra{\ci{(I_0)_{+}}},\La w\Ra{\ci{(I_0)_{-}}})}
=\frac{1}{d}\cdot\frac{\La w\Ra{\ci{(I_0)_{+}}}-\La w\Ra{\ci{(I_0)_{-}}}}{\La w\Ra{\ci{(I_0)_{-}}}}
\leq \frac{1}{d}(S\ut{d}_{w}-1)<\e.
\end{align*}
Similarly $\La\hat{w}\Ra{\ci{K_{+}}}/\La\hat{w}\Ra{\ci{K_{-}}}<1+\e$, concluding the proof.
\end{proof}

\subsubsection{Respecting dyadic Muckenhoupt characteristics}\label{s: SM Haar mult dyad Muck}

We next show that the ``small step" construction does not ruin dyadic Muckenhoupt constants, up to constants depending only on $p$. Namely, we claim that $[\wt{w},\wt{\sigma}]\ci{A_{p},\cD}\leq 2^{p}[w,\sigma]\ci{A_{p},\cD}$. To see that, note first that it immediate from translation and rescaling invariance that for all $J\in\cS(I_0)$ we have $[\hat{w}|\ci{J},\hat{\sigma}|\ci{J}]\ci{A_{p},\cD(J)}\leq[w,\sigma]\ci{A_{p},\cD(I_0)}$, where $I_0:=[0,1)$ and $\hat{w}:=R\ci{[0,1)}w$, $\hat{\sigma}:=R\ci{[0,1)}\sigma$. Therefore, since the transform is given by iteration of the same fundamental transform over $[0,1)$ and all stopping intervals, up to translation and rescaling, it suffices only to verify that
\begin{equation}
\La\hat{w}\Ra\ci{K}\La\hat{\sigma}\Ra\ci{K}^{p-1}\leq 2^{p}[w,\sigma]\ci{A_{p},\cD(I_0)},\qquad\forall K\in\cD(I_0)\setminus\left(\bigcup_{J\in\cS(I_0)}\cD(J)\right).
\end{equation}
Let $K\in\cD(I_0)\setminus\left(\bigcup_{J\in\cS(I_0)}\cD(J)\right)$ be arbitrary. Since $K$ can be written as a union of stopping intervals (up to a set of zero measure), we have $\langle \hat{w}\rangle\ci{K}=a\langle w\rangle\ci{(I_0)_{-}}+(1-a)\langle w\rangle\ci{(I_0)_{+}}$ and $\langle \hat{\sigma}\rangle\ci{K}=a\langle \sigma\rangle\ci{(I_0)_{-}}+(1-a)\langle \sigma\rangle\ci{(I_0)_{+}}$, for some $a\in[0,1]$. Then, the following lemma, whose proof is given in Subsection \ref{s: upper hyperb} in the Appendix, implies immediately the required result.

\begin{lm}
\label{l: upper hyperb}
Let $x_1,y_1,x_2,y_2>0$ and $A>0$, such that
\begin{equation*}
x_1y_1^{p-1},~\left(\frac{x_1+x_2}{2}\right)\left(\frac{y_1+y_2}{2}\right)^{p-1},~x_2y_2^{p-1}\leq A.
\end{equation*}
Then, there holds
\begin{equation*}
(x_1+a(x_2-x_1))(y_1+a(y_2-y_1))^{p-1}\leq 2^{p}A,\;\forall a\in[0,1].
\end{equation*}
\end{lm}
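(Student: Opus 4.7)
My plan is to observe that the assertion follows immediately from just the middle hypothesis, with no need to invoke the constraints at the endpoints. The key (elementary) observation is that for $a \in [0,1]$, the expression $x_1 + a(x_2-x_1) = (1-a)x_1 + a x_2$ is a convex combination of $x_1$ and $x_2$ and is therefore bounded above by $\max(x_1, x_2) \le x_1 + x_2$; the same holds for the $y$-factor.

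Concretely, for every $a \in [0,1]$,
\begin{equation*}
(x_1 + a(x_2-x_1))(y_1 + a(y_2-y_1))^{p-1} \le (x_1+x_2)(y_1+y_2)^{p-1} = 2^p \cdot \frac{x_1+x_2}{2}\left(\frac{y_1+y_2}{2}\right)^{p-1} \le 2^p A.
\end{equation*}
This is all the argument one needs.

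Since the bound is so cheap, the only real ``obstacle'' is checking that the constant $2^p$ is the right one and cannot be substantially improved without extra hypotheses; but the statement as given in the lemma only asks for $2^p A$, so the above chain of inequalities concludes the proof. Note in particular that neither the first nor the third hypothesis ($x_1 y_1^{p-1} \le A$, $x_2 y_2^{p-1} \le A$) is actually used; the middle hypothesis alone suffices. This reflects the fact that, in the application to dyadic Muckenhoupt characteristics in Subsection \ref{s: SM Haar mult dyad Muck}, the only information needed at the parent interval $I_0$ is the single $A_p$ bound $\La w\Ra\ci{I_0}\La\sigma\Ra\ci{I_0}^{p-1} \le [w,\sigma]\ci{A_p,\cD(I_0)}$, which corresponds precisely to the midpoint of the segment joining $(\La w\Ra\ci{(I_0)_-},\La \sigma\Ra\ci{(I_0)_-})$ and $(\La w\Ra\ci{(I_0)_+},\La \sigma\Ra\ci{(I_0)_+})$.
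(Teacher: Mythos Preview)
Your proof is correct and is in fact cleaner than the paper's. The paper first splits into cases: when $x_1,x_2$ and $y_1,y_2$ are ordered the same way, the function $a\mapsto((1-a)x_1+ax_2)((1-a)y_1+ay_2)^{p-1}$ is monotone, so its maximum is attained at an endpoint and the endpoint hypotheses $x_iy_i^{p-1}\le A$ give the bound immediately. In the remaining case (say $x_1>x_2$, $y_1<y_2$) the paper normalizes by setting $x=(x_1-x_2)/(x_1+x_2)$, $y=(y_2-y_1)/(y_1+y_2)$, $B=A\big/\big(\tfrac{x_1+x_2}{2}\big)\big(\tfrac{y_1+y_2}{2}\big)^{p-1}\ge1$, and then bounds $(1-sx)(1+sy)^{p-1}\le2\cdot2^{p-1}=2^p\le2^pB$ for $s\in[-1,1]$.

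But this last inequality is exactly your observation $(1-a)x_1+ax_2\le x_1+x_2$ (and likewise for $y$) in disguise, and your version handles all cases at once without any case distinction or change of variables. You also correctly note that only the midpoint hypothesis is actually used; the paper's argument invokes the endpoint hypotheses in the monotone cases, but as your proof shows this is unnecessary. In the application in Subsection~\ref{s: SM Haar mult dyad Muck} all three hypotheses happen to be available anyway, so nothing is lost either way, but your argument is the more economical one.
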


\subsubsection{Respecting weighted norms}\label{s: SM Haar mult weighted norms}

Finally, we show that weighted norms do not get larger. Consider the function $\wt{g}=\wt{\bg}/\wt{w}$. Obviously $\wt{g}=g\circ\Phi$. It follows that
\begin{equation}
\label{SM Haar mult weighted norms}
\Vert\wt{g}\Vert^{p'}\ci{L^{p'}(\wt{w})}=\int_{[0,1)}|g(\Phi(x))|^{p'}w(\Phi(x))dx=\int_{[0,1)}|g(x)|^{p'}w(x)dx=\Vert g\Vert^{p'}\ci{L^{p}(w)}.
\end{equation}
Similarly $\Vert\wt{f}\Vert^{p}\ci{L^{p}(\wt{\sigma})}=\Vert f\Vert^{p}\ci{L^{p}(\sigma)}$, where $\wt{f}=\wt{\bff}/\wt{\sigma}$.

\subsection{The ``small step" construction for Haar shifts}
\label{s:SM Haar shift}

In this section, we describe one variant of the ``small step" construction of the previous subsection which exploits the special structure of the martingales in the example of Subsection \ref{s:LS Haar shift}

Let $p\in(1,\infty)$ and $M>2$. Recall the Haar shift $T$ on $[0,1)$ considered in Subsection \ref{s:LS Haar shift}:
\begin{equation*}
Tf:=2\sum_{I\in\cD}(\ssDelta\ci{I}f)(h{\ci{I_{+}}}-h{\ci{I_{-}}}).
\end{equation*}
Then, we have
\begin{equation*}
\La Tf,g\Ra=\sum_{I\in\cD}|I|(\ssDelta\ci{I}f)(\ssDelta{\ci{I_{+}}}g-\ssDelta{\ci{I_{-}}}g),\;\forall f,g\in L^2([0,1)).
\end{equation*}

Let us first recall the ``large step" example of Subsection \ref{s:LS Haar shift}. Set $I_{0}=[0,1)$ and $I_{n}=\left[0,\frac{1}{2^{n}}\right),\; J_{n}=\left[\frac{1}{2^{n}},\frac{1}{2^{n-1}}\right)$, for all $n=1,2,\ldots$. Recall that in Subsection \ref{s:LS Haar shift} we showed that there exist bounded weights $w,\sigma$ on $[0,1)$ with $\sigma=w^{-1/(p-1)}$,
\begin{equation*}
M\leq w([0,1))\sigma([0,1))^{p-1},\qquad[w]\ci{A_{p},\cD}\leq 4Me,\qquad w([0,1))\sim M,\qquad\sigma([0,1))\sim_{p}1,
\end{equation*}
with the additional properties $\ssDelta\ci{I}w=\ssDelta\ci{I}\sigma=0$, for all $I\in\cD\setminus\lbrace I_0,I_1,I_2,\ldots\rbrace$, $\ssDelta\ci{I_{l}}w\leq0$, for all $l=0,1,2,\ldots$, and nonzero bounded functions $f\in L^{p}(\sigma)$, $g\in L^{p'}(w)$ with
\begin{equation}
\label{Haar shift initial estimate}
\La f\sigma,T(gw)\Ra\gtrsim_{p} M\Vert f\Vert \ci{L^{p}(\sigma)}\Vert g\Vert \ci{L^{p'}(w)}.
\end{equation}
We recall that $g=-\1\ci{[0,1)}$, so $\bg:=gw=-w$. Moreover, for the function $\bff:=f\sigma$ on $[0,1)$ we have $\langle f\rangle\ci{[0,1)}=0$, and for all $I\in\cD$ we have $\ssDelta\ci{I}\bff\neq0$ if and only if $I=J_{n}$ for some positive integer $n$, in which case $\ssDelta\ci{I}\bff>0$.

Based on this example we want to construct weights $\wt{w},\wt{\sigma}$ with $\wt{\sigma}=\wt{w}^{-1/(p-1)}$, and non-zero functions $\wt f \in L^p(\wt w)$, $\wt g\in L^{p'}(\wt\sigma)$ such that \eqref{Haar shift initial estimate} holds with $\wt f$, $\wt g$, $\wt w$, $\wt \sigma$ in place of $f$, $g$, $w$, $\sigma$. Again, it will be essential that the dyadic smoothness constants of the new weights are as close to $1$ as we want. This new example will be used to obtain a ``small step'' example for the Hilbert transform in Subsection \ref{s:smooth Hilb}. For reasons to become apparent there, we will want the martingale differences of the function $\wt{\bg}:=\wt{g}\wt{w}$ over dyadic intervals of \emph{odd} generation to vanish. Thus, we cannot just mimic naively the ``small step'' construction of the previous subsection.

\subsubsection{``Small step" random walk on a triangle}
\label{s: Rand triang}

Consider the $\R^{4}$-valued martingale $X$ induced by the function $F=(w,\sigma,\bg,\bff)$. Then, we have $\ssDelta\ci{I}X=0$ for all $I\in\cD$ different from $I_0,I_1,I_2,\ldots$ and $J_1,J_2,J_3,\ldots$. Notice that the vectors $\ssDelta\ci{I_n}X,~\ssDelta\ci{(I_n)_{+}}X$ are either linearly independent (in fact orthogonal to each other), or one of them is equal to 0, for all $n=0,1,2,\ldots$. Therefore, the random walk corresponding to the four-dimensional martingale $X$ takes place on the ``union" of a family of isosceles triangles in $\R^{4}$ (maybe degenerate) as in Figure \ref{trianglegraph}, corresponding to the intervals $I_{0},I_{1},I_{2},\ldots$ respectively.

\begin{figure}[h]\centering
\includegraphics[scale=0.85]{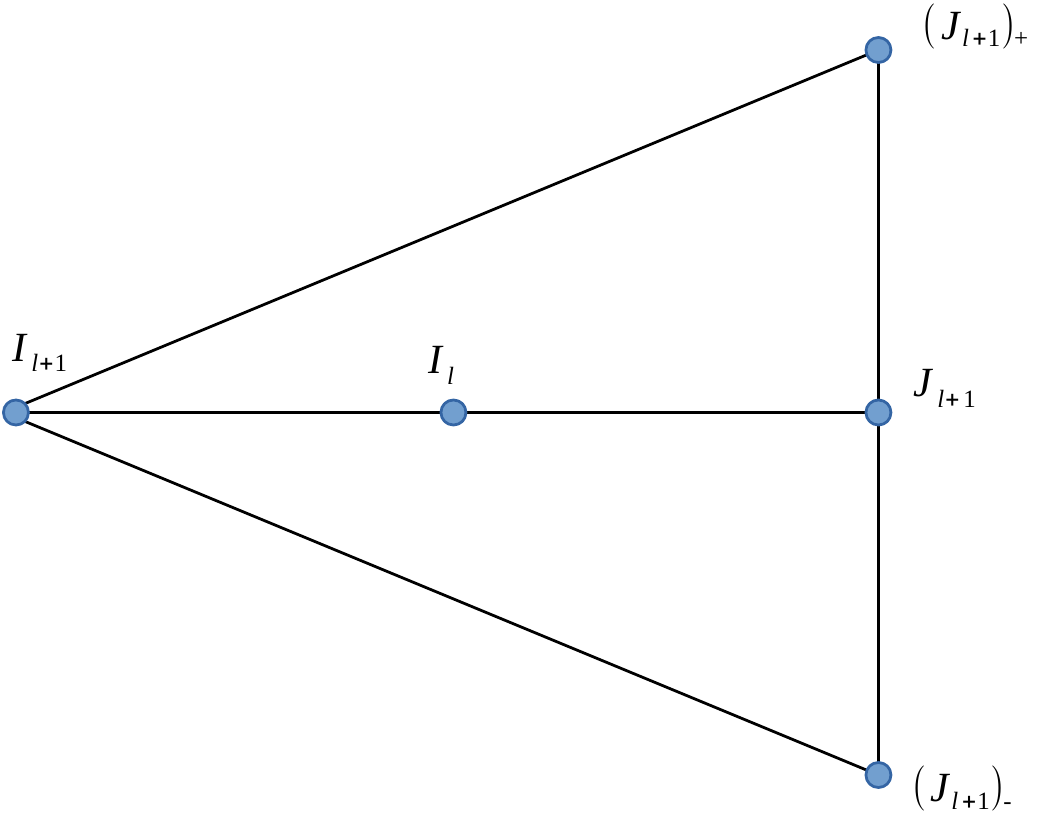}
\caption{The triangle corresponding to interval $I_l$}
\label{trianglegraph}
\end{figure}

Starting with the interval $I_0=[0,1)$, we replace the constant function $X_0\equiv\langle X\rangle\ci{I_0}$ with the function $X^1=\langle X\rangle\ci{I_0}+(\ssDelta\ci{I_0}X)h\ci{I_0}+(\ssDelta\ci{(I_0)_{+}}X)h\ci{(I_0)_{+}}$. This function is constant on $(I_0)_{-}=I_{1}$ and the children $(I_0)_{+-}=(J_0)_{-},~(I_0)_{++}=(J_0)_{+}$ of $(I_0)_{+}=J_0$. In each of the children of $(I_0)_{+}$, we just stop, i.e. the function $F$ is constant there, while in the interval $(I_0)_{-}=I_1$ we repeat this procedure, starting with the constant function $X^1|\ci{I_1}$, and using the martingale differences of $X$ over $I_1,~(I_1)_{+}$ this time, and then we repeat the same pattern in the interval $(I_1)_{-}=I_2$, etc. So the random walk corresponding to $X$ consists of rescaled and translated copies of the same pattern, independent from each other. Our main object now is to replace the term $(\ssDelta\ci{I_n}X)h\ci{I_n}+(\ssDelta\ci{(I_n)_{+}}X)h\ci{(I_n)_{+}}$ by a linear combination of Haar functions with ``smaller" coefficients, reflecting a ``small step" random walk, for all $n=0,1,2,\ldots$.

Choose a sufficiently large positive integer $d>100$. Condider the model triangle on $\R^2$ with vertices $-e_1,~e_1+e_2$ and $e_1-e_2$, where $e_1=(1,0)$ and $e_2=(0,1)$. Given a dyadic subinterval $I$ of $[0,1)$ of even generation, we can describe a random walk in $I$ as follows. Starting with the constant function taking value $c\ci{[0,1)}=0\in\R^2$, we replace it with the function $(1/d)h\ci{I}e_1+(1/2d)h\ci{I_{+}}e_2$. Notice that the latter function is constant on grandchildren on $I$. We then repeat the same pattern in the grandchildren of $I$, and we repeat again this pattern in the grandchildren of the latter intervals, etc. The pattern continues until for some interval $J$ which will have arisen as a grandchild during this process, the current constant value $c\ci{J}$ on $J$ is located on the boundary of the triangle. We will say that such intervals $J$ are preliminary stopping intervals. In particular, the preliminary stopping intervals are of even generation. Denote the family of all preliminary stopping intervals by $\wt{\cS}(I)$.

If $J$ is a preliminary stopping interval such that the constant value $c\ci{J}$ on $J$ is located on a side of the model triangle other than its base (that is the vertical side of the triangle), then we replace the constant function $c\ci{J}$ on $J$ with the function $c\ci{J}+(1/d)h\ci{J}e_1\pm(1/2d)h\ci{J}e_2$, where $\pm=+$, respectively $\pm=-$, if $c\ci{J}$ is located on the upper, respectively lower, side of the model triangle. Then we repeat this in the grandchildren of $J$, and then we repeat the pattern in the grandchildren of the latter intervals, etc. The pattern continues until for some interval $K$ which will have arisen as a grandchild during this process, the current constant value $c\ci{K}$ on $K$ is located on one of the three vertices of the triangle. We will say that such intervals $K$ are stopping intervals. In particular, these stopping intervals are of even generation.

If $J$ is a preliminary stopping interval such that the constant value $c\ci{J}$ on $J$ is located on the base of the triangle, then we replace the constant function $c\ci{J}$ on $J$ with the function $c\ci{J}+(1/2d)h\ci{J}e_2$. Then we repeat this in the grandchildren of $J$, and then we repeat the pattern in the grandchildren of the latter intervals, etc. The pattern continues until for some interval $K$ which will have arisen as a grandchild during this process, the current constant value $c\ci{K}$ on $K$ is located on one of the two vertices of the base. We will also say that such intervals $K$ are stopping intervals. In particular, these stopping intervals are of even generation.

We will denote the family of all stopping intervals by $\cS(I)$. We will also denote the family of all stopping intervals $J$ such that $c\ci{J}$ is located on the vertex (i.e. $-e_1$) opposite to the base of the model triangle, respectively on the upper vertex (i.e. $e_1+e_2$) of the base, respectively on the lower vertex (i.e. $e_1-e_2$) of the base, by $\cS_{-}(I)$, respectively by $\cS_{++}(I)$, respectively by $\cS_{+-}(I)$. We also set $\cS_{+}(I)=\cS_{++}(I)\bigcup\cS_{+-}(I)$. We will call the elements of $\cS_{-}(I)$, respectively $\cS_{+}(I)$, left, respectively right, stopping intervals.

Given now a function $G\in L^{\infty}(I;\R^{4})$, the variant of the ``small step" trasform we are describing here maps it to the function $R\ci{I}G:=G\circ\psi\ci{I}$, where (compare with \eqref{SM meas pres trans})
\begin{equation}
\label{SM triangle meas pres trans}
\psi\ci{I}(x)=
\begin{cases}
\psi\ci{J,I_{-}}(x),\text{ if }x\in J\text{ for some }J\in\cS_{-}(I)\\
\psi\ci{J,I_{+\pm}}(x),\text{ if }x\in J\text{ for some }J\in\cS_{+\pm}(I)
\end{cases}
,\forall x\in I.
\end{equation}
The symmetries of the walk imply that $\psi\ci{I}:I\rightarrow I$ is measure preserving.

The variant of the ``small step" transform described here is obtained through iterating the above fundamental transform as follows. We first apply the above construction on the function $F$, along the interval $[0,1)$. We thus obtain a function $R\ci{[0,1)}F\in L^{\infty}[0,1);\R^{4})$. In each interval in $\cS_{+}(I)$, we just stop (recall that the original function $F$ is constant on the children on $I_0$), while we apply the above transform on the function $(R\ci{[0,1)}F)|\ci{I}$ along the interval $I$, for all $I\in\cS_{-}([0,1))$, and then we stop on every right stopping interval that will have come up, while we repeat the same transform along every left stopping interval that will have come up, etc. Therefore, after this process has been completed we will have obtained a new function $\tilde{F}\in L^{\infty}([0,1);\R^4)$.

Recall that the original function $F$ is constant on the children on $(I_n)_{+}$, for all $n=0,1,2,\ldots$. Note also that $I_{n+1}=(I_n)_{-}$, for all $n=0,1,2,\ldots$. It follows that $\tilde{F}=F\circ\Phi$, where $\Phi:[0,1)\rightarrow[0,1)$ is the measure-preserving transformation given at almost every point of $[0,1)$ as the composition of all the measure-preserving transformations $\psi\ci{I}:I\rightarrow I$, where $I$ runs over $[0,1)$ and all left stopping intervals containing that point (note that the order of composition respects inclusion of dyadic intervals). We write $\wt{F}=(\wt{w},\wt{\sigma},\wt{\bg},\wt{\bff})$, where tilde denotes just composition with the measure preserving transformation $\Phi$. 

Notice that $I_0$ is an interval of even generation, so its grandchilren are also of even generation, etc. In is then clear that the functions $\wt{w},\wt{\sigma}$ are in fact obtained from the functions $w,\sigma$ respectively thought ``small step" transform of order $d$ as in the previous section, but ``skipping" intervals of odd generations (i.e. omitting the Haar functions corresponding to them). This means that dyadic intervals $I$ of odd generation ``do not split", i.e. $\La \wt{w}\Ra\ci{I}=\La \wt{w}\Ra\ci{I_{-}}=\La \wt{w}\Ra\ci{I_{+}}$, and similarly for $\wt{\sigma}$. It is clear that this will be only a minor modification of the construction described in Subsection \ref{s: SM Haar mult}. In particular, $\wt{w},~\wt{\sigma}$ are weights on $[0,1)$ with $\wt{w}\wt{\sigma}^{p-1}=1$ a.e. on $[0,1)$, and for large enough $d$ the weights $\wt{w},~\wt{\sigma}$ will possess the required dyadic Muckenhoupt characteristic and dyadic smoothness properties.

\subsubsection{Getting the damage}\label{s: SM Haar shift damage}

We show that the ``small step" transform we just described preserves damage for the Haar shift $T$, i.e. that $\La\wt{\bff},T(\wt{\bg})\Ra\gtrsim\La \bff,T(\bg)\Ra$.

\begin{lm} \label{l: SM triangle Haar shift damage}
Let the functions $\bff,\bg,\wt{\bff},\wt{\bg}$ be as above. There holds
\begin{equation*}
\sum_{I\in\cD}(\ssDelta\ci{I}\wt{\bg})(\ssDelta\ci{I_{+}}\wt{\bff}-\ssDelta\ci{I_{-}}\wt{\bff})|I|\gtrsim\sum_{J\in\cD}(\ssDelta\ci{J}\bg)(\ssDelta\ci{J_{+}}\bff-\ssDelta\ci{I_{-}}\bff)|J|.
\end{equation*}
\end{lm}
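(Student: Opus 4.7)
The plan is to mimic the proof template of Lemma~\ref{l:SM pres}, adapted to the ``triangle'' variant of the small-step transform used here. By translation and rescaling invariance, each left-stopping interval carries an identical (rescaled) triangle copy of the one rooted at $I_0$, so the left-hand sum decomposes as a disjoint sum over the copies sitting at $I_0,I_1,I_2,\ldots$. On the right-hand side, recall that the only nonzero summands come from $J=I_n$, $n=0,1,2,\ldots$, contributing $(\ssDelta\ci{I_n}\bg)(\ssDelta\ci{J_{n+1}}\bff)|I_n|$. Hence it suffices to show that, for each fixed $n$, the damage accumulated inside the triangle copy rooted at $I_n$ is bounded below by an absolute constant times $(\ssDelta\ci{I_n}\bg)(\ssDelta\ci{J_{n+1}}\bff)|I_n|$.

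I would next classify the dyadic intervals $J$ arising inside a single copy according to the role they play in the triangle walk: type (A) interior roots, where the step adds $(1/d)h\ci{J}e_1+(1/(2d))h\ci{J_+}e_2$; type (B) slanted-side roots, where the step adds $(1/d)h\ci{J}e_1\pm(1/(2d))h\ci{J}e_2$, with both Haar functions supported on $J$ itself; and type (C) base roots, where only $(1/(2d))h\ci{J}e_2$ is added. For type (B) intervals both Haar functions live on $J$ itself, so $\ssDelta\ci{J_+}\wt\bff$ and $\ssDelta\ci{J_-}\wt\bff$ both vanish at this step and the corresponding term drops out; for type (C) intervals no $e_1$ component is added, so $\ssDelta\ci{J}\wt\bg=0$ and the term again vanishes. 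Therefore only type (A) roots can contribute to the damage sum.

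For each type (A) root $J$, the construction gives
\[
\ssDelta\ci{J}\wt\bg=\tfrac{1}{d}\ssDelta\ci{I_n}\bg,\qquad \ssDelta\ci{J_+}\wt\bff=\tfrac{1}{2d}\ssDelta\ci{J_{n+1}}\bff,\qquad \ssDelta\ci{J_-}\wt\bff=0,
\]
with matching signs, because the transform adds the same (unsigned) Haar coefficient at every interior root, identified with the fixed directions $\ssDelta\ci{I_n}X$ and $\ssDelta\ci{J_{n+1}}X$. Summing over $\mathcal{A}(I_n)$, the family of type (A) roots in the copy at $I_n$, yields a total contribution of
\[
\tfrac{1}{2d^{2}}(\ssDelta\ci{I_n}\bg)(\ssDelta\ci{J_{n+1}}\bff)\sum_{J\in\mathcal{A}(I_n)}|J|.
\]

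It remains to prove $\sum_{J\in\mathcal{A}(I_n)}|J|\gtrsim d^{2}|I_n|$. Interpreted probabilistically, this sum equals $|I_n|$ times the expected number of rounds $\tau$ that the triangle walk spends in the interior of the model triangle before first hitting its boundary, starting at the center. Since the model triangle has diameter $\sim 1$ and the jump size is $\sim 1/d$, an optional-stopping argument with the test function $V(x,y)=x^{2}+y^{2}$ (whose conditional increment along the walk is a constant multiple of $1/d^{2}$), together with the observation that the triangle's boundary stays uniformly away from the center, gives $E[\tau]\gtrsim d^{2}$, completely analogous to the classical gambler's ruin estimate behind Lemma~\ref{l: res rand walks}. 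The main technical obstacle is the sign bookkeeping: because the triangle walk is asymmetric (the $e_2$ coin is tossed only on right-going branches) one has to check that the type (A) contributions add coherently rather than cancel; this is verified directly from the construction, as the added Haar functions $(1/d)h\ci{J}e_1$ and $(1/(2d))h\ci{J_+}e_2$ are always aligned with the fixed directions $\ssDelta\ci{I_n}X$ and $\ssDelta\ci{J_{n+1}}X$.
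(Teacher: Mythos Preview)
Your proof is correct and follows essentially the same route as the paper. The paper also reduces, via translation and rescaling, to a single triangle copy and observes that only the even-generation ``interior'' intervals (your type (A)) contribute; it then shows $\sum_{I\in\wt{\cT}\ti{e}(I_0)}|I|\gtrsim d^2$ by writing $\sum|I|\sim\Vert S\Vert^2_{L^2}$ via Haar orthogonality for $S=\sum_I(h_I e_1+\tfrac12 h_{I_+}e_2)$ and noting that $|S|\geq d/\sqrt{5}$ pointwise since $S$ lands on the boundary of the rescaled triangle. Your optional-stopping computation with $V(x,y)=x^2+y^2$ is the probabilistic restatement of exactly this $L^2$ identity: the constant conditional increment $E[V_{n+1}-V_n\mid\cF_n]=9/(8d^2)$ is what Haar orthogonality encodes, and the lower bound $V_\tau\geq 1/5$ is the same distance-to-boundary observation.
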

\begin{proof}
First of all, it is immediate by translation and rescaling invariance that
\begin{equation*}
\sum_{J\in\cS(I_0)}\sum_{K\in\cD(J)}(\ssDelta\ci{K}\hat{\bg})(\ssDelta\ci{K_{+}}\hat{\bff}-\ssDelta\ci{K_{-}}\hat{\bff})|K|=\sum_{\substack{J\in\cD(I_0)\\J\neq I_0,(I_0)_{+}}}(\ssDelta\ci{J}\bg)(\ssDelta\ci{J_{+}}\bff-\ssDelta\ci{J_{-}}\bff)|J|,
\end{equation*}
where $I_0=[0,1)$ and $\hat{\bff}:=R\ci{[0,1)}\bff$, $\hat{\bg}:=R\ci{[0,1)}\bg$. Note also that $\ssDelta\ci{(I_0)_{+}}\bg=0$. Therefore, as in Lemma \ref{l:SM pres}, it suffices only to prove the following analog of \eqref{SM pres intermediate intervals}:
\begin{equation}
\label{SM triangle damage intermediate intervals}
\sum_{I\in\cT(I_0)}(\ssDelta\ci{I}\hat{\bg})(\ssDelta\ci{I_{+}}\hat{\bff}-\ssDelta\ci{I_{-}}\hat{\bff})|I|\gtrsim(\ssDelta\ci{I_{0}}\bg)(\ssDelta\ci{(I_0)_{+}}\bff-\ssDelta\ci{(I_0)_{-}}\bff)|I_0|,
\end{equation}
where $\cT(I_0):=\cD(I_0)\setminus\left(\bigcup_{J\in\cS(I_0)}\cD(J)\right)$. Notice that there is an implied absolute constant in the inequality in \eqref{SM triangle damage intermediate intervals}, unlike \eqref{SM pres intermediate intervals}, where there was just equality. This is no problem (for instance, there will not be accummulation of this constant), since the transform is given by iteration of the same fundamental transform over $[0,1)$ and all left stopping intervals, up to translation and rescaling (essentially, the iterative nature of the transform and translation and rescaling invariance imply that one needs only to verify the damage inside each triangle separately, and these verifications are independent from each other).

First of all, notice that only intervals in $\wt{\cT}(I_0):=\cD(I_0)\setminus\left(\bigcup_{J\in\wt{\cS}(I_0)}\cD(J)\right)$ that are of even generation may contribute to the sum in \eqref{SM triangle damage intermediate intervals}, and for each such interval $I$ we have $\ssDelta\ci{I}\hat{\bg}=(1/d)\ssDelta\ci{I_0}\bg$, $\ssDelta\ci{I_{+}}\hat{\bff}=(1/2d)\ssDelta\ci{(I_0)_{+}}\bff$ and $\ssDelta\ci{I_{-}}\hat{\bff}=0=\ssDelta\ci{(I_0)_{-}}\bff$. Therefore, it suffices to check that
\begin{equation}
\label{SM triangle pres check even}
\sum_{I\in\wt{\cT}\ti{e}(I_0)}|I|\gtrsim d^2,
\end{equation}
where $\wt{\cT}\ti{e}(I_0)$ is the family of all intervals in $\wt{\cT}(I_0)$ that are of even generation. Orthogonality of Haar functions yields
\begin{align*}
\sum_{I\in\wt{\cT}\ti{e}(I_0)}|I|\sim\sum_{I\in\wt{\cT}\ti{e}(I_0)}\left\Vert h\ci{I}e_1+\frac{1}{2}h\ci{I_{+}}e_2\right\Vert\ci{L^{2}(I_0;\R^2)}^2=\Vert S\Vert\ci{L^{2}(I_0;\R^2)}^2,
\end{align*}
where we are considering the limiting function $S:=\sum_{I\in\wt{\cT}\ti{e}(I_0)}\left(h\ci{I}e_1+\frac{1}{2}h\ci{I_{+}}e_2\right)$ (the sum should be understood in both the pointwise a.e. on $I_0$ and $L^2(I_0;\R^2)$ senses). Rescaling the canonical triangle by $d$ we see that this limiting function is taking values on the boundary of the triangle in $\R^2$ with vertices $(-d,0),~(0,d),~(0,-d)$. Since the distance of the origin from the boundary of this triangle is $d/\sqrt{5}$, we obtain $|S|\geq d/\sqrt{5}$, therefore $\Vert S\Vert\ci{L^{2}(I_0;\R^2)}^2\gtrsim d^2$, concluding the proof.
\end{proof}

\subsubsection{Respecting weighted norms}\label{s: SM Haar shift weighted norms}

Identically to \eqref{SM Haar mult weighted norms} we have $\Vert\wt{g}\Vert^{p'}\ci{L^{p'}(\wt{w})}=\Vert g\Vert^{p'}\ci{L^{p'}(w)}$ and $\Vert \wt{f}\Vert ^{p}\ci{L^{p}(\wt{\sigma})}=\Vert f\Vert ^{p}\ci{L^{p}(\sigma)}$, where $\wt{g}:=\wt{\bg}/\wt{w}$ and $\wt{f}:=\wt{\bff}/\wt{\sigma}$.

\section{Iterated remodeling}\label{s:remodel}

In this section we describe the method of iterated remodeling, which is a variant of the powerful method of remodeling, introduced by F. Nazarov in \cite{Nazarov}.

Throughout this section, for all intervals $I,J$ we denote by $\psi\ci{I,J}$ the unique orientation-preserving affine transformation mapping $I$ onto $J$. 

\subsection{Periodisations} \label{s:period}

Let $f\in L^{\infty}([0,1);\R^{n})$. For a given interval $I$ and for a given positive integer $N$, we define the periodisation $\Pi\ci{I}^{N}f$ of $f$ of frequency $N$ over $I$ as the unique periodic function over $I$ of period $\frac{|I|}{2^{N}}$ consisting of $2^{N}$ repeated copies of the function $f$, i.e. $\Pi\ci{I}^{N}f=f\circ\psi\ci{I}^{N}$, where $\psi\ci{I}^{N}(x)=\psi\ci{J,I}(x)$ for all $x\in J$, for all $J\in\ch^{N}(I)$, see Figure \ref{periodization} (here we abuse the terminology regarding the use of the term ``frequency'').

\begin{figure}[h]
\centering
\includegraphics[scale=0.3]{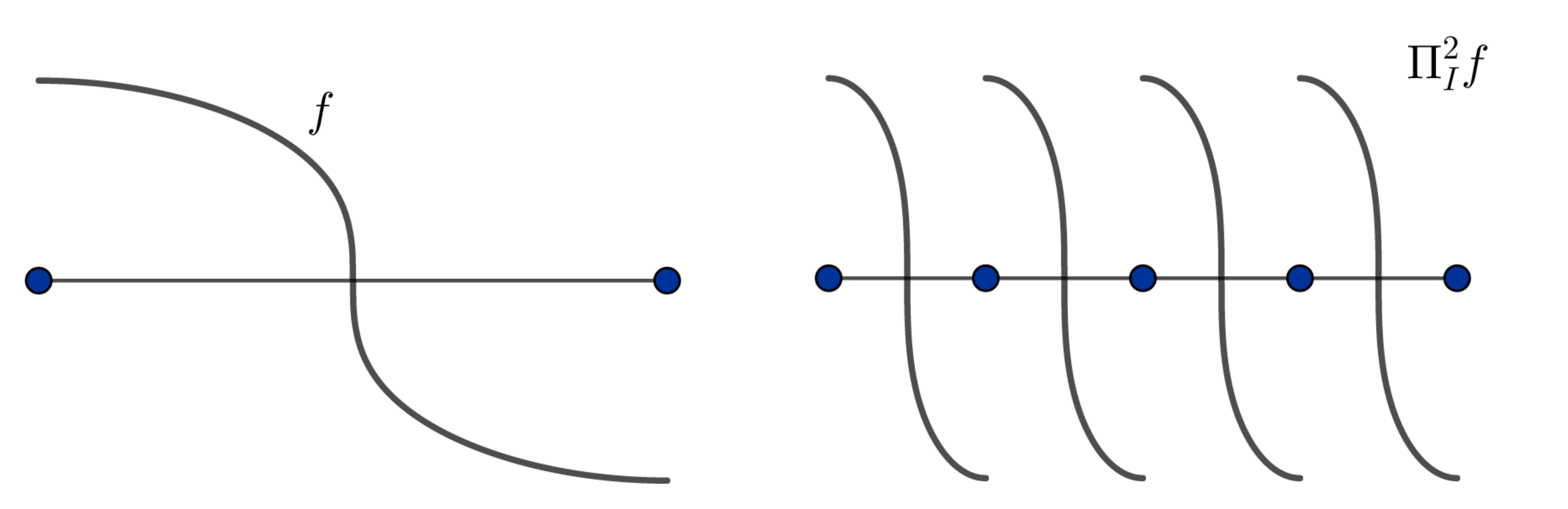}
\caption{Periodization $\Pi^{2}\ci{I}f$ of function $f$}
\label{periodization}
\end{figure}

Note that $\psi\ci{I}^{N}:I\rightarrow I$ is measure preserving. We define the family $\cE\ci{N}(I)$ of exceptional stopping intervals for $I$ of order $N$ as the family of all intervals in $\ch^{N}(I)$ that touch the boundary of $I$ (so $\cE\ci{N}(I)$ has exactly two elements), and the family $\cR\ci{N}(I)$ of regular stopping intervals for $I$ of order $N$ as the family of all intervals in $\ch^{N}(I)$ that do not touch the boundary of $I$.

\subsection{From Bourgain's localizing trick to Nazarov remodeling and iterated remodeling}

F. Nazarov's method of remodeling \cite{Nazarov} had been inspired by a new technique for localizing the action of operators introduced by J. Bourgain in \cite{Bourgain}. There, Bourgain showed that UMD property for a Banach space $X$ follows from the boundedness of the Hilbert transform over $L^{p}(\mathbb{T};X)$ for all $1<p<\infty$, where $\mathbb{T}$ denotes the unit circle. Bourgain related estimates for the $L^{p}$ norm of the Hilbert transform, a non-localized operator, to estimates for the $L^{p}$ norm of the square function, a well-localized operator, through the trick of iteratively replacing portions of functions with their periodisations. 

Bourgain's \cite{Bourgain} basic idea was the following. Given a function $f\in L^2(\mathbb{T})$, one can consider its Fourier series
\begin{equation}
\label{Fourier series}
f(0)+\sum_{m\in\mathbb{Z}\setminus\lbrace0\rbrace}\hat{f}(m)z^{m}.
\end{equation}
One way to make the action of a bounded in $L^2([0,1))$ operator on $f$ localized is to create very ``large gaps'' in expansion \eqref{Fourier series}, by considering the function $\wt{f}$ with Fourier series
\begin{equation*}
\wt{f}=f(0)+\sum_{m\in\mathbb{Z}\setminus\lbrace0\rbrace}\hat{f}(m)z^{mN_{m}},
\end{equation*}
where the $N_{m}$'s are large enough positive integers chosen through an inductive procedure. Here one exploits the fact that $(z^{N})_{N=0}^{\infty}$ converges weakly in (say)  $L^2([0,1))$ to 0 as $n\rightarrow\infty$. Note that the ``transformed'' Fourier series is still a Fourier series.

Given now an $X$-valued function $f$ (say bounded) on $[0,1))$ (we freely identify $\mathbb{T}$ with $[0,1)$), one can consider its martingale difference decomposition in $L^2([0,1);X)$:
\begin{equation}\label{martingale decomposition}
f=\La f\Ra\ci{[0,1)}+\sum_{I\in\cD}\Delta\ci{I}f.
\end{equation}
In general, when the Hilbert transform acts on $f$ its action will not be localized, i.e. there will be interactions between martingale differences over different intervals. One could then think of attempting to somehow introduce very large ``gaps'' in \eqref{martingale decomposition}, inspired from the respective situation in Fourier series. This is not directly possible, and instead on has to notice that the idea in the setting of Fourier series was to replace each $z^{m}$ with $(z^{N_{m}})^{m}$, which is a just a periodisation of $z^{m}$. Then one notices that the periodisations of a given martingale difference converge weakly to 0 in (say) $L^2$ as the frequency increases (see Lemma \ref{l:osc}). Therefore, one can attempt to replace each martingale difference in \eqref{martingale decomposition} with a periodisation of it. The frequencies would be chosen large enough through an inductive procedure. Note that the ``transformed'' martingale difference decomposition should be still a martingale difference decomposition, thus the periodised martingale differences should still somehow respect the hierarchy of dyadic intervals.

Bourgain \cite{Bourgain} not only came up with the above intuition, but also found a sleek way to make it precise. Namely, given an $X$-valued function $f$ (say bounded) on the unit interval $I_0:=[0,1)$, one begins by choosing a frequency $N(I_0)$ and replacing $f$ with its periodisation $\wt{f}^1:=\Pi\ci{I_0}^{N(I_0)}f$. Consider the collection $\cS^1:=\ch^{N(I_0)}(I_0)$. Note that
\begin{equation}
\label{periodised martingale differences}
\mathbb{E}_{\ch(\cS^1)}[\wt{f}^1]-\La f\Ra\ci{I_0}=\Pi\ci{I_0}^{N(I_{0})}(\Delta\ci{I_0}f).
\end{equation}

Then, for all $I\in\ch(\cS^1)$, one can replace the function $\wt{f}^1|\ci{I}$ with a periodisation $\Pi\ci{I}^{N(I)}(\wt{f}^1|\ci{I})$ of it over $I$, for some choice of frequency $N(I)$. After this has been completed for every interval in $\ch(\cS^1)$, one will have obtained a new function $\wt{f}^2$ and a new collection of intervals $\cS^{2}:=\bigcup_{I\in\ch(\cS^1)}\ch^{N(I)}(I)$. Then, one can repeat this process in each of the intervals in $\ch(\cS^2)$ for the function $\wt{f}^2$, etc.

One finally obtains a new function $\wt{f}$. Note that this function is given as a composition of $f$ with a certain measure-preserving transformation (depending only on the choices of the frequencies), basically because each step in the iterative procedure amounted to composing with a measure-preserving transformation. Notice also that the choices of frequencies of each step of periodisation are separated from each other, so one has really complete freedom in performing them.

It is important to note that the function $\wt{f}$ can be obtained as the limit (in any reasonable sense) of a sequence of \textit{averaged} periodisations $\mathbb{E}\ci{\ch(\cS^1)}[\wt{f}^1],~\mathbb{E}\ci{\ch(\cS^2)}[\wt{f}^2],~\mathbb{E}\ci{\ch(\cS^3)}[\wt{f}^3],\ldots$, enabling us to keep track of the averages of the new function. It is also essential to note that since the iterative scheme consists in an iteration of the same fundamental construction (that of replacing by a periodisation), up to translating and rescaling, one deduces that an appropriately rescaled and translated version of \eqref {periodised martingale differences} will hold for each iteration over every interval in $\ch(\cS^1),~\ch(\cS^2),~\ch(\cS^3),\ldots$, namely
\begin{equation*}
\mathbb{E}\ci{\ch(\ch^{N(I)}(I))}[\wt{f}^{k+1}]-\La\wt{f}^{k}\Ra\ci{I}=\Pi\ci{I}^{N(I)}(\Delta\ci{I}\wt{f}^{k}),\;\forall I\in\ch(\cS^{k}),\;\forall k=1,2,\ldots,
\end{equation*}
so each difference $\mathbb{E}\ci{\ch(\cS^{k+1})}[\wt{f}^{k+1}]-\mathbb{E}\ci{\ch(\cS^{k})}[\wt{f}^{k}]$ can be written as a sum of periodisations of the martingale differences of $f$ over the intervals in $\ch^{k}([0,1))$. Thus $\wt{f}$ satisfies the original intuition. It is also worth noting that for the purpose of just obtaining estimates it is not necessary to go all the way down to $\wt{f}$, one can stop only after a finite number of steps.

J. Bourgain's technique in \cite{Bourgain} works really well in the unweighted setting of Banach space valued estimates, but in situations of weighted estimates, such as the setup of Sarason's conjecture, it has the drawback that in general it gives no control over strong dyadic smoothness of weights, even if the original weights are dyadically smooth, basically because it gives no control over averages taken over consecutive dyadic intervals, so it is not well-suited for problems involving fattened $A_p$ characteristics. In order to overcome this difficulty, F. Nazarov \cite{Nazarov} came up with the idea of ``keeping endpoints'', as a means of controlling intervals touching each other.

Namely, one replaces $f$ with with the function $\wt{f}^1$ which is equal to $\Pi\ci{I_0}^{N(I_0)}f$ on each interval in $\ch^{N(I_0)}(I_0)$ not touching the boundary of $I_0$, but equal to just the average $\La f\Ra\ci{I_0}=\La \Pi\ci{I_0}^{N(I_0)}f\Ra\ci{J}$ over each interval $J\in\ch^{N(I_0)}(I_0)$ that touches the boundary of $I_0$. Moreover, one considers the collection $\cS^1$ of intervals in $\ch^{N(I_0)}(I_0)$ that do not touch the boundary of $I_0$, and simply forgets the ones that touch it.

Then, one follows the same iterative scheme as above, always putting averages over intervals touching the boundary, and then forgetting those intervals. One has again complete freedom in choosing the frequencies, and this allowed F. Nazarov to reduce the estimate of the norm of the Hilbert transform over a \emph{weighted} $L^2$ space to estimating the norm of the square function over the same weighted $L^2$ space. Just as before, $\wt{f}$ can be realised as the limit of the sequence of averaged counterparts $\mathbb{E}\ci{\ch(\cS^1)}[\wt{f}^1],~\mathbb{E}\ci{\ch(\cS^2)}[\wt{f}^2],~\mathbb{E}\ci{\ch(\cS^3)}[\wt{f}^3],\ldots$. The latter sequence allowed F. Nazarov to deduce that this process, termed by F. Nazarov remodeling, produces (as will be explained below in \ref{s: smooth Haar mult strong dyad smooth}) strongly dyadically smooth weights, provided that the original weights are dyadically smooth, precisely because original averages are put in intervals that touch the boundary. Of course, one can again stop only after a finite number of steps.

Although F. Nazarov's remodeling from \cite{Nazarov} behaves really well with respect to smoothness, it has the drawback that the new functions are not given just as composition of the original functions with a certain measure-preserving transformation (as was the case in Bourgain's technique \cite{Bourgain}) due to putting averages over intervals touching the boundary and then forgetting these intervals. As a consequence, one-weight situations of weights $w,\sigma$ satisfying $w\sigma^{p-1}=1$ a.e. on $[0,1)$, as the ones that we are primarily interested in here, will in general be transformed to two-weight situations of weights $\wt{w},\wt{\sigma}$ not satisfying any such relation. To overcome this difficulty and at the same time preserve smoothness, one has essentially to not just forget the intervals that touch the boundary, but rather apply again remodeling in them, and do the same for all intervals touching the boundary that will ever come up. Thus, one can say that one has to apply \textit{iterated} remodeling.

We also note that if one is interested in estimates for the norm of the Hilbert transform over weighted $L^{p}$ spaces for any $1<p<\infty$ (not just $p=2$), then one cannot just reduce the estimate of this norm to the estimate of the norm of the square function or the Haar multiplier over the same weighted space, but rather one has to use some other slightly more complicated Haar shift, like the one introduced in Subsection \ref{s:LS Haar shift}:
\begin{equation*}
Tf:=2\sum_{I\in\cD}(\ssDelta\ci{I}f)(h\ci{I_{+}}-h\ci{I_{-}}).
\end{equation*}
This will force us to move one generation deeper during remodeling, that is to consider grandchildren rather than just children of intervals in $\cS^1,\cS^2,\ldots$, essentially because this Haar shift involves interaction of intervals with their children. We emphasize (and it will become clear in Subsection \ref{s:smooth Haar mult}) that for the purpose of obtaining examples just for dyadic operators (e.g. Haar multipliers, dyadic maximal function) one can use just children of intervals. The reduction of the estimate for the Hilbert transform to that for the special Haar shift of Subsection \ref{s:LS Haar shift} is done in Subsection \ref{s:smooth Hilb}.

\subsection{The iterative construction}\label{s:iterated remodel}

We now describe in detail iterated remodeling.

Let $X$ be a uniformly bounded $\R ^n$-valued martingale on $[0,1)$, induced by a function $F\in L^{\infty}([0,1);\R^{n})$ (one should again think here of the special case of weighted estimates, where $n=4$ and $X$ is induced by the bounded function $F=(w,\sigma,\bff,\bg)$, where $\bff:=f\sigma$ and $\bg:=gw$).

Set $I_0:=[0,1)$ and $\wt{F}^{0}:=F$. Pick a frequencly $N(I_0)$ and replace $F$ with the function $\tilde{F}^{I_0}:=\Pi\ci{I_0}^{N(I_0)}F$. We can consider a family $\cR\ci{N(I_0)}(I_0)$ of regular stopping intervals (intervals not touching the boundary) and a family $\cE\ci{N(I_0)}(I_0)$ of exceptional stopping intervals (intervals touching the boundary).

Then, for all $J\in\cE\ci{N(I_0)}(I_0)$, we do the same thing in $J$ for the function $(\tilde{F}^{I_{0}})|\ci{J}=F\circ\psi\ci{J,I_0}$, with respect to some new choice of frequency $N(J)$, obtaining a family $\cR\ci{N(J)}(J)$ of regular stopping intervals and a family $\cE\ci{N(J)}(J)$ of exceptional stopping intervals. We afterwards repeat this in each new exceptional stopping interval that will have come up, etc. We continue this until the entire $I_0$ has been covered, up to a Borel set of zero measure, by regular stopping intervals. We note that this will happen because the sum of the measures of the exceptional stopping intervals decays at each step at least geometrically with ratio 1/2.

After this process has been completed, we will have obtained a new function $\wt{F}^1$. We denote by $\cS^1$ the family of all regular stopping intervals that will have been collected during this procedure. We also denote by $\hat{\cS}^1$ the family of all exceptional stopping intervals that will have been collected during this procedure, together with $I_0$. We define the starting intervals of order 1 as all elements of the family $\hat{\cS}^1$. Note that the elements of $\cS^1$ are pairwise disjoint and $\bigcup\cS^1=I_0$ up to a Borel set of zero measure. Note also that $\wt{F}^1|\ci{I}=F\circ\psi\ci{I,I_0}$, for all $I\in\cS^1$.

For the next step, we repeat the same procedure in the interval $I$ and for the function $\wt{F}^1|\ci{I}$, for all $I\in\ch^{2}(\cS^1)$ (and not just $\ch(\cS^1)$). Here we note that $\wt{F}^1|\ci{I}=F\circ\psi\ci{I,J}$ for some \emph{grandchild} $J$ of $I_0$, for all $I\in\ch^{2}(\cS^1)$. After this has been completed for all intervals in $\ch^{2}(\cS^1)$, we will have obtained a new function $\tilde{F}^2\in L^{\infty}([0,1);\R^n)$. We denote by $\cS^2$ the family of all regular stopping intervals that will have been collected during this step. Moreover, we denote by $\hat{\cS}^2$ the family of all new exceptional stopping intervals that will have been collected during this step, together with all intervals in $\ch^2(\cS^1)$. We define the starting intervals of order 2 as all elements of the family $\hat{\cS}^2$.

Afterwards, we repeat the same procedure along the interval $I$ and for the function $\tilde{F}^2|\ci{I}$, for all $I\in\ch^{2}(\cS^2)$, etc.

After this process has been completed, we will have obtained a sequence of functions $\wt{F}^1,\wt{F}^2,\wt{F}^3,\ldots$ and a new function $\wt{F}\in L^{\infty}([0,1);\R^{n})$.

\subsubsection{Measure-preserving transformation} \label{s:Measure-preserving transformation} It is important to note that this process of iterated remodeling amounts just to composition of limiting functions with a certain measure-preserving transformation that depends only on the choice of frequencies. Indeed, is is clear that for all $l=0,1,2,\ldots$, there exists a measure-preserving transformation $\Psi_{l}:[0,1)\rightarrow[0,1)$ such that $\wt{F}^{l}=\wt{F}^{l-1}\circ\Psi_{l}$, for all $l=1,2,\ldots$. Then, we have $\wt{F}=F\circ\Psi$, where $\Psi:[0,1)\rightarrow[0,1)$ is the measure-preserving transformation given at almost every point of $[0,1)$ as the composition of these measure-preserving transformations $\Psi_\1\circ\Psi_2\circ\Psi_3\circ\ldots$. Note that $\Psi$ depends only on the choices of frequecies $N(I),\; I\in\hat{\cS}:=\bigcup_{k=1}^{\infty}\hat{\cS}^{k}$.

So in particular, it does not matter whether we apply iterated remodeling with respect to a given choice of frequencies to a martingale as a whole or to each of its coordinates separately with respect to the same choice of frequencies.

\subsubsection{Averaged counterparts}\label{s:averaged counterparts} Note that the inductive procedure will have also produced the families $\cS^1,\cS^2,\cS^3,\ldots$ of all  regular stopping intervals that will have been collected during the first, second, third etc respectively step, and the families $\hat{\cS}^1,\hat{\cS}^2,\hat{\cS}^3,\ldots$ of all starting intervals of order $1,2,3,\ldots$ respectively. Then, one can realise $\wt{F}$ as the limit, say, pointwise a.e. on $[0,1)$ and in $L^2([0,1);\R^{n})$, of the sequence of averaged counterparts $\wt{X}^0,\wt{X}^1,\wt{X}^2,\ldots$, where $\wt{X}^{0}:=X_0\equiv\La F\Ra\ci{I_0}$ and $\wt{X}^{k}:=\mathbb{E}\ci{\ch^{2}(\cS^{k})}[\wt{F}^{k}]$, for all $k=1,2,\ldots$.

\begin{rem}\label{r:pres aveg}
It is clear that for all $k=1,2$ we have $\mathbb{E}\ci{\ch^{k}(\cS^1)}[\wt{F}]=\mathbb{E}\ci{\ch^{k}(\cS^1)}[\wt{F}^1]=X_{k}\circ\Psi$, where recall that $X_{k}=\mathbb{E}\ci{\ch^{k}(I_0)}[F]$. Since the iterative scheme consists in an iteration of the same fundamental construction, up to translating and rescaling, we deduce that
\begin{equation*}
\mathbb{E}\ci{\ch^{k}(\cS^{l})}[\wt{F}]=X_{2l+k}\circ\Psi,\qquad\forall k=1,2,\qquad\forall l=1,2,\ldots,
\end{equation*}
where $X_{2l+k}=\mathbb{E}\ci{\ch^{2l+k}(I_0)}[F]$. In particular, the family of all averages of $\wt{F}$ over dyadic intervals coincides with the family of all averages of $F$ over dyadic intervals. This had been noted in \cite[\S 10]{Nazarov}.
\end{rem}

\subsubsection{Martingale difference decomposition} We now provide a description for the martingale difference decomposition of the function $\wt{F}$. Note here that the iterative scheme involved considering \emph{grandchildren} of $\cS^1,\cS^2,\ldots$, rather than just children. This means that the martingale difference decomposition of $\wt{F}$ will involve periodisations of \emph{second order} martingale differences of $F$, and not just of  martingale differences of $F$ (unlike Bourgain's \cite{Bourgain} construction and Nazarov's \cite{Nazarov} constructions). At the same time, the fact that we do distinguish between intervals that touch the boundary and intervals that do not means that these periodisations will extend only over intervals that do not touch the boundary, so there will be \emph{quasi-periodisations} rather than just periodisations (like Nazarov's \cite{Nazarov} construction, but unlike Bourgain's \cite{Bourgain} construction).

Namely, define the second order martingale difference $\Delta^{2}\ci{I}f$ of a function $f\in L^{\infty}(I;\R^{n})$ over an interval $I$ by
\begin{equation}
\label{second order martingale difference}
\Delta^{2}\ci{I}f:=\mathbb{E}\ci{\ch^{2}(I)}f-\La f\Ra\ci{I}\1\ci{I}=\Delta\ci{I}f+\sum_{J\in\ch(I)}\Delta\ci{J}f.
\end{equation}
Moreover, given a frequency $N$, define the \emph{averaged quasi-periodisation} $\overline{\text{Q}\Pi}\ci{I}^{N}f$ of $f$ of frequency $N$ over $I$ as the function $\overline{\text{Q}\Pi}\ci{I}^{N}f:=\mathbb{E}\ci{\cE\ci{N}(I)\cup\ch^{2}(\cR\ci{N}(I))}[\Pi\ci{I}^{N}f]$, i.e.
\begin{equation}
\label{averaged quasi-periodisation}
\overline{\text{Q}\Pi}\ci{I}^{N}f(x)=\
\begin{cases}
(\mathbb{E}\ci{\ch^2(I)}[f]\circ\psi\ci{J,I})(x),\text{ if }x\text{ belongs to some }J\in\cR\ci{N}(I)\\\\
\La f\Ra\ci{I},\text{ if }x\text{ belongs to some }J\in\cE\ci{N}(I)
\end{cases}
.
\end{equation}
Note that
\begin{equation}
\label{one-step remodel martingale diff}
\overline{\text{Q}\Pi}\ci{I}^{N}f-\langle f\rangle\ci{I}\1\ci{I}=\overline{\text{Q}\Pi}\ci{I}^{N}(\Delta^{2}\ci{I}f).
\end{equation}
(notice that $\Delta^{2}\ci{I}f$ is \emph{constant} on the grandchildren of $I$). It is clear that
\begin{equation*}
\wt{F}^1(x)=\Pi\ci{J}^{N(J)}(F\circ\psi\ci{J,I_0})(x),\qquad\forall x \in\bigcup\cR\ci{N(J)}(J),\qquad\forall J\in\hat{\cS^1}.
\end{equation*}
Therefore, we deduce
\begin{align}
\label{from previous to next averaged term}
\wt{X}^{1}-\wt{X}^{0}=\sum_{J\in\hat{\cS}^{1}}(\overline{\text{Q}\Pi}\ci{J}^{N(J)}(F\circ\psi\ci{J,I_0})-\La F\Ra\ci{I_0}\1\ci{J}),
\end{align}
which implies
\begin{equation}
\label{from previous to next averaged term martingale differences}
\wt{X}^1-\wt{X}^{0}=\sum_{J\in\hat{\cS}^1}\overline{\text{Q}\Pi}\ci{J}^{N(J)}(\Delta^{2}\ci{J}(F\circ\psi\ci{J,I_0}))=
\sum_{J\in\hat{\cS}^1}\overline{\text{Q}\Pi}\ci{I_0}^{N(J)}(\Delta^{2}\ci{I_0}F)\circ\psi\ci{J,I_0}.
\end{equation}
For all $J\in\hat{\cS^1}$, we call the function $D\ci{J}F:=\overline{\text{Q}\Pi}\ci{J}^{N(J)}(\Delta^{2}\ci{J}(F\circ\psi\ci{J,I_0}))$ contribution of the starting interval $J$ to the martingale difference decomposition of $\wt{F}$.

We emphasize again that the iterative scheme consists in an iteration of the same fundamental construction, up to translating and rescaling. Therefore, an appropriately rescaled and translated copy of \eqref{from previous to next averaged term martingale differences} will hold for each iteration over every interval in the collections  $\ch^2(\cS^1),~\ch^2(\cS^2),\ldots$. Therefore, one can write
\begin{equation*}
\wt{X}^{k+1}-\wt{X}^{k}=\sum_{J\in\hat{\cS}^{k+1}}D\ci{J}F,
\end{equation*}
where for all $J\in\hat{\cS}^{k+1}$ we have $D\ci{J}F=\overline{\text{Q}\Pi}\ci{J}^{N(J)}(\Delta^{2}\ci{I}(F\circ\psi\ci{J,I}))$ for some $I\in\ch^{2k}(I_0)$, for all $k=1,2,\ldots$. The reason for the ``$2k$'' is again that at the $(k+1)$-th step we repeat the same fundamental process inside each grandchild of each regular stopping interval of the $k$-th step. In perticular
\begin{equation}
\label{iterated remodeling martingale difference decomposition}
\wt{F}=\La F\Ra\ci{[0,1)}+\sum_{J\in\hat{\cS}}D\ci{J}F
\end{equation}
in $L^2([0,1);\R^{n})$, where $\hat{\cS}:=\bigcup_{k=1}^{\infty}\hat{\cS}^{k}$ is the family of all starting intervals.

\begin{rem}\label{r:end av}

Note that $\La \overline{\text{Q}\Pi}\ci{I}^{N}f\Ra\ci{J}=\La f\Ra\ci{I}$, for all dyadic subintervals $J$ of $I$ that touch its boundary. In particular $\La \overline{\text{Q}\Pi}\ci{I}^{N}(\Delta^{2}\ci{I}f)\Ra\ci{J}=0$, for all dyadic subintervals $J$ of $I$ that touch its boundary.

This observation, coupled with \eqref{from previous to next averaged term martingale differences} and a simple inductive argument yields that for all $k=0,1,2\ldots$, the average of $\wt{X}^{k}$ over every dyadic interval that touches the boundary of $[0,1)$ is equal to $\La F\Ra\ci{[0,1)}$. It follows that the average of $\wt{F}$ over every dyadic interval that touches the boundary of $[0,1)$ is equal to $\La F\Ra\ci{[0,1)}$.
\end{rem}

\section{The case of dyadic models}\label{s:smooth dyadic models}

In this section we apply iterated remodeling to obtain examples for dyadic models with weights possessing the required smoothness.

\subsection{Estimate for Haar multipliers}\label{s:smooth Haar mult}

Let $p\in(1,\infty)$. Let $M>2$. Let $\delta>0$ be arbitrarily small. Recall the Haar multiplier $T_{\e}$ corresponding to any choice of signs $\e=(\e\ci{I})_{\ci{I\in\cD}}$:
\begin{equation*}
T_{\e}f:=\sum_{I\in\cD}\e\ci{I}(\ssDelta\ci{I}f)h\ci{I}.
\end{equation*}
Recall that in Subsection \ref{s: SM Haar mult} we constructed bounded weights $w\sigma$ on $[0,1)$ with $\sigma=w^{-1/(p-1)}$ and
\begin{equation*}
M\leq[w]\ci{A_{p},\cD},\;\;\La w\Ra{\ci{[0,1)}}\La\sigma\Ra{\ci{[0,1)}}^{p-1}\leq 2^{p}4eM
\end{equation*}
and $S\ut{d}_{w},~S\ut{d}_{\sigma}\leq 1+\delta$, and non-zero bounded functions $f\in L^{p}(\sigma),~g\in L^{p'}(w)$, such that for the functions $\bff=f\sigma,~\bg=gw$ there holds
\begin{equation}
\frac{\sup_{\e}|\La T_{\e}(f\sigma),gw\Ra|}{\Vert f\Vert \ci{L^{p}(\sigma)}\Vert g\Vert \ci{L^{p'}(w)}}=\frac{\sum_{I\in\cD}|I|\cdot |\ssDelta\ci{I}\bff|\cdot|\ssDelta\ci{I}\bg|}{\Vert f\Vert \ci{L^{p}(\sigma)}\Vert g\Vert \ci{L^{p'}(w)}}\gtrsim_{p} M.
\end{equation}
We apply the iterated remodeling transform on the martingale induced by the function $(w,\sigma,\bff,\bg)$, for an arbitrary choice of frequencies. As it had been observed in \ref{s:Measure-preserving transformation}, this is the same as applying the iterated remodeling transform separately to each of the functions $w,\sigma,\bff,\bg$, for the same choice of frequencies. Then, the new martingale is induced by the function $(\wt{w},\wt{\sigma},\wt{\bff},\wt{\bg})$, where tilde denotes just composition with the mesure preserving-transformation $\Psi:[0,1)\rightarrow[0,1)$ of \ref{s:Measure-preserving transformation}. Then $\wt{\sigma},\wt{w}$ are weights on $[0,1)$ with $\wt{\sigma}=\wt{w}^{-1/(p-1)}$ a.e. on $[0,1)$.

\subsubsection{Respecting dyadic Muckenhoupt constants}\label{s: smooth Haar mult dyadic Muck}

Remark \ref{r:pres aveg} shows that for all $I\in\cD$ there exists $J\in\cD$ (depending only on the choices of frequencies) such that $\La \wt{w}\Ra\ci{I}=\La w\Ra\ci{J}$ and $\La \wt{\sigma}\Ra\ci{I}=\La \sigma\Ra\ci{J}$. It follows immediately that $[\wt{w}]\ci{A_{p},\mathcal{D}}=[w]\ci{A_{p},\mathcal{D}}$.

\subsubsection{Dominating strong dyadic smoothness via dyadic one}\label{s: smooth Haar mult strong dyad smooth}

Let $\e>0$. Assume that $\delta$ is small enough, so that $(1+\delta)^{3}\leq1+\e$. We claim that $S\ut{sd}_{\wt{w}}\leq1+\e$. Indeed, let $X$ be the martingale induced by the function $w$. Recall from \ref{s:averaged counterparts} that $\wt{w}$ is realized as the limit of the sequence of averaged counterparts $\wt{X}^{0},\wt{X}^{1},\wt{X}^{2},\ldots$. Recall the expression \eqref{from previous to next averaged term}:
\begin{align*}
\wt{X}^{1}-\wt{X}^{0}=\sum_{J\in\hat{\cS}^{1}}(\overline{\text{Q}\Pi}\ci{J}^{N(J)}(w\circ\psi\ci{J,I_0})-\La w\circ\psi\ci{J,I_0}\Ra\ci{J}\1\ci{J}).
\end{align*}
Note that the function $\wt{X}^{0}$ is constant, so $S\ut{sd}\ci{\wt{X}^{0}}=1$, and also that $S\ut{d}_{w}\leq 1+\delta$ by construction. Then, the following lemma, proved by F. Nazarov in \cite[\S 10]{Nazarov}, shows that $S\ut{sd}\ci{\wt{X}^{1}}\leq 1+\e$. Induction then gives $S\ut{sd}\ci{\wt{X}^{k}}\leq 1+\e$, for all $k=0,1,2,\ldots$. It follows that $S\ut{sd}_{\wt{w}}\leq1+\e$, independently of the choices of frequencies. The lemma shows that replacing a portion of a strongly dyadically smooth weight with an averaged quasi-periodisation of another dyadically smooth weight preserves the strong dyadic smoothness of the original weight.

\begin{lm}\label{l:remodel doubl}
Let $w$ be a weight on an interval $I\in\cD$, and assume that $S\ut{sd}_{w}\leq 1+\e$ for some $\e>0$. Let $J$ be a dyadic subinterval of $I$, such that $w$ is constant on $J$. Let $v$ be a weight on $J$ such that $\langle v\rangle\ci{J}=\langle w\rangle\ci{J}$ and $S\ut{d}_{v}\leq1+\delta$, where $\delta>0$ satisfies $(1+\delta)^{3}\leq1+\e$. Consider the weight $\wt{w}:=w+(\overline{\text{Q}\Pi}\ci{J}^{N}v)\1\ci{J}-\langle v\rangle\ci{J}\1\ci{J}$ on $I$, i.e.
\begin{equation*}
\wt{w}(x)=
\begin{cases}
w(x),\text{ if }x\notin J\\\\
(\overline{\text{Q}\Pi}\ci{J}^{N}v)(x),\text{ if }x\in J
\end{cases}
,\qquad\forall x\in I.
\end{equation*}
Then, there holds $S\ut{sd}_{\wt{w}}\leq 1+\e$.
\end{lm}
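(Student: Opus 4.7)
The plan is to bound $S\ut{sd}_{\wt{w}}$ by a case analysis over pairs of adjacent equal-length dyadic intervals $K_{1},K_{2}\subseteq I$. First I would record two structural facts about $\wt{w}|\ci{J}=\overline{\text{Q}\Pi}\ci{J}^{N}v$. (a) Since $\Pi\ci{J}^{N}v$ has average $\La v\Ra\ci{J}$ on each $L\in\ch^{N}(J)$ (by the change of variables defining the periodisation), $\La\wt{w}\Ra\ci{L}=\La v\Ra\ci{J}$ for every dyadic $L\subseteq J$ with $|L|\geq|J|/2^{N}$, and $\wt{w}\equiv\La v\Ra\ci{J}$ on each exceptional $L\in\cE\ci{N}(J)$. (b) On each regular $L\in\cR\ci{N}(J)$ one has $\wt{w}=\mathbb{E}\ci{\ch^{2}(J)}[v]\circ\psi\ci{L,J}$, which is piecewise constant on the four grandchildren of $L$, taking value $\La v\Ra\ci{J''}$ on the grandchild that corresponds under $\psi\ci{L,J}$ to the grandchild $J''$ of $J$. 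Combined with $w$ being constant on $J$ with $\La w\Ra\ci{J}=\La v\Ra\ci{J}$, these facts yield $\La\wt{w}\Ra\ci{K}=\La w\Ra\ci{K}$ for every dyadic $K\subseteq I$ satisfying either $K\cap J=\varnothing$, or $K\supseteq J$, or $K\subseteq J$ of scale $\geq|J|/2^{N}$.

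With this in hand, I would split $(K_{1},K_{2})$ into three cases. \emph{Case A:} both $K_{i}$ are disjoint from $J$, or one of them contains $J$ (by dyadic nesting and equality of lengths, the other must then be disjoint from $J$). In either situation both averages of $\wt{w}$ coincide with those of $w$, so the ratio is bounded by $S\ut{sd}_{w}\leq 1+\e$. \emph{Case B:} exactly one $K_{i}$, say $K_{1}$, is strictly inside $J$ while $K_{2}$ lies outside $J$; then $K_{1}$ necessarily touches $\partial J$. If $|K_{1}|\geq|J|/2^{N}$ fact (a) gives $\La\wt{w}\Ra\ci{K_{1}}=\La v\Ra\ci{J}$; if $|K_{1}|<|J|/2^{N}$ then $K_{1}$ sits inside the boundary-adjacent element of $\cE\ci{N}(J)$, again on which $\wt{w}\equiv\La v\Ra\ci{J}$. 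Either way $\La\wt{w}\Ra\ci{K_{1}}=\La w\Ra\ci{J}=\La w\Ra\ci{K_{1}}$, so the ratio reduces to one for $w$ and is $\leq S\ut{sd}_{w}\leq 1+\e$.

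\emph{Case C:} both $K_{1},K_{2}\subseteq J$. By (a) and (b), each average $\La\wt{w}\Ra\ci{K_{i}}$ equals $\La v\Ra\ci{J_{\ast}}$ for some $J_{\ast}\in\{J,J_{-},J_{+},(J_{-})_{-},(J_{-})_{+},(J_{+})_{-},(J_{+})_{+}\}$ determined by the scale of $K_{i}$ and by its position inside $\ch^{N}(J)$. Tracking how $\psi\ci{L,J}$ acts, the hardest configurations are: $K_{i}$ living in the extremes of two different regular stopping intervals $L_{1},L_{2}\in\cR\ci{N}(J)$ (which gives ratio $\La v\Ra\ci{(J_{+})_{+}}/\La v\Ra\ci{(J_{-})_{-}}$), or $K_{i}$ straddling the boundary between $(L)_{-}$ and $(L)_{+}$ inside a common $L\in\cR\ci{N}(J)$ (which gives $\La v\Ra\ci{(J_{-})_{+}}/\La v\Ra\ci{(J_{+})_{-}}$). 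In each such case the ratio factors through a three-step chain in the dyadic tree of $J$ of the form (grandchild-to-parent-child)$\cdot$(sibling children)$\cdot$(parent-child-to-grandchild), and each factor is $\leq 1+\delta$ upon combining $S\ut{d}_{v}\leq 1+\delta$ with the midpoint identity $\La v\Ra\ci{I'}=(\La v\Ra\ci{I'_{-}}+\La v\Ra\ci{I'_{+}})/2$. The hypothesis $(1+\delta)^{3}\leq 1+\e$ closes the estimate. All remaining subcases of Case C (siblings inside a common child of $J$, $K_{i}$ entirely inside a single grandchild of some $L\in\cR\ci{N}(J)$, or one $K_{i}$ inside an exceptional $L$) yield at worst a single-factor or two-factor bound by $1+\delta$.

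The main obstacle, conceptually, is recognizing that when $K_{1},K_{2}$ sit in \emph{different} regular stopping intervals $L_{1},L_{2}$, although they are adjacent in $J$, the rescalings $\psi\ci{L_{i},J}$ send them to \emph{non-adjacent} extreme positions inside $J$, so one must carefully traverse the full three-step chain in the dyadic tree of $J$ rather than relying on a two-step comparison. Once the correct chain is identified and the midpoint identity is applied together with $S\ut{d}_{v}\leq 1+\delta$, the desired bound $(1+\delta)^{3}\leq 1+\e$ is automatic.
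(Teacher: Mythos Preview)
Your argument is correct and follows essentially the same route as the paper. The paper compresses your Cases A and B into the single observation that whenever both $K$ and $L$ either lie outside $J$ or touch $\partial J$, one has $\La\wt w\Ra\ci{K}=\La w\Ra\ci{K}$ and $\La\wt w\Ra\ci{L}=\La w\Ra\ci{L}$, so the ratio is bounded by $S\ut{sd}_{w}$. For the remaining case (one of $K,L$ lies in $J$ and does not touch $\partial J$, which forces both to lie in $J$), the paper simply notes that $\La\wt w\Ra\ci{K}=\La v\Ra\ci{K'}$ and $\La\wt w\Ra\ci{L}=\La v\Ra\ci{L'}$ for some $K',L'\in\bigcup_{k=0}^{2}\ch^{k}(J)$ and invokes the blanket bound $(S^{\mathrm d}_v)^3$ without isolating the extremal configuration; your explicit identification of the worst pair $(J_+)_+$ versus $(J_-)_-$ (arising from adjacent regular stopping intervals) and the three-step chain through $J_\pm$ is exactly what underlies that bound.
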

\begin{proof}
Let $K,L\in\cD(I)$ be adjacent with $|K|=|L|$. If either both $K$ and $L$ are not contained in $J$ or both $K$ and $L$ touch the boundary of $J$, we have
\begin{equation*}
\frac{\La\wt{w}\Ra\ci{K}}{\La\wt{w}\Ra\ci{L}}=\frac{\La w\Ra\ci{K}}{\La w\Ra\ci{L}}\leq S\ut{sd}_{w}\leq1+\e.
\end{equation*}
If one of $K,L$ is contained in $J$ and does not touch the boundary of $J$, then it is clear that $\La \wt{w}\Ra\ci{K}=\La v\Ra\ci{K'}$ and $\La \wt{w}\Ra\ci{L}=\La v\Ra\ci{L'}$ for some $K',L'\in\bigcup_{k=0}^{2}\ch^{k}(J)$, therefore 
\begin{equation*}
\frac{\La\wt{w}\Ra\ci{K}}{\La\wt{w}\Ra\ci{L}}\leq(S\ut{d}_{v})^{3}\leq(1+\delta)^{3}\leq1+\e,
\end{equation*}
concluding the proof.
\end{proof}

\subsubsection{Extending the weights to the entire real line}\label{s: smooth Haar mult extend}

Consider now the weights $\wt{w}',\wt{\sigma}'$ on $\R $ given by
\begin{equation*}
\wt{w}'(x)=
\begin{cases}
\tilde{w}(x-k),\;\forall x\in(k,k+1),\text{ if }k\text{ is even}\\
\wt{w}(k+1-x),\;\forall x\in(k,k+1),\text{ if }k\text{ is odd}
\end{cases}
,\qquad\forall k\in\mathbb{Z},
\end{equation*}
and similarly for $\wt{\sigma}'$. Obviously $\wt{\sigma}'=(\wt{w}')^{-1/(p-1)}$. Translation and reflection invariance shows immediately that $[\wt{w}']\ci{A_{p},\cD}=[\wt{w}]{\ci{A_{p},\cD([0,1))}}$. Moreover, translation and reflection invariance yields that $S\ut{sd}_{\wt{w}'}$ over $[k,k+1)$ is equal to $S\ut{sd}_{\wt{w}}$, for all $k\in\mathbb{Z}$. Noticing now that for all adjacent $I,J\in\cD$ with $|I|=|J|$ whose common endpoint is an integer there holds $\La\wt{w}'\Ra\ci{I}=\La\wt{w}'\Ra\ci{J}$, we deduce $S\ut{sd}_{\wt{w}'}= S\ut{sd}_{\wt{w}}$. Similarly $S\ut{sd}_{\wt{\sigma}'}= S\ut{sd}_{\wt{\sigma}}$.

For any $\e>0$, one can then achieve $S_{\wt{w}'},S_{\wt{\sigma}'}\leq1+\e$ and $[\wt{w}']\ci{A_{p}}\lesssim_{p}[\wt{w}']\ci{A_{p},\mathcal{D}}=[w]\ci{A_{p},\mathcal{D}([0,1))}$ by taking $\delta>0$ sufficiently small, per Lemmas \ref{l:naz1} and \ref{l:naz2} respectively.

\begin{rem} \label{r:dep Muck char}
We notice that the above estimates yield that the Muckenhoupt characteristic $[\wt{w}']\ci{A_{p}}$ is comparable to $M$ but in an exponential way with respect to $p$. In fact, we get $M\leq[\tilde{w}']\ci{A_{p}}\leq 2^{p}5eM$. If one cares only about dyadic Muckenhoupt charasteristics and ignores the ``small step" requirement, then as we saw in Section \ref{s:LS ex} it is possible to give an example with dyadic Muckenhoupt characteristic comparable to $M$ within absolute constants.
\end{rem}

\subsubsection{Respecting weighted norms}\label{s: smooth Haar mult weighted norms}

Consider the functions $\wt{f}'=(\wt{\bff}/\wt{\sigma})\1\ci{[0,1)}$, $\wt{g}'=(\wt{\bg}/\wt{w})\1\ci{[0,1)}$ on the real line. Identically to the case of the ``small-step" transform, see \eqref{SM Haar mult weighted norms}, we have $\Vert\wt{f}'\Vert\ci{L^{p}(\wt{\sigma}')}=\Vert f\Vert\ci{L^{p}(\sigma)}$ and $\Vert\wt{g}'\Vert\ci{L^{p'}(\wt{w}')}=\Vert g\Vert\ci{L^{p}(w)}$.

\subsubsection{Getting the damage}\label{s: smooth Haar mult damage}

It remains now to verify that we get the desired damage. 

\begin{lm}\label{l:remodel pres}
Let $\bff,\bg,\wt{\bff},\wt{\bg}$ be as above. There holds
\begin{equation*}
\sum_{I\in\cD}|I|\cdot|\ssDelta\ci{I}\wt{\bff}|\cdot|\ssDelta\ci{I}\wt{\bg}|=\sum_{J\in\cD}|J|\cdot|\ssDelta\ci{J}\bff|\cdot|\ssDelta\ci{J}\bg|.
\end{equation*}
\end{lm}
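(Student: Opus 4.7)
The plan is to use the martingale-difference decomposition \eqref{iterated remodeling martingale difference decomposition} of $\wt F:=(\wt\bff,\wt\bg)$ to set up an exact, measure-preserving bijection between the nonzero Haar coefficients of $\wt F$ and those of $F:=(\bff,\bg)$, working componentwise simultaneously. Since $\wt\bff$ and $\wt\bg$ are produced by the same iterated remodeling applied to the two coordinates of $F$, the correspondence will match $\ssDelta\ci{K}\wt\bff$ and $\ssDelta\ci{K}\wt\bg$ to $\ssDelta\ci{I}\bff$ and $\ssDelta\ci{I}\bg$ for the \emph{same} pair $(K,I)$, so $|\ssDelta\ci{K}\wt\bff|\cdot|\ssDelta\ci{K}\wt\bg|=|\ssDelta\ci{I}\bff|\cdot|\ssDelta\ci{I}\bg|$; summing $|K|$ over all $K$ associated with a given $I$ will then recover $|I|$.

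First I would read off the Haar coefficients of a single contribution $D\ci{J}F=\overline{\text{Q}\Pi}\ci{J}^{N(J)}((\Delta^{2}\ci{I}F)\circ\psi\ci{J,I})$, with $J\in\hat{\cS}^{k+1}$ and ancestral $I\in\ch^{2k}(I_{0})$. Using that $D\ci{J}F$ is supported on $J$, has zero average, vanishes on the exceptional stopping intervals $\cE\ci{N(J)}(J)$, has average $\La F\Ra\ci{I}$ on every interval of $\ch^{N(J)}(J)$ (cf.\ Remark \ref{r:end av}), and equals a rescaled copy of $\Delta^{2}\ci{I}F$ on each regular stopping interval $L\in\cR\ci{N(J)}(J)$, a direct inspection should yield $\ssDelta\ci{K}D\ci{J}F=0$ \emph{except} when $K\in\cR\ci{N(J)}(J)$ (giving $\ssDelta\ci{K}D\ci{J}F=\ssDelta\ci{I}F$) or $K=L_{\pm}$ for some $L\in\cR\ci{N(J)}(J)$ (giving $\ssDelta\ci{K}D\ci{J}F=\ssDelta\ci{I_{\pm}}F$, the sign matching the side). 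Next I would verify that each dyadic $K$ with $\ssDelta\ci{K}\wt F\neq 0$ picks up a contribution from exactly one $J\in\hat{\cS}$: since $\cS^{k+1}$ consists of strict subintervals of grandchildren of $\cS^{k}$-intervals, the scales of successive $\cS^{k}$ strictly decrease, so $K$ can be a regular stopping interval for at most one step and a dyadic child of a regular stopping interval for at most one step. This would give a well-defined map $K\mapsto I(K)$, depending only on the iterative construction, with $\ssDelta\ci{K}\wt F=\ssDelta\ci{I(K)}F$ simultaneously for both components.

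The final ingredient is the mass-balance identity $\sum_{K:\,I(K)=I}|K|=|I|$ for every dyadic $I\subseteq I_{0}$, to be proved by induction on the generation of $I$. For $I\in\ch^{2k}(I_{0})$ the preimage of $I$ should coincide, up to a null set, with the union over $N\in\cS^{k}$ of the unique grandchild $M\in\ch^{2}(N)$ whose ``address'' inside $N$ matches the address of $I$ inside $I_{0}$; using that the regular stopping intervals at step $k+1$ cover each such $M$ up to measure zero together with the inductive identity $\sum_{N\in\cS^{k}}|N|=|I_{0}|$, the total measure equals $|I_{0}|/4^{k}=|I|$. For $I\in\ch^{2k+1}(I_{0})$ one would take the appropriate dyadic child of each $K$ from the even-generation case, halving the measure to $|I|$. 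Combining the three steps gives
\begin{equation*}
\sum_{K\in\cD}|K|\cdot|\ssDelta\ci{K}\wt\bff|\cdot|\ssDelta\ci{K}\wt\bg|=\sum_{I}\Bigl(\sum_{K:\,I(K)=I}|K|\Bigr)|\ssDelta\ci{I}\bff|\cdot|\ssDelta\ci{I}\bg|=\sum_{I\in\cD}|I|\cdot|\ssDelta\ci{I}\bff|\cdot|\ssDelta\ci{I}\bg|,
\end{equation*}
as required. The hard part will be the bookkeeping needed to identify the ancestral $I(J)$ and to carry out the measure-balance induction cleanly; however, the independence of the step-$(k+1)$ iterations inside distinct grandchildren of $\cS^{k}$, combined with the rescaling invariance of the construction, should make both steps routine.
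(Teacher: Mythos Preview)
Your approach is essentially the same as the paper's: both compute the Haar coefficients of $\wt F$ via the contributions $D\ci{J}F$, observe that on each $L\in\cR\ci{N(J)}(J)$ and its children the Haar coefficients of $\wt F$ reproduce those of $F$ at the ancestral interval and its children, and then sum. The paper organizes the sum \emph{by step} (showing $\sum_{K\in\cS^{k+1}\cup\ch(\cS^{k+1})}|K|\cdot|\ssDelta\ci{K}\wt\bff|\cdot|\ssDelta\ci{K}\wt\bg|=\sum_{J\in\ch^{2k}(I_0)\cup\ch^{2k+1}(I_0)}|J|\cdot|\ssDelta\ci{J}\bff|\cdot|\ssDelta\ci{J}\bg|$ for every $k$), while you organize it \emph{by target interval} via the map $K\mapsto I(K)$; these are just two ways of slicing the same double sum.

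There is, however, a genuine error in your mass-balance step. You claim that for $I\in\ch^{2k}(I_0)$ the preimage $\{K:I(K)=I\}$ is, up to null sets, the union over \emph{all} $N\in\cS^{k}$ of one grandchild of $N$, and hence has measure $\sum_{N\in\cS^{k}}|N|/4=|I_0|/4$. But $|I|=|I_0|/4^{k}$, so your computation is off by a factor of $4^{k-1}$ once $k\ge 2$. The reason is that for $k\ge 2$ different intervals $N\in\cS^{k}$ carry \emph{different} ancestral intervals in $\ch^{2(k-1)}(I_0)$: the grandchild $N_{\alpha\beta}$ has ancestral interval $(\mathrm{anc}(N))_{\alpha\beta}$, so only those $N$ with $\mathrm{anc}(N)$ equal to the grandparent of $I$ contribute. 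The correct inductive statement is therefore not $\sum_{N\in\cS^{k}}|N|=|I_0|$ but rather
\[
\sum_{\substack{N\in\cS^{k}\\ \mathrm{anc}(N)=I'}}|N|=|I'|\qquad\text{for every }I'\in\ch^{2(k-1)}(I_0),
\]
which you then pass to generation $2k$ by taking one grandchild of each such $N$ (dividing by $4$). With this corrected hypothesis the induction goes through exactly as you outline, and the rest of your argument is fine. The paper's step-by-step organization sidesteps this bookkeeping by never isolating a single target interval $I$, applying the rescaled step-$1$ identity inside each $M\in\ch^{2}(\cS^{k})$ and summing; the same ancestral-tracking is implicit there but does not need to be stated separately.
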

\begin{proof}
First of all, since $\sum_{I\in\cS^1}|I|=|I_0|$, where $I_0:=[0,1)$,  \eqref{from previous to next averaged term martingale differences} coupled with a translation and rescaling argument yields
\begin{equation*}
\sum_{I\in\cS^1\cup\ch(\cS^1)}|I|\cdot|\ssDelta\ci{I}\wt{\bff}|\cdot|\ssDelta\ci{I}\wt{\bg}|=|I_0|\cdot|\ssDelta\ci{I_0}\bff|\cdot|\ssDelta\ci{I_0}\bg|+\sum_{J\in\ch(I_0)}|J|\cdot|\ssDelta\ci{J}\bff|\cdot|\ssDelta\ci{J}\bg|,
\end{equation*}
independently of the choices of frequencies. Since the iterative scheme consists in iteration of the same fundamental construction, up to translating and rescaling, over every interval in $\ch^2(\cS^1),~\ch^2(\cS^2),\ldots$, we deduce
\begin{equation*}
\sum_{I\in\cS^{k+1}\cup\ch(\cS^{k+1})}|I|\cdot|\ssDelta\ci{I}\wt{\bff}|\cdot|\ssDelta\ci{I}\wt{\bg}|=\sum_{J\in\ch^{2k}(I_0)}|J|\cdot|\ssDelta\ci{J}\bff|\cdot|\ssDelta\ci{J}\bg|+\sum_{J\in\ch(\ch^{2k}(I_0))}|J|\cdot|\ssDelta\ci{J}\bff|\cdot|\ssDelta\ci{J}\bg|,
\end{equation*}
for all $k=1,2,\ldots$. This yields immediately the desired result.
\end{proof}

\begin{rem} \label{r:max func} Consider the dyadic Hardy-Littlewood maximal functions $M\bff,M\wt{\bff}$ of $\bff,\wt{\bff}$ respectively. Then, similarly to Remark \ref{r:SM max func} we have that the function $|\wt{\bff}|$ is obtained from the function $|\bff|$ through the same iterated remodeling transform as the function $\wt{\bff}$ is obtained from the function $\bff$. Remark \ref{r:pres aveg} yields then $M\wt{\bff}=(M\bff)\circ\Psi$ a.e. on $[0,1)$.

This observation, coupled with Remark \ref{r:SM max func}, shows that any ``large step'' family of examples establishing sharpness of weighted estimates for the dyadic Hardy-Littlewood maximal function over $[0,1)$ (see \cite{Buckley}) yields a family of examples (on the entire real line) with weights of arbitrary smoothness achieving that, in exactly the same way that this was done for the Haar multipliers above.
\end{rem}

\begin{rem}
We see that in this simple case of dyadic models, the choices of frequencies were irrelevant. It is also clear that one could have considered just children of intervals instead of grandchildren. We will however see that in the more subtle case of the Hilbert transform, frequencies will have to be chosen appropriately in order to achieve localization of the action of the operator, and considering grandchildren instead of just children will be essential, given the nature of the special Haar shift.
\end{rem}

\subsection{Muckenhoupt weights taking only two values with prescribed smoothness}\label{s:counterint proof}

We now show how the discussion in Subsection \ref{s:smooth Haar mult} implies the result of Proposition \ref{p:counterint}.

Let $p\in(1,\infty)$. Let $Q>1$. Let $\e>0$ be arbitrarily small. Choose $A_0,B_0>0$ with $A_0B_0^{p-1}=Q$. By the results in the appendix we have that there exist $a_1,b_1,a_2,b_2>0$, such that $a_1b_1^{p-1}=a_2b_2^{p-1}=1$ and $A_0=(a_1+a_2)/2$, $B_0=(b_1+b_2)/2$. Consider the weights $w,\sigma$ on $[0,1)$ given by
\begin{equation*}
w:=a_11{\ci{I_{1}}}+a_21{\ci{J_{1}}},\qquad\sigma:=b_11{\ci{I_{1}}}+b_21{\ci{J_{1}}},
\end{equation*}
where $I_1=\left[0,\frac{1}{2}\right)$ and $J_1=\left[\frac{1}{2},1\right)$. Then $w,\sigma$ are bounded, $\sigma=w^{-1/(p-1)}$ and $[w]\ci{A_{p},\cD}=w([0,1))\sigma([0,1))^{p-1}=A_0B_0^{p-1}=Q$. It is also obvious that $S\ut{d}_{w},S\ut{d}_{\sigma}<\infty$. Choose a sufficiently large positive integer $d>100$. Apply ``small step" transform to the weights $w,\sigma$ of order $d$, in order to obtain new weights $\wt{w},\wt{\sigma}$ respectively on $[0,1)$, and then the iterated remodeling transform on the functions $\wt{w},\wt{\sigma}$, for an arbitrary choice of frequencies (the same for both functions), in order to obtain new weights $\wt{w}',\wt{\sigma}'$ respectively on $[0,1)$. Extend the latter weights to weights $\wt{w}'',\wt{\sigma}''$ respectively on $\R $ as in \ref{s: smooth Haar mult extend}. Then, combining the results of Subsections \ref{s: SM Haar mult} and \ref{s:smooth Haar mult} we have $\wt{\sigma}''=\wt{w}''^{-1/(p-1)}$, $Q\leq[\wt{w}'']\ci{A_{p}}\leq2^{p}(5/4)Q$ and $S_{\wt{w}''}, S_{\wt{\sigma}''}\leq 1+\e$, for small enough $\e$. Moreover, we have $\wt{w}''\in\lbrace a_1,a_2\rbrace$ a.e. on $\R $, since $\wt{w}'$ is obtain from $w$ via composition with measure-preserving transformations.

\section{The case of the Hilbert transform}\label{s:smooth ex Hilb}

In this section we apply iterated remodeling transform on the martingales in the ``small step" example of Subsection \ref{s:SM Haar shift}, in order to obtain a ``small step" example for the Hilbert transform, proving Theorem \ref{t:main res}. We then show how this leads to a counterexample to the $L^{p}$ version of Sarason's conjecture.

\subsection{Estimate for the Hilbert transform}\label{s:smooth Hilb}

We first recall what we achieved in Subsection \ref{s:SM Haar shift}. Recall the special Haar shift $T$ from Subsection \ref{s:SM Haar shift}:
\begin{equation*}
Tf=2\sum_{I\in\cD}(\ssDelta\ci{I}f)(h{\ci{I_{+}}}-h{\ci{I_{-}}}).
\end{equation*}
Let $p\in(1,\infty)$ and $M>2$. Let $\delta>0$ be arbitrarily small. We constructed bounded weights $w,\sigma$ on $[0,1)$, such that $\sigma=w^{-1/(p-1)}$,
\begin{equation*}
M\leq w([0,1))\sigma([0,1))^{p-1},\;[w]\ci{A_{p},\cD} \leq 2^{p}4Me
\end{equation*}
and $w([0,1))\sim M$, $\sigma([0,1))\sim_{p}1$, and also $S\ut{d}_{w},S\ut{d}_{\sigma}\leq 1+\delta$, and non-zero bounded functions $f\in L^{p}(\sigma)$, $g\in L^{p'}(w)$, such that
\begin{align}
\label{est Haar shift}
\La f\sigma,T(gw)\Ra=\sum_{J\in\cD}(\ssDelta\ci{J}(gw))(\ssDelta\ci{J_{+}}(f\sigma)-\ssDelta\ci{J_{-}}(f\sigma))|J|\geq C_{p} M\Vert f\Vert \ci{L^{p}(\sigma)}\Vert g\Vert \ci{L^{p'}(w)}.
\end{align}
Moreover, by construction for the functions $\bff:=f\sigma,~\bg=:gw$ there holds $\ssDelta\ci{I}\bg=0$, for all dyadic intervals $I$ of \emph{odd} generation, and $(\ssDelta\ci{I}\bg)(\ssDelta\ci{I_{-}}\bff)=0\leq(\ssDelta\ci{I}\bg)(\ssDelta{\ci{I_{+}}}\bff)$, for all dyadic intervals $I$ of \emph{even} generation. Note that then
\begin{equation*}
|\La \bff,T(\bg)\Ra|=\La \bff,T(\bg)\Ra=\sum_{J}(\ssDelta\ci{J}\bg)(\ssDelta\ci{J_{+}}\bff)|J|,
\end{equation*}
where the summation runs over all $J\in\cD$ that are of \emph{even} generation.

\subsubsection{Setting up iterated remodeling}\label{s: smooth Hilb set-up}

We apply the iterated remodeling transform on the functions $w,\sigma,\bff,\bg$, for some choices of frequencies to be determined later (the same for all functions), obtaining functions $\wt{w},\wt{\sigma},\wt{\bff},\wt{\bg}$ respectively. We extend $\wt{w},\wt{\sigma}$ to weights on the whole real line having the desired smoothness and Muckenhoupt characteristic properties, as in \ref{s: smooth Haar mult extend}. Let us abuse the notation and denote these extensions by the same letter.

\begin{rem}\label{r:pres end}
From Remark \ref{r:end av} we deduce that $\La\wt{w}\Ra\ci{I}=w([0,1))=\wt{w}([0,1))$, for all dydic subintervals $I$ of $[0,1)$ that touch its boundary, and similarly for $\wt{\sigma}$. This observation will be crucial later in Subsection \ref{s: counter sar}.
\end{rem}

We denote by $H$ the Hilbert transform on the real line. We consider the operator $H(\cdot \wt{\sigma})$, acting from $L^{p}(\wt{\sigma})$ into $L^{p}(\wt{w})$. Consider the functions $\wt{f}=(\wt{\bff}/\wt{\sigma})\1_{[0,1)}$, $\wt{g}=(\wt{\bg}/\wt{w})\1_{[0,1)}$ on the real line. Our goal is to show that if the frequencies are chosen appropriately through an inductive procedure, then one can achieve
\begin{equation}
\label{goal est}
|\La \wt{\bff},H(\wt{\bg})\Ra|=|\La\wt{f}\wt{\sigma},H(\wt{g}\wt{w})\Ra|\gtrsim_{p}M\Vert f\Vert \ci{L^{p}(\sigma)}\Vert g\Vert \ci{L^{p'}(w)}.
\end{equation}
Assuming that this has been achieved, we will have (since the Hilbert transform is antisymmetric)
\begin{align*}
\Vert  H\Vert {\ci{L^{p}(\wt{w})}}=
&\Vert  H(\cdot\wt{\sigma})\Vert {\ci{L^{p}(\wt{\sigma})\rightarrow L^{p}(\wt{w})}}\geq\frac{|\La H(\wt{f}\wt{\sigma}),\wt{g}\wt{w}\Ra|}{\Vert \wt{f}\Vert {\ci{L^{p}(\wt{\sigma})}}\Vert \wt{g}\Vert {\ci{L^{p'}(\wt{w})}}}=\frac{|\La\wt{f}\wt{\sigma},H(\wt{g}\wt{w})\Ra|}{\Vert f\Vert \ci{L^{p}(\sigma)}\Vert g\Vert \ci{L^{p'}(w)}}\gtrsim_{p}M
\end{align*}
and hence the desired ``small step" example for the Hilbert transform.

\subsubsection{Decomposing the bilinear form} \label{s:decomp}

We begin by writing the functions $\wt{\bff}$, $\wt{\bg}$ as the unconditional sums of their martingale differences in $L^2([0,1))$ (up to a constant) as in \eqref{iterated remodeling martingale difference decomposition}, i.e.
\begin{align}
\label{remodel marting decomp}
\wt{\bff}=\La\bff\Ra\ci{[0,1)}+\sum_{I\in\hat{\cS}}D\ci{I}\bff,\qquad\wt{\bg}=\La\bg\Ra\ci{[0,1)}+\sum_{I\in\hat{\cS}}D\ci{I}\bg,
\end{align}
and similarly for $\wt{\bg}$, where $\hat{\cS}:=\bigcup^{\infty}_{k=1}\cS^{k}$ is the family of all starting intervals and $D\ci{I}\bff,D\ci{I}\bg$ are the contributions of the starting interval $I$ to the martingale differences decomposition of $\wt{\bff},\wt{\bg}$ respectively. Since the Hilbert transform is bounded in $L^2(\R )$ and antisymmetric, we have
\begin{align}
\label{decomp}
&\La H(\wt{\bg}\1_{[0,1)}),\wt{\bff}\1_{[0,1)}\Ra
=\sum_{I\in\hat{\cS}}\La H(D\ci{I}\bg),D\ci{I}\bff\Ra+\text{cross terms},
\end{align}
where the cross terms consist of pairings involving either the average of $\bff$ or $\bg$ over $[0,1)$ and the contribution of some starting interval, or contributions of different starting intervals.

Our object is to show that the main term in the right-hand side of \eqref{decomp} produces the desired damage, while the sum of the cross terms can be forced to be arbitrarily close to $0$, through an appropriate choice of frequencies (thus essentially achieving localization of the action of the operator).

\subsubsection{Forcing the sum of the cross terms to be arbitrarily small}\label{s: choose remodel param}

We need the following lemma, whose statement is mentioned in \cite[\S 12]{Nazarov}, showing essentially that the functions $D\ci{I}\bff,~D\ci{I}\bg$ oscillate arbitrarily fast for large enough frequency $N(I)$. Recall from \eqref{from previous to next averaged term martingale differences} that for all $I\in\hat{\cS}$, there exist mean zero functions $\phi\ci{I},~\psi\ci{I}\in L^{\infty}(I)$ such that $D\ci{I}\bff=\overline{\text{Q}\Pi}\ci{I}^{N(I)}\phi\ci{I}$ and $D\ci{I}\bg=\overline{\text{Q}\Pi}\ci{I}^{N(I)}\psi\ci{I}$.

\begin{lm}\label{l:osc}
Let $I\in\cD$. Let $\phi\in L^{\infty}(I)$ with $\La\phi\Ra\ci{I}=0$. Then, there holds $\overline{\text{Q}\Pi}\ci{I}^{N}\phi\rightarrow 0$ weakly in $L^{q}(I)$ as $N\rightarrow\infty$, for all $q\in(1,\infty)$ and $\overline{\text{Q}\Pi}\ci{I}^{N}\phi\rightarrow0$ weakly$^{\ast}$ in $L^{\infty}(I)$ as $N\rightarrow\infty$.
\end{lm}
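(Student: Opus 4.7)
The plan is to combine a uniform $L^{\infty}$ bound with the mean-zero hypothesis, exploiting the oscillatory character of the quasi-periodisation. First I would record the uniform bound $\|\overline{\text{Q}\Pi}\ci{I}^{N}\phi\|\ci{L^{\infty}(I)}\leq\|\phi\|\ci{L^{\infty}(I)}$: on each $J\in\cR\ci{N}(I)$ the function equals $(\mathbb{E}\ci{\ch^{2}(I)}[\phi])\circ\psi\ci{J,I}$, and conditional expectation followed by composition with an affine bijection does not increase the $L^{\infty}$ norm; on each $J\in\cE\ci{N}(I)$ it is identically $\La\phi\Ra\ci{I}=0$.

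Next I would reduce to testing against a dense class. Since $|I|<\infty$, the inclusion $L^{q'}(I)\subseteq L^{1}(I)$ holds for every $q\in(1,\infty)$, so weak$^{\ast}$ convergence in $L^{\infty}(I)$ would automatically give weak convergence in every $L^{q}(I)$; it is therefore enough to establish the weak$^{\ast}$ statement. Combined with the uniform $L^{\infty}$ bound, this reduces matters to checking that $\int\ci{I}(\overline{\text{Q}\Pi}\ci{I}^{N}\phi)\,g\,dx\to 0$ for every $g$ in the dense subclass $C(\overline{I})\subset L^{1}(I)$.

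The heart of the argument will be the observation that $\overline{\text{Q}\Pi}\ci{I}^{N}\phi$ has mean zero over every single element of $\ch^{N}(I)$. On an exceptional stopping interval this is immediate, while on a regular stopping interval $J$ a change of variables along $\psi\ci{J,I}$ (whose Jacobian is $|I|/|J|$) gives $\La\overline{\text{Q}\Pi}\ci{I}^{N}\phi\Ra\ci{J}=\La\mathbb{E}\ci{\ch^{2}(I)}[\phi]\Ra\ci{I}=\La\phi\Ra\ci{I}=0$. Given continuous $g$ on $\overline{I}$ with modulus of continuity $\omega_{g}$, replacing $g$ by $g-\La g\Ra\ci{J}$ on each $J\in\ch^{N}(I)$ produces the pointwise bound $|\int\ci{J}(\overline{\text{Q}\Pi}\ci{I}^{N}\phi)\,g\,dx|\leq\|\phi\|\ci{L^{\infty}(I)}\,|J|\,\omega_{g}(|J|)$. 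Summing over all $J\in\ch^{N}(I)$, whose total measure is $|I|$ and whose common length is $2^{-N}|I|$, would yield $\|\phi\|\ci{L^{\infty}(I)}\,|I|\,\omega_{g}(2^{-N}|I|)\to 0$ by uniform continuity of $g$ on $\overline{I}$. I do not anticipate any real obstacle; the only point requiring attention is confirming that the mean of $\overline{\text{Q}\Pi}\ci{I}^{N}\phi$ vanishes on \emph{both} exceptional and regular stopping intervals, which here is automatic thanks to the standing hypothesis $\La\phi\Ra\ci{I}=0$.
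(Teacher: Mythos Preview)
Your proposal is correct and follows essentially the same approach as the paper: both arguments combine the uniform $L^{\infty}$ bound $\|\overline{\text{Q}\Pi}\ci{I}^{N}\phi\|\ci{L^{\infty}}\le\|\phi\|\ci{L^{\infty}}$ with the key observation that $\overline{\text{Q}\Pi}\ci{I}^{N}\phi$ has mean zero on every interval of $\ch^{N}(I)$, and then conclude by a density argument. The only cosmetic difference is the choice of dense class: the paper tests against functions constant on $\ch^{N}(I)$ (for which the pairing vanishes exactly) and approximates an arbitrary $L^{1}$ function by such step functions via an $\e/3$ argument, whereas you test against continuous functions and control the error with the modulus of continuity; the underlying mechanism is identical.
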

\begin{proof}
It is clear from definition \eqref{averaged quasi-periodisation} of averaged quasi-periodisations that for all $N=3,4,\ldots$, there holds $\La h,\overline{\text{Q}\Pi}\ci{I}^{N}\phi\Ra=0$, for all functions $h$ on $I$ that are constant on all intervals in $\ch^{N}(I)$. Note that $\overline{\text{Q}\Pi}\ci{I}^{N}\phi$, $N=3,4,\ldots$ are uniformly bounded (say by $\Vert \phi\Vert\ci{L^{\infty}(I)}$) in $L^{\infty}(I)$. Then, an ``$\frac{\e}{3}$ argument" yields the desired result.
\end{proof}

Now, for all $I\in\cD:=\cD([0,1))$, set $\text{rk}(I):=-\log_{2}(\ell(I))$. Recall that $\cD=\bigcup_{k=0}^{\infty}\cD_{k}$, where $\cD_{k}:=\lbrace I\in\cD:\;\text{rk}(I)=k\rbrace$ is finite, for all $k=0,1,2,\ldots$. It follows immediately that one can enumerate the elements of the subset $\hat{\cS}$ of $\cD$ as $I_0,I_1,I_2,I_3,\ldots$, where $I_0:=[0,1)$, such that for all $0\leq l<k$ there holds $\text{rk}(I_{l})\leq\text{rk}(I_{k})$. Note then that in particular, for all $0\leq l<k$ we have either $I_k\cap I_l=\emptyset$ or $I_k\subset I_l$.

Note also that for all $I\in\hat{\cS}$, the functions $\phi\ci{I},~\psi\ci{I}\in L^{\infty}(I)$ depend only on  $\bff,~\bg$ and the choices of frequencies for starting intervals strictly containing $I$. Therefore, if for some $k=1,2,3,\ldots$ we have already picked $N(I_{l})$, for all $l=0,\ldots,k-1$, then by Lemma \ref{l:osc} we can choose the frequency $N(I_{k})$, in a way depending only on the previous choices and the functions $\bff,~\bg$, such that
\begin{align*}
T\ci{k}&:=H(\La \bff\Ra\ci{I_0}\1\ci{I_0}),D\ci{I_{k}}\bg\Ra+\La H(D\ci{I_{k}}\bff),\La \bg\Ra\ci{I_0}\1\ci{I_0})\Ra
\\&+\left\La H(D\ci{I_{k}}\bff),\sum_{l=0}^{k-1}D\ci{I_{l}}\bg\right\Ra+\left\La H\bigg(\sum_{l=0}^{k-1}D\ci{I_{l}}\bff\bigg),D\ci{I_{k}}\bg\right\Ra
\end{align*}
is as small in absolute value as we want (since the Hilbert transform is bounded in $L^2(\R)$). In particular, we can achieve $|T_{k}|\leq\frac{\e'}{2^{k+1}}$, where $\e':=\frac{cC_{p}}{2}M\Vert f\Vert \ci{L^{p}(\sigma)}\Vert g\Vert \ci{L^{p'}(w)}$, provided the choice of $N(I_{k})$ is allowed to depend also on $M,p$ and the functions $w,\sigma$. Here, $c>0$ is an absolute constant to be determined in Lemma \ref{l:est}.

Clearly the sum of cross terms is equal to $\sum_{k=1}^{\infty}T\ci{k}$, thus one can force this sum to be less that $\e'$ in absolute value, by choosing the frequencies to be large enough, in a way depending only on $M,p$ and the functions $w,\sigma,f,g$.

This way of forcing the sum of the cross terms to be arbitrarily close to 0 in absolute value is essentially the same as in \cite[\S 11]{Nazarov}. The choice of $\epsilon'$ is also the same as in \cite[\S 11]{Nazarov}, up to the constant $c$.

\subsubsection{Getting the damage from the main term}\label{damage Hilb}

We will now show that the main term in the right-hand side of \eqref{decomp} produces the desired damage, independently of the above choice of frequencies. More precisely, we will show that
\begin{align}
\label{damage first term}
\sum_{I\in\hat{\cS}^1}\langle H(D\ci{I}\bg),D\ci{I}\bff\rangle\;\leq-c(\ssDelta\ci{I_0}\bg)(\ssDelta\ci{(I_{0})_{+}}\bff)|I_0|,
\end{align}
independently of the choice of frequencies for intervals in $\hat{\cS}^1$, where $I_0:=[0,1)$. Keeping in mind that iterated remodeling as described here moves two generation deep at each step, we deduce through a translating and rescaling argument that
\begin{align*}
\sum_{I\in\hat{\cS}}\langle H(D\ci{I}\bg),D\ci{I}\bff\rangle\;\leq-c\sum_{\substack{J\in\cD\\J\text{ is of even generation}}}(\ssDelta\ci{J}\bg)(\ssDelta\ci{J_{+}}\bff)|J|=-c\La \bff,T(\bg)\Ra,
\end{align*}
i.e.
\begin{align*}
-\sum_{I\in\hat{\cS}}\langle H(D\ci{I}\bg),D\ci{I}\bff\rangle\geq c\La \bff,T(\bg)\Ra=c|\La\bff, T(\bg)\Ra|.
\end{align*}
The last equation, coupled with \eqref{decomp}, \eqref{est Haar shift} and the choice of $\e'$, implies \eqref{goal est} (with constant $\frac{cC_{p}}{2}$), yielding the desired result.

We now establish \eqref{damage first term}. Recall that the regular stopping intervals in $\cS^1$ cover $I_0$ up to a set of zero measure, so it suffices to show that $\langle H(D\ci{I}\bg),D\ci{I}\bff\rangle\leq-c(\ssDelta\ci{I_0}\bg)(\ssDelta\ci{(I_{0})_{+}}\bff)|\bigcup\cR\ci{N(I)}(I)|$, for all $I\in\hat{\cS}^1$. Recall from \eqref{from previous to next averaged term martingale differences} that for all $I\in\hat{\cS}^1$, $D\ci{I}\bg$ is just a rescaled and translated copy of $\overline{\text{Q}\Pi}\ci{I_0}^{N(I)}(\Delta^2\ci{I_0}\bg)$ over $I$, and similarly for $\bff$. Therefore, it suffices only to prove that
\begin{equation*}
\La H(\overline{\text{Q}\Pi}\ci{I_0}^{N}(\Delta^2\ci{I_0}\bg)),\overline{\text{Q}\Pi}\ci{I_0}^{N}(\Delta^2\ci{I_0}\bff)\Ra\leq-c(\ssDelta\ci{I_0}\bg)(\ssDelta\ci{(I_{0})_{+}}\bff)|\bigcup\cR\ci{N}(I_0)|,\;\forall N=3,4,\ldots.
\end{equation*}
Let us fix a positive integer $N\geq 3$. Recall  that from the definition \eqref{second order martingale difference} of the second order martingale differences we have
\begin{equation*}
\Delta^{2}\ci{I_0}\bg=(\ssDelta\ci{I_0}\bg)h\ci{I_0}+(\ssDelta\ci{(I_0)_{-}}\bg)h\ci{(I_0)_{-}}+(\ssDelta\ci{(I_0)_{+}}\bg)h\ci{(I_0)_{+}},
\end{equation*}
and similarly for $\bff$. It follows from definition \eqref{averaged quasi-periodisation} of averaged quasi-periodisations, and the facts that $\Delta^{2}\ci{I_0}\bg$ has mean zero and that it is \emph{constant} on the grandchildren of $I_0$, that
\begin{equation*}
\overline{\text{Q}\Pi}\ci{I_0}^{N}(\Delta^{2}\ci{I_0}\bg)=\sum_{J\in\mathcal{G}}[(\ssDelta\ci{I_0}\bg)h\ci{J}+(\ssDelta\ci{(I_0)_{-}}\bg)h\ci{J_{+}}+(\ssDelta\ci{(I_0)_{+}}\bg)h\ci{J_{-}}],
\end{equation*}
and similarly for $\bff$, where $\mathcal{G}:=\cR\ci{N}(I_0)$. Recall that $\ssDelta\ci{(I_0)_{+}}\bg=\ssDelta\ci{(I_0)_{-}}\bg=(\ssDelta\ci{I_0}\bg)(\ssDelta\ci{(I_0)_{-}}\bff)=0$ and that the Hilbert transform is antisymmetric. It follows that
\begin{equation*}
\La H(\overline{\text{Q}\Pi}\ci{I_0}^{N}(\Delta^2\ci{I_0}\bg)),\overline{\text{Q}\Pi}\ci{I_0}^{N}(\Delta^2\ci{I_0}\bff)\Ra=
(\ssDelta\ci{I_0}\bg)(\ssDelta\ci{(I_{0})_{+}}\bff)\big\La H\big(\sum_{J\in\mathcal{G}}h\ci{J},\big),\sum_{J\in\mathcal{G}}h\ci{J_{+}}\big\Ra.
\end{equation*}
Coupled with the fact that $(\ssDelta\ci{I_0}\bg)(\ssDelta\ci{(I_0)_{+}}\bff)\geq0$, the following lemma yields then the desired result.

\begin{lm}\label{l:est}

(a) For all intervals $I,J$ in $\R $ with $|I|=|J|$ and $I\cap J=\emptyset$, there holds
\begin{equation*}
\La H(h\ci{I}),h{\ci{J_{+}}}\Ra+\La H(h\ci{J}),h{\ci{I_{+}}}\Ra<0.
\end{equation*}

(b) There holds $\big\La H\big(\sum_{J\in\mathcal{G}}h\ci{J}),\sum_{J\in\mathcal{G}}h\ci{J_{+}},\big\Ra\leq-c\left|\bigcup\mathcal{G}\right|$, where $c=-\La H(h_{[0,1)}),h_{\left[\frac{1}{2},1\right)}\Ra\in(0,\infty)$.
\end{lm}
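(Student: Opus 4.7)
The expression $\Psi(I,J):=\La H(h_I),h_{J_+}\Ra+\La H(h_J),h_{I_+}\Ra$ is symmetric in $I,J$, so we may assume $I$ lies to the left of $J$; by translation and scaling, normalize $I=[0,L)$ and $J=[d,d+L)$ with $d\ge L$. The strategy is to rewrite both summands as signed differences of averages of the single function $H(h_I)$ over the four grandchildren of $J$, denoted from left to right by $J_0,J_1,J_2,J_3=J_{--},J_{-+},J_{+-},J_{++}$; set $A_k:=\La H(h_I)\Ra_{J_k}$. The first term is immediate: $\La H(h_I),h_{J_+}\Ra=\tfrac{L}{4}(A_3-A_2)$. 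For the second, translation invariance of $H$ gives $H(h_J)(\cdot)=H(h_I)(\cdot-d)$, and the reflection symmetry $H(h_I)(x)=H(h_I)(L-x)$ (which follows from antisymmetry of $h_I$ about its center $L/2$ combined with the behavior of the Hilbert transform under reflection) permits the change of variables $w=L-(x-d)$ to map $I_+$ onto $J_-=J_0\cup J_1$, giving $\La H(h_J),h_{I_+}\Ra=\tfrac{L}{4}(A_0-A_1)$. Therefore
\[
\Psi(I,J)=\tfrac{L}{4}\bigl[(A_0-A_1)-(A_2-A_3)\bigr].
\]

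\textbf{Convexity step.} To show the right-hand side has the asserted sign, I analyze $H(h_I)$ on $(L,\infty)$ via the explicit formula $\pm\log\tfrac{(x-L/2)^2}{x(x-L)}$ (up to the ambient sign convention): it is strictly monotone there, and its second derivative has the sign of
\[
\frac{1}{a^2}+\frac{1}{b^2}-\frac{8}{(a+b)^2}\qquad (a=x-L,\ b=x),
\]
which is strictly positive for $a\ne b$, as follows by chaining AM--GM $\tfrac{1}{a^2}+\tfrac{1}{b^2}\ge\tfrac{2}{ab}$ with $4ab<(a+b)^2$ (equivalent to $(a-b)^2>0$). Thus $H(h_I)$ is strictly convex (or concave) on $(L,\infty)$ relative to its monotonicity. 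For a function that is simultaneously strictly monotone and strictly convex (or concave), the first differences $A_k-A_{k+1}$ over consecutive equal-length intervals are themselves strictly monotone in $k$, so $(A_0-A_1)-(A_2-A_3)$ has a definite sign, yielding $\Psi(I,J)<0$.

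\textbf{Part (b) and main obstacle.} For (b), expand
\[
\Bigl\La H\Bigl(\sum_{J\in\mathcal{G}}h_J\Bigr),\sum_{J\in\mathcal{G}}h_{J_+}\Bigr\Ra=\sum_{J\in\mathcal{G}}\La H(h_J),h_{J_+}\Ra+\sum_{\substack{I,J\in\mathcal{G}\\I\ne J}}\La H(h_I),h_{J_+}\Ra.
\]
Each diagonal term equals $|J|\cdot\La H(h_{[0,1)}),h_{[1/2,1)}\Ra=-c|J|$ by translation/scaling invariance of $H$ and $L^\infty$-normalization of the Haar functions, so the diagonal sum equals $-c|\bigcup\mathcal{G}|$ using pairwise disjointness and equal length of the elements of $\mathcal{G}$. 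The off-diagonal terms, grouped into unordered pairs $\{I,J\}$ with $I\ne J$ (disjoint of equal length, so part (a) applies), each contribute negatively, so their total is nonpositive; combining gives the claimed bound. Positivity of $c$ follows by direct computation with the antiderivative $F(x)=2(x-\tfrac12)\log|x-\tfrac12|-x\log x+(1-x)\log(1-x)$ of $H(h_{[0,1)})$ on $(0,1)$: $F(\tfrac12)=0$, $F(\tfrac34)=-\tfrac34\log 3$, $F(1)=-\log 2$, yielding $c=\log(3\sqrt3/2)>0$. The main obstacle is establishing the strict convexity/concavity of $H(h_I)$ on $(L,\infty)$; without it the second difference $(A_0-A_1)-(A_2-A_3)$ would be sign-indeterminate. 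Identifying the correct translation-plus-reflection identities that reduce $\Psi(I,J)$ to this single second-order difference of $H(h_I)$-averages is the heart of the argument; everything else is formal manipulation, pairing, or explicit evaluation.
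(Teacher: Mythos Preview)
Your approach is essentially the same as the paper's. Both proofs use the reflection symmetry $H(h_I)(L-x)=H(h_I)(x)$ (together with translation invariance) to rewrite $\Psi(I,J)$ as a second difference of averages of $H(h_I)$ over four consecutive equal-length subintervals of $J$; the paper packages this as $\langle H(h_I),h_{J_+}-h_{J_-}\rangle$ and then argues that $a\mapsto\langle H(h_{[0,1)}),h_{[a,a+1/2)}\rangle$ is strictly decreasing on $[1,\infty)$, which is exactly your statement that the first differences $A_k-A_{k+1}$ are strictly monotone. Both deduce this from the strict concavity of $H(h_{[0,1)})$ on $(1,\infty)$. Part (b) is handled identically in both: diagonal terms give $-c|\bigcup\mathcal{G}|$ by scaling, off-diagonal terms pair up and are negative by (a).

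One point to tighten: you hedge between ``convex (or concave)'' and then assert $\Psi(I,J)<0$ without resolving which case occurs. The sign of $\Psi$ genuinely depends on the sign convention for $H$, so you must commit. With the paper's formula for $H(h_{[0,1)})$ the function is strictly \emph{concave} on $(1,\infty)$, which forces $(A_0-A_1)-(A_2-A_3)<0$; monotonicity plays no role in this step and can be dropped. Similarly, your explicit evaluation of $c$ via the antiderivative $F$ is a nice addition (the paper only argues $c>0$ qualitatively from monotonicity on $(1/2,1)$), but the sign you report must be matched to the same convention.
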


\begin{proof}
(a) First of all, direct computation gives
\begin{equation*}
H(h_{[0,1)})(x)=\frac{1}{\pi}\ln\left(\frac{4|x(x-1)|}{(2x-1)^2}\right)\text{ for almost every }x\in\R\setminus\left\lbrace0,\frac{1}{2},1\right\rbrace,
\end{equation*}
so $H(h_{[0,1)})$ can be identified as a smooth function on $\R \setminus\left\lbrace0,\frac{1}{2},1\right\rbrace$. Direct computation shows then that $H(h{\ci{[0,1)}})$ is strictly increasing and strictly concave in $(1,\infty)$, and strictly decreasing in $\left(\frac{1}{2},1\right)$.

Let now $I,J$ be intervals in $\R $ with $|I|=|J|$ and $I\cap J=\emptyset$. Without loss of generality, we may assume that $\inf J\geq\sup I$. Note that $H(h_{[0,1)})(1-x)=H(h_{[0,1)})(x)$, for all $x\in\R \setminus\left\lbrace0,\frac{1}{2},1\right\rbrace$. It follows that for almost every $x\in\R$, if we denote by $s(x)$ the symmetric point to $x$ with respect to the center of $I$, then we have $H(h\ci{I})(s(x))=H(h\ci{I})(x)$. Then, a simple symmetry and translation argument, illustrated in Figure \ref{intervals}, shows that
\begin{equation*}
\La H(h\ci{J}),h\ci{I_{+}}\Ra=-\La H(h\ci{I}),h\ci{J_{-}}\Ra.
\end{equation*}

\begin{figure}[h]\centering
\includegraphics[width=\linewidth]{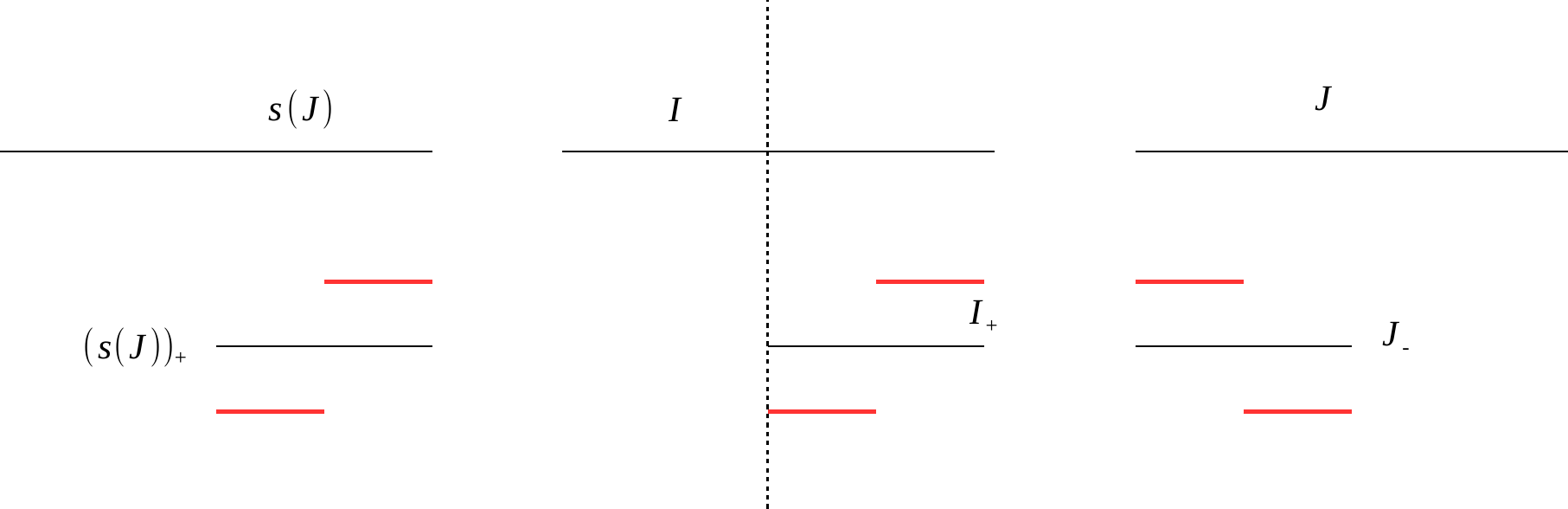}
\caption{Illustration of $\La H(h\ci{J}),h\ci{I_{+}}\Ra=-\La H(h\ci{I}),h\ci{J_{-}}\Ra$}
\label{intervals}
\end{figure}

Therefore, rescaling and translating we obtain
\begin{align*}
\La H(h\ci{I}),h\ci{J_{+}}\Ra+\La H(h\ci{J}),h\ci{I_{+}}\Ra&=\La H(h\ci{I}),h\ci{J_{+}}-h\ci{J_{-}}\Ra
=|I|\La H(h\ci{[0,1)}),h\ci{K_{+}}-h\ci{K_{-}}\Ra,
\end{align*}
for some interval $K$ in $\R$ with $|K|=1$ and $\inf K\geq 1$. Therefore, it suffices to prove that the continuous function $\La H(h\ci{[0,1)}),h\ci{\left[a,a+\frac{1}{2}\right)}\Ra$, $a\in[1,\infty)$ is strictly decreasing. This follows immediately from the fact that the function $H(h\ci{[0,1)})$ is strictly concave in $(1,\infty)$.

(b) Since $H(h\ci{[0,1)})$ is strictly decreasing in $\left(\frac{1}{2},1\right)$, we have $c:=-\La H(h_{[0,1)}),h_{\left[\frac{1}{2},1\right)}\Ra\in(0,\infty)$. Moreover, rescaling and translating we obtain $ \La H(h\ci{I}),h{\ci{I_{+}}}\Ra=|I|\La H(h_{[0,1)}),h\ci{\left[\frac{1}{2},1\right)}\Ra$, for all intervals $I$ in $\R$. Since the intervals in $\mathcal{G}$ are pairwise disjoint and have the same length, we deduce from (a)
\begin{align*}
\big\La H\big(\sum_{J\in\mathcal{G}}h\ci{J}\big),\sum_{J\in\mathcal{G}}h\ci{J_{+}}\big\Ra&=
\frac{1}{2}\sum_{\substack{J,K\in\mathcal{G}\\J\neq K}}(\langle H(h\ci{J}),h\ci{K_{+}}\Ra+
\La H(h\ci{K}),h\ci{J_{+}}\Ra)
+\sum_{J\in\mathcal{G}}\La H(h\ci{J}),h\ci{J_{+}}\Ra\\
&\leq-c\sum_{J\in\mathcal{G}}|J|=-c\left|\bigcup\mathcal{G}\right|,
\end{align*}
concluding the proof.
\end{proof}

\begin{rem}\label{r: comp ex}
The constructions show that for every fixed $M,\delta$ and $p$, one can give examples for the Hilbert transform and Haar multipliers differing only in the function $f$ (and in particular one can take $g=-\1_{[0,1)}$ in both cases).
\end{rem}

\begin{rem}
If we were interested just in two-weight estimates, then F. Nazarov's remodeling from \cite{Nazarov} would suffice, i.e. one could completely ignore exceptional stopping intervals (except for $[0,1)$ of course), and in fact one could even stop only after a finite number of steps, without losing damage or smoothness of weights. Iteration here only guarantees that the transforms are measure-preserving, so that one-weight situations remain such after applying them.
\end{rem}

\subsection{Counterexample to \texorpdfstring{$L^{p}$}{Lp} version of Sarason's conjecture}\label{s: counter sar}

Here we describe how the family of examples of Subsection \ref{s:smooth Hilb} will provide through a direct sum of singularities type construction a counterexample to the analog of Sarason's conjecture for every fixed $p$. Roughly speaking, by direct sum construction one should understand that the unit interval is partitioned into subintervals $J_1,J_2,\ldots$, and then each $J_{k}$ is equipped with an (appropriately shifted and rescaled) example from the previous section, in such a way that estimates of the norm of the operator blow up as $k\rightarrow\infty$.

Fix $p\in(1,\infty)$. Let $\delta>0$ be sufficiently small. For all $k=1,2,\ldots$, by Subsection \ref{s:smooth Hilb} we have that there exist bounded weights $w_{k},\sigma_{k}$ on $[0,1)$ with
\begin{equation*}
[w_{k},\sigma_k]\ci{A_{p},\cD}\sim_{p} k,\;\;w_{k}([0,1))\sim k,\;\;\sigma_{k}([0,1))\sim_{p}1,
\end{equation*}
 and $S\ut{sd}_{w_{k}},S\ut{sd}_{\sigma_{k}}\leq 1+\delta$, and non-zero functions $f_{k}\in L^{p}(\sigma_{k})$, $g_{k}\in L^{p'}(w_{k})$, such that
\begin{equation}
\label{sequence of examples}
|\La H(f_{k}\sigma_{k}\1_{[0,1)}),g_{k}w_{k}\1_{[0,1)}\Ra|\gtrsim_{p}k\Vert f_{k}\Vert{\ci{L^{p}(\sigma_{k})}}\Vert g_{k}\Vert{\ci{L^{p'}(w_{k})}}.
\end{equation}
Set $I_0:=[0,1)$ and 
\begin{equation*}
I_{k}=\left[0,\frac{1}{2^{k}}\right),\qquad J_{k}=\left[\frac{1}{2^{k}},\frac{1}{2^{k-1}}\right),\qquad k=1,2,\ldots.
\end{equation*}
For all $k=1,2,\ldots$, consider the weights $\wt{w}_{k},\wt{\sigma}_{k}$ on $J_{k}$ that are obtained as rescaled and shifted copies of the weights $\frac{1}{w_{k}([0,1))}w_{k},\frac{1}{\sigma_{k}([0,1))}\sigma_{k}$ respectively on the interval $J_{k}=\left[\frac{1}{2^{k}},\frac{2}{2^{k}}\right)$, i.e.
\begin{equation*}
\wt{w}_{k}(x)=\frac{1}{w_{k}([0,1))}w_{k}(2^{k}x-1),\;\;\;\wt{\sigma}_{k}(x)=\frac{1}{\sigma_{k}([0,1))}\sigma_{k}(2^{k}x-1),\;\forall x\in J_{k},
\end{equation*}
and consider also similarly rescaled and shifted copies $\wt{f}_{k},\wt{g}_{k}$ of the functions $f_{k},g_{k}$ respectively on the interval $J_{k}$. For all $k=1,2,\ldots$, we extend the functions $\wt{f}_{k},\wt{g}_{k}$ on the whole real line by letting them vanish outside of $J_{k}$. Consider the weights $\wt{w},\wt{\sigma}$ on $[0,1)$ given by $\wt{w}(x)=\wt{w}_{k}(x)$, for all $x\in J_{k}$, for all $k=1,2,\ldots$, and similarly for $\wt{\sigma}$. We extend the weights $\wt{w},\wt{\sigma}$ to weights on the whole real line, as in \ref{s: smooth Haar mult extend}, and abusing the notation we denote the extended weights by the same letter. Then, translation and rescaling invariance shows that
\begin{equation}
\label{sequence of functions}
|\La H(\wt{f}_{k}\wt{\sigma}\1_{[0,1)}),\wt{g}_{k}\wt{w}\Ra|\gtrsim_{p}k^{1/p'}\Vert\wt{f}_{k}\Vert{\ci{L^{p}(\wt{\sigma})}}\Vert\wt{g}_{k}\Vert\ci{L^{p'}(\wt{w})}.
\end{equation}
It follows that $\Vert H(\cdot\wt{\sigma}\1_{[0,1)})\Vert{\ci{L^{p}(\wt{\sigma})\rightarrow L^{p}(\wt{w})}}=\infty$. An easy application of the closed graph theorem implies then that there exists $f\in L^{p}(\wt{\sigma})$ with $H(f\wt{\sigma}\1_{[0,1)})\notin L^{p}(\wt{w})$. For instance, one can use the facts that 
\begin{equation*}
\Vert f\wt{\sigma}\1_{[0,1)}\Vert{\ci{L^{1}(\R )}}\leq\wt{\sigma}([0,1))^{1/p'}\Vert f\Vert{\ci{L^{p}(\wt{\sigma})}},\;\forall f\in L^{p}(\wt{\sigma}),
\end{equation*}
and that the linear operator $H:L^{1}(\R )\rightarrow L^{1,\infty}(\R )$ is bounded. 

It remains now to prove that the joint ``fattened'' $A_p$ characteristic of the weights $\wt{w},\wt{\sigma}$, that is the quantity
\begin{equation*}
\sup_{\lambda\in\mathbb{C}_{+}}\left(\int_{\R }\frac{(\text{Im}(\lambda))^{p-1}}{|x-\lambda|^{p}}\wt{w}(x)dx\right)\left(\int_{\R }\frac{(\text{Im}(\lambda))^{p'-1}}{|x-\lambda|^{p'}}\wt{\sigma}(x)dx\right)^{p-1},
\end{equation*}
is finite. As in Subsection \ref{s:smooth Haar mult}, it suffices to prove that
\begin{equation}
\label{muck}
[\wt{w},\wt{\sigma}]\ci{A_{p},\cD([0,1))}\sim_{p}1
\end{equation}
and
\begin{equation}
\label{smooth}
S\ut{sd}_{\wt{w}},~S\ut{sd}_{\wt{\sigma}}\leq 1+\delta.
\end{equation}
Note that translation and rescaling invariance yields immediately that condition \eqref{muck} is fulfilled over $J_{k}$, for all $k=1,2,\ldots$. To check it over intervals that are not contained in any $J_{k}$, it suffices to note that $\La\wt{w}\Ra{\ci{J_{k}}}=1$, for all $k=1,2,\ldots$, and similarly for $\wt{\sigma}$.

Moreover, translation and rescaling invariance yields immediately that condition \eqref{smooth} is fulfilled over $J_{k}$, for all $k=1,2,\ldots$. Thus, it suffices to check that it still holds for adjacent dyadic intervals of equal length whose common endpoint is also an endpoint of some $J_{k}$. To that end, notice that for all $k=1,2,\ldots$, by Remark \ref{r:pres end} we have $\La w_{k}\Ra{\ci{[0,a)}}=\La w_{k}\Ra{\ci{[a,1)}}=w_{k}([0,1))$, for all $a\in(0,1)$, and similarly for $\sigma_{k}$. It follows that $\La\wt{w}\Ra\ci{J}=1$, for all $J\in\mathcal{D}(J_{k})$ sharing an endpoint with $J_{k}$, and similarly for $\wt{\sigma}$, concluding the proof.

\begin{rem}
\label{r: exponent sarason conj}
It is clear that the proof remains valid if we have \eqref{sequence of functions} with $k$ raised to any (fixed) positive exponent. Thus, the proof remains valid if we have \eqref{sequence of examples} with $k$ raised to any (fixed) exponent greater than $1/p$. Therefore, as long as the Muckenhoupt characteristic estimate in the ``large step" examples features an exponent greater than $1/p$, the $L^{p}$ version of Sarason's conjecture cannot be true.
\end{rem}

\section{Appendix}\label{s: appendix}

\subsection{Facts about simply symmetric random walks}\label{s: rand walks}

We give here the proof of Lemma \ref{l: res rand walks}. It can be found in any probability theory textbook (see e.g. \cite{prob}). We do not follow the notation from Section 2.

Let $(\Omega,\mathcal{F},\mathbb{P},\mathbb{F}=(\mathcal{F}_{n})^{\infty}_{n=0})$ be a filtered probability space. Let $(\omega_n)^{\infty}_{n=1}$ be a sequence of random variables on $\Omega$, such that for all $n=1,2,\ldots$ the random variable $\omega_{n}$ is $\mathcal{F}_{n}$-measurable with $\mathbb{P}(\omega_{n}=1)=\mathbb{P}(\omega_{n}=-1)=\frac{1}{2}$, and such that the $\sigma$-algebras $\sigma(\omega_{n},\omega_{n+1},\ldots)$ and $\mathcal{F}_{n-1}$ are independent. Set $S_0=0$ and $S_{n}=\sum_{k=1}^{n}\omega_{k}$, for all $n=1,2,\ldots$. Then, $S=(S_{n})^{\infty}_{n=0}$ is a martingale on $\Omega$. In the statement and the proof of the following lemma, we denote $x\wedge y:=\min(x,y)$.

\begin{lm} \label{l: fin stop}
Let $a,b\in(0,\infty)$. Consider the stopping times $\tau^1,\tau^2,\tau$ on $\Omega$ given by
\begin{equation*}
\tau^1:=\inf\lbrace n\in\mathbb{N}:\;S_{n}=b\rbrace,\qquad\tau^2:=\inf\lbrace n\in\mathbb{N}:\;S_{n}=-a\rbrace,\qquad\tau:=\tau^1\wedge\tau^2.
\end{equation*}

(a) There holds $\tau^1,\tau^2<\infty$ a.e. on $\Omega$.

(b)There holds $\mathbb{P}(\tau=\tau^1)=\frac{a}{a+b}$ and $\mathbb{P}(\tau=\tau^2)=\frac{b}{a+b}$.
\end{lm}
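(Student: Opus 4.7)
The plan is to prove part (b) first via the classical gambler's-ruin optional-stopping computation, and then to deduce the finiteness claims of part (a) as an easy corollary by letting one of the absorbing barriers recede to infinity.

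First I would verify $\tau<\infty$ almost surely. Under the implicit assumption that $a,b$ are positive integers (which is required for the stopping times to be finite at all, since $S_n$ takes only integer values, and is satisfied in the intended application of Lemma \ref{l: res rand walks} to Rademacher sums), a standard Borel--Cantelli argument on independent blocks does the job: partition $\mathbb{N}$ into consecutive blocks of length $a+b$; on each block the probability that all $\omega_k$ equal $+1$ is $2^{-(a+b)}>0$, in which case $S_n$ is forced to cross the level $b$ within that block no matter where it started in $[-a,b-1]$. Since the increments in disjoint blocks are independent and $\sum_k 2^{-(a+b)}=\infty$, the event $\{\tau=\infty\}$ has probability zero.

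With $\tau<\infty$ a.s. established, the stopped process $(S_{n\wedge\tau})_{n\geq 0}$ is a martingale uniformly bounded by $\max(a,b)$, and $S_{n\wedge\tau}\to S_\tau$ almost surely as $n\to\infty$. Dominated convergence then yields
$$0=\mathbb{E}[S_0]=\lim_{n\to\infty}\mathbb{E}[S_{n\wedge\tau}]=\mathbb{E}[S_\tau]=b\,\mathbb{P}(\tau=\tau^1)-a\,\mathbb{P}(\tau=\tau^2),$$
which, combined with $\mathbb{P}(\tau=\tau^1)+\mathbb{P}(\tau=\tau^2)=1$, gives the two formulas of part (b).

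Finally, to obtain part (a), I would rerun the same argument with the lower barrier pushed down: for each integer $c\geq 1$, set $\tau_c:=\inf\{n\in\mathbb{N}:S_n\in\{b,-(a+c)\}\}$. Part (b) applied to these barriers yields
$$\mathbb{P}(\tau^1<\infty)\geq \mathbb{P}(S_{\tau_c}=b)=\frac{a+c}{a+b+c}\xrightarrow[c\to\infty]{}1,$$
so $\tau^1<\infty$ a.s., and the symmetric argument (interchange the roles of $a$ and $b$, or reflect the walk) gives $\tau^2<\infty$ a.s. The only mildly delicate points are the dominated-convergence justification for optional stopping and the implicit integrality of $a,b$; otherwise this is an entirely routine textbook computation, so I do not expect any real obstacle.
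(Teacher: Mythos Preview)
Your argument is correct, and your treatment of part~(b) via optional stopping on the bounded martingale $(S_{n\wedge\tau})$ coincides with the paper's. Where you differ is in part~(a): the paper proves $\tau^1<\infty$ a.s.\ directly by introducing the exponential martingale $M_n=e^{\theta S_n}/(\cosh\theta)^n$, stopping it at $\tau^1$, and letting $\theta\to 0^+$ to extract $\mathbb{P}(\tau^1<\infty)=1$ from the identity $\mathbb{E}\bigl[(\cosh\theta)^{-\tau^1}\1_{\{\tau^1<\infty\}}\bigr]=e^{-\theta b}$. Your route---Borel--Cantelli on independent blocks to get $\tau<\infty$, then a barrier-receding limit $c\to\infty$ to upgrade to $\tau^1<\infty$---is more elementary and avoids the auxiliary martingale entirely; the paper's approach, on the other hand, yields the Laplace-transform identity for $\tau^1$ as a byproduct (though it is not used here). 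You also correctly flag the implicit integrality of $a,b$, which the paper's statement glosses over but which is indeed satisfied in the only application (Lemma~\ref{l: res rand walks} with $a=b=d$).
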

\begin{proof}
Let $\theta\in(0,\infty)$ be arbitrary. Consider the martingale $M$ given by
\begin{equation*}
M_{n}:=\frac{e^{\theta S_{n}}}{(\cosh\theta)^{n}},\qquad n=0,1,2,\ldots
\end{equation*}
(note that $0<M_{n}\leq\frac{e^{n\theta}}{(\cosh\theta)^{n}}$, for all $n=0,1,2,\ldots$). By optional sampling theorem, we have that the stopped process $M^{\tau^1}:=(M_{n\wedge\tau^1})^{\infty}_{n=0}$ is also a martingale. We notice that
\begin{equation*}
0<M_{n\wedge\tau^1}=\frac{e^{\theta S_{n\wedge\tau^1}}}{(\cosh\theta)^{n\wedge\tau^1}}\leq e^{\theta b},\qquad \forall n=0,1,2,\ldots,
\end{equation*}
thus $M^{\tau^1}$ is uniformly bounded. Therefore, by basic convergence facts for martingales it follows that $M^{\tau^1}$ is uniformly integrable, therefore there exists $X\in L^1(\Omega)$ such that $M^{\tau^1}_{n}\rightarrow X$ a.e. pointwise on $\Omega$ as $n\rightarrow\infty$. It is clear that $\lim_{n\rightarrow\infty}M^{\tau^1}_{n}(x)=\frac{e^{\theta S_{\tau^1(x)}(x)}}{(\cosh\theta)^{\tau^1(x)}}$, for all $x\in\Omega$ with $\tau^1(x)<\infty$, and that $M^{\tau^1}_{n}(x)=\frac{e^{\theta S_{n}(x)}}{(\cosh\theta)^{n}}$, for all $n=0,1,2,\ldots$, for all $x\in\Omega$ with $\tau^1(x)=\infty$. Then, for all $x\in\Omega$ with $\tau^1(x)=\infty$, we have
\begin{equation*}
M^{\tau^1}_{n}(x)=\frac{e^{\theta S_{n}(x)}}{(\cosh\theta)^{n}}\leq\frac{e^{\theta b}}{(\cosh\theta)^n},\qquad\forall n=0,1,2,\ldots,
\end{equation*}
therefore since $\cosh\theta>1$ we obtain $X(x)=0$. It follows that
\begin{equation*}
\mathbb{E}\left[\frac{e^{\theta S_{\tau^1}}}{(\cosh\theta)^{\tau^1}}\1_{\lbrace\tau^1<\infty\rbrace}\right]=\mathbb{E}[X]=\mathbb{E}[M_0]=1,
\end{equation*}
therefore since $S_{\tau^1}=b$ on $\lbrace\tau^1<\infty\rbrace$ we obtain
\begin{equation*}
\mathbb{E}[(\cosh\theta)^{-n}\1_{\lbrace\tau^1<\infty\rbrace}]=e^{-\theta b}.
\end{equation*}
Since $\cosh\theta>1$, for all $\theta>0$, taking the limit as $\theta\rightarrow 0^{+}$ and applying the Dominated Convergence Theorem we obtain $\mathbb{P}(\tau^1<\infty)=1$. Similarly $\tau^2<\infty$ a.e. on $\Omega$.

(b) Set $\mathbb{P}(\tau=\tau^1)=p_1$ and $\mathbb{P}(\tau=\tau^2)=p_2$. Then, since $\tau^1,\tau^2<\infty$ a.e. on $\Omega$ we obtain $\tau^1\neq \tau^2$ a.e. on $\Omega$, therefore $p_1+p_2=1$. We also have $p_1=\mathbb{P}(S_{\tau}=b)$ and $p_2=\mathbb{P}(S_{\tau}=-a)$. An application of the optional sampling theorem yields $\mathbb{E}[S_{\tau}]=0$, i.e. $bp_1-ap_2=0$. Therefore $p_1=\frac{a}{a+b}$ and $p_1=\frac{b}{a+b}$.
\end{proof}

\subsection{Stopping on the lower hyperbola}\label{s: stop lower hyperb}

We give here the proof of Lemma \ref{l: stop lower hyperb}.

Let $p\in(1,\infty)$. Let $x,y>0$ be arbitrary, such that $xy^{p-1}\geq1$. We claim that there exist $a_1,b_1,a_2,b_2>0$ with $a_2\leq x\leq a_1$ and $b_1\leq y\leq b_2$, such that $a_1b_1^{p-1}=a_2b_2^{p-1}=1$ and $x=\frac{a_1+a_2}{2}$, $y=\frac{b_1+b_2}{2}$.

Indeed, consider the function $f:(0,2y)\rightarrow(0,\infty)$ given by $f(b)=\frac{1}{b^{p-1}}+\frac{1}{(2y-b)^{p-1}}$, for all $b\in(0,2y)$. We have $\lim_{b\rightarrow0^{+}}f(b)=\infty$ and $f(y)=\frac{2}{y^{p-1}}\leq 2x$. Therefore, an application of the Intermediate Value Theorem yields that there exists $b_1\in(0,y]$ with $f(b_1)=2x$. Then, we take $b_2=2y-b_1$ and $a_1=b_1^{1-p}$, $a_2=b_2^{1-p}$.

\subsection{Getting a little above the upper hyperbola}\label{s: upper hyperb}

We give here the proof of Lemma \ref{l: upper hyperb}.

Let $p\in(1,\infty)$. Let $x_1,y_1,x_2,y_2>0$ and $A>0$, such that
\begin{equation*}
x_1y_1^{p-1},~\left(\frac{x_1+x_2}{2}\right)\left(\frac{y_1+y_2}{2}\right)^{p-1},~x_2y_2^{p-1}\leq A.
\end{equation*}
We will show that
\begin{equation*}
(ax_2+(1-a)x_1)(ay_2+(1-a)y_1)^{p-1}\leq 2^{p}A,\qquad\forall a\in[0,1].
\end{equation*}
If $x_1\leq x_2$ and $y_1\leq y_2$, or $x_1\geq x_2$ and $y_1\geq y_2$, then we have nothing to show. Assume now that either  $x_2> x_1$ and $y_1< y_2$, or $x_1> x_2$ and $y_2> y_1$. Replacing if necessary $A$ by $A^{p'-1}$, $p$ by $p'$, and $x_{i}$ by $y_{i}$ for $i=1,2$, we can without loss of generality assume that there holds $x_1> x_2$ and $y_2> y_1$. Set
\begin{equation*}
x=\frac{x_1-x_2}{x_1+x_2},\;y=\frac{y_2-y_1}{y_2+y_1},\;B=\frac{A}{\left(\frac{x_1+x_2}{2}\right)\left(\frac{y_1+y_2}{2}\right)^{p-1}}.
\end{equation*}
Then, we have $x,y\in(0,1)$, $B\geq 1$ and $(1+x)(1-y)^{p-1},~(1-x)(1+y)^{p-1}\leq B$, and we want to show that
\begin{equation*}
\sup_{s\in[-1,1]}(1-sx)(1+sy)^{p-1}\leq 2^{p}B.
\end{equation*}
This is clear, because $B\geq 1$ and $(1-sx)(1+sy)^{p-1}\leq 2\cdot 2^{p-1}=2^{p}$, for all $s\in[-1,1]$, concluding the proof.

\begin{rem}
\label{r: nonimprov upper hyperb}
Although the above estimate is crude, it can be seen that in general one cannot obtain an estimate better that $2^{p}/{p}$ as $p\rightarrow\infty$.
\end{rem}

\subsection{A counterexample}\label{s: counter two-weight Muck}

We show here that finiteness of joint Muckenhoupt $A_{p}$ characteristic does not quarantee two-weight estimates for the Hilbert transform $H$. We will use a modified version of F. Nazarov's example in \cite[p.~1]{Nazarov}. Let $p\in(1,\infty)$. Consider the weights $w,\sigma$ on $\R $ given by
\begin{equation*}
w(t):=|t|^{p-1},\;\;
\sigma(t):=
\begin{cases}
|t|^{-p/(p-1)},\text{ if }|t|>1\\
1,\text{ if }|t|\leq 1
\end{cases}
,\qquad\forall t\in\R .
\end{equation*}
We show first that $[w,\sigma]\ci{A_{p}}<\infty$. It is clear that $\La w\Ra_{[a,b)}\La \sigma\Ra_{[a,b)}^{p-1}\lesssim_{p} 1$, for all $a,b\in\R $ with $-2\leq a<b\leq 2$. For all $a\in(1,\infty)$, we have
\begin{align*}
\La w\Ra_{[0,a)}\La\sigma\Ra_{[0,a)}^{p-1}\sim_{p} a^{p-1}\left(\frac{1+1-a^{-1/(p-1)}}{a}\right)^{p-1}\leq 2^{p-1}.
\end{align*}
Then, for all $a,b\in[0,\infty)$ with $0<a\leq\frac{b}{2}$, we have $b-a\geq\frac{b}{2}$, therefore
\begin{equation*}
\La w\Ra_{[a,b)}\La\sigma\Ra_{[a,b)}^{p-1}\lesssim_{p}\La w\Ra_{[0,b)}\La\sigma\Ra_{[0,b)}^{p-1}\lesssim_{p}1.
\end{equation*}
Moreover, for all $a,b\in[0,\infty)$ with $0<1<\frac{b}{2}<a<b$, we have $w(t)\sim_{p} a^{p-1}$, for all $t\in[a,b)$ and $\sigma(t)\sim_{p} a^{-p/(p-1)}$, for all $t\in[a,b)$, therefore
\begin{equation*}
\La w\Ra_{[a,b)}\La\sigma\Ra_{[a,b)}^{p-1}\sim_{p} a^{p-1}(a^{-p/(p-1)})^{p-1}=a^{-1}<1.
\end{equation*}
Thus $\La w\Ra_{[a,b)}\La\sigma\Ra_{[a,b)}^{p-1}\lesssim_{p}1$, for all $a,b\in[0,\infty)$ with $a<b$. This implies $\La w\Ra_{[-b,-a)}\La\sigma\Ra_{[-b,-a)}^{p-1}\lesssim_{p}1$, for all $a,b\in[0,\infty)$ with $a<b$. Moreover, for all $a,b\in(0,\infty)$, setting $c=\max(a,b)$ and noticing that $b+a\geq c$ we obtain
\begin{align*}
\La w\Ra_{[-a,b)}\La\sigma\Ra_{[-a,b)}^{p-1}\lesssim_{p}\La w\Ra_{[-c,c)}\La\sigma\Ra_{[-c,c)}^{p-1}=\La w\Ra_{[0,c)}\La\sigma\Ra_{[0,c)}^{p-1}\lesssim_{p}1,
\end{align*}
yielding the desired result.

Consider now the function $f:=\1\ci{[0,1)}$. We have $\Vert f\Vert \ci{L^{p}(\sigma)}=1$, just as in \cite[p.~1]{Nazarov}. Direct computation shows then that $H(f\sigma)(t)=H(\1_{[0,1)})(t)=\frac{1}{\pi}\ln\left(\frac{t}{t-1}\right)$ for almost every $t\in(1,\infty)$, so since $\lim_{t\rightarrow\infty}t\ln\left(\frac{t}{t-1}\right)=1$ we deduce $H(f\sigma)(t)\sim\frac{1}{t}$ for almost every $t\in(2,\infty)$, thus $|H(f\sigma)(t)|^{p}w(t)\sim_{p}\frac{1}{t}$ for almost every $t\in(2,\infty)$, just as in \cite[p.~1]{Nazarov}, thus $H(f\sigma)\notin L^{p}(w)$.

\subsection{Proofs of F. Nazarov's lemmas}\label{s: naz lemma proof}

We give here the proofs of F. Nazarov's lemmas from \cite{Nazarov}.

\begin{proof}[Proof of Lemma \ref{l:naz1}]
We follow the proof in \cite[\S 6]{Nazarov}. Let $\e>0$ be arbitrary. We have $\lim_{\delta\rightarrow0^{+}}(1+\delta)^{1/\sqrt{\delta}}=\lim_{\delta\rightarrow0^{+}}(1+\delta)=1$, therefore there exists $\delta\in\left(0,\frac{1}{4}\right)$ such that
\begin{equation*}
(1-2\sqrt{\delta})(1+\delta)^{-2/\sqrt{\delta}}>(1+\e)^{-1/2},\;\;\;(1+2\sqrt{\delta})(1+\delta)^{2+2/\sqrt{\delta}}<(1+\e)^{1/2}.
\end{equation*}
Let now $w$ be a weight on $\R$ with $S\ut{sd}_{w}\leq1+\delta$.

\textbf{Claim.} For all intervals $I$ in $\R$, for all $J\in\cD$ with $|J|\leq\sqrt{\delta}|I|\leq 2|J|$ and containing one of the endpoints of $I$, there holds $\langle w\rangle\ci{J}/\langle w\rangle\ci{I}, ~\langle w\rangle\ci{I}/\langle w\rangle\ci{J}\leq (1+\e)^{1/2}$.

Assume the claim for the moment. Let $I$ be an arbitrary interval in $\R$. There exists $J\in\cD$, such that $2|J|\leq\sqrt{\delta}|I|\leq 4|J|$ and $J$ contains the center of $I$. Then, by the claim, applied for $I_{-},J$ and $I_{+},J$, we have
\begin{equation*}
\frac{\La\rho\Ra\ci{J}}{\La\rho\Ra{\ci{I_{-}}}},\frac{\La\rho\Ra\ci{J}}{\La\rho\Ra{\ci{I_{+}}}}\leq(1+\e)^{1/2},\;\;\;\frac{\La\rho\Ra{\ci{I_{+}}}}{\La\rho\Ra\ci{J}},\frac{\La\rho\Ra{\ci{I_{-}}}}{\La\rho\Ra\ci{J}}\leq(1+\e)^{1/2},
\end{equation*}
therefore $\La\rho\Ra{\ci{I_{+}}}/\La\rho\Ra{\ci{I_{-}}},~\La\rho\Ra{\ci{I_{-}}}/\La\rho\Ra{\ci{I_{+}}}\leq 1+\e$, yielding the desired result.

We now prove the claim. Let $I$ be an interval in $\R $, and let $J\in\cD$ with $|J|\leq\sqrt{\delta}|I|\leq2|J|$, containing one of the endpoints of $I$.

Set $J_{\ast}=\lbrace K\in\cD:\;|K|=|J|,\;K\subseteq I\rbrace$ and $I_{\ast}=\bigcup J_{\ast}$. Clearly $J_{\ast}\neq\emptyset$, since $|J|<\frac{1}{2}|I|$. It is clear that 
\begin{equation*}
\# J_{\ast}\leq\frac{|I|}{|J|}\leq\frac{2}{\sqrt{\delta}}.
\end{equation*}
For all $K\in J_{\ast}$, there exist $l\in\lbrace1,\ldots,\# J_{\ast}\rbrace$ and $J_{1},\ldots,J_{l+1}\in\cD$ of length equal to $|J|$, such that $J_{1}=K$, $J_{l+1}=J$ and $J_{i},J_{i+1}$ are adjacent or coincide, for all $i=1,\ldots,l$, therefore
\begin{equation*}
\frac{\La w\Ra\ci{K}}{\La w\Ra\ci{J}}=\prod_{i=1}^{l}\frac{\La w\Ra\ci{J_{i}}}{\La w\Ra\ci{J_{i+1}}}\geq(1+\delta)^{-l}\geq(1+\delta)^{-2/\sqrt{\delta}},
\end{equation*}
thus
\begin{equation*}
\frac{\La w\Ra\ci{I_{\ast}}}{\La w\Ra\ci{J}}=\frac{|J|}{|I_{\ast}|}\sum_{K\in J_{\ast}}\frac{\La w\Ra\ci{K}}{\La w\Ra\ci{J}}\geq\frac{|J|}{|I_{\ast}|}(\# J_{\ast})(1+\delta)^{-2/\sqrt{\delta}}=(1+\delta)^{-2/\sqrt{\delta}}.
\end{equation*}
Note also that $|I_{\ast}|\geq |I|-2|J|\geq(1-2\sqrt{\delta})|I|$, therefore
\begin{align*}
\frac{\La w\Ra\ci{I}}{\La w\Ra\ci{J}}\geq\frac{|I_{\ast}|}{|I|}\frac{\La w\Ra\ci{I_{\ast}}}{\La w\Ra\ci{J}}\geq(1-2\sqrt{\delta})(1+\delta)^{-2/\sqrt{\delta}}
\geq(1+\e)^{-1/2}.
\end{align*}
Set also $J^{\ast}=\lbrace K\in\cD:\;|K|=|J|,\;K\cap I\neq\emptyset\rbrace$ and $I^{\ast}=\bigcup J^{\ast}$. It is clear that
\begin{equation*}
\# J^{\ast}\leq\frac{|I|}{|J|}+2\leq\frac{2}{\sqrt{\delta}}+2.
\end{equation*}
Then, similarly to previously we have
\begin{equation*}
\frac{\La w\Ra\ci{I^{\ast}}}{\La w\Ra\ci{J}}\leq (1+\delta)^{2+2/\sqrt{\delta}}.
\end{equation*}
Note also that $|I_{\ast}|\leq |I|+2|J|\leq (1+2\sqrt{\delta})|I|$, therefore
\begin{align*}
\frac{\La w\Ra\ci{I}}{\La w\Ra\ci{J}}\leq\frac{|I^{\ast}|}{|I|}\frac{\La w\Ra\ci{I^{\ast}}}{\La w\Ra\ci{J}}\leq(1+2\sqrt{\delta})(1+\delta)^{2+2/\sqrt{\delta}}
\leq(1+\e)^{1/2},
\end{align*}
concluding the proof.
\end{proof}

\begin{proof}[Proof of Lemma \ref{l:naz2}] 
We follow the proof in \cite[\S 11]{Nazarov}. Set $\e=(25/16)^{1/p}-1$. Choose $\delta\in\left(0,\frac{1}{4}\right)$ as in the proof of Lemma \ref{l:naz1} for this $\e$. Let $\rho$ be a weight on $\R $ with $[\rho]\ci{A_{p},\cD}<\infty$ and $S_{\rho}^{\text{sd}},S_{\tau}^{\text{sd}}\leq 1+\delta$, where $\tau=\rho^{-1/(p-1)}$. Let $I$ be an arbitrary interval in $\R $. By the proof of  Lemma \ref{l:naz1} we have that there exists $J\in\cD$ such that
\begin{equation*}
\langle\rho\Ra\ci{I}\leq(1+\e)^{1/2}\langle\rho\Ra\ci{J},\;\;\;
\La\tau\Ra\ci{I}\leq(1+\e)^{1/2}\La\tau\Ra\ci{J},
\end{equation*}
therefore
\begin{equation*}
\La\rho\Ra\ci{I}\La\tau\Ra\ci{I}^{p-1}\leq(1+\e)^{p/2}\La\rho\Ra\ci{J}\La\tau\Ra\ci{J}^{p-1}\leq\frac{5}{4}[\rho]\ci{A_{p},\cD}.
\end{equation*}
It follows that $[\rho]\ci{A_{p},\cD}\leq [\rho]\ci{A_{p}}\leq(5/4)[\rho]\ci{A_{p},\cD}$, concluding the proof.
\end{proof}

\end{document}